\documentclass[11pt,a4paper]{article}
\usepackage{geometry}
%

\usepackage{graphicx}
\usepackage{bm,array}
\usepackage{comment}
\usepackage{caption}
\usepackage{subcaption}
 \usepackage{makecell}
 \usepackage{multirow}
 \usepackage{float}

\usepackage[normalem]{ulem}

\usepackage[pdfpagemode={UseOutlines},bookmarks=true,bookmarksopen=true,
bookmarksopenlevel=0,bookmarksnumbered=true,hypertexnames=false,
colorlinks,linkcolor={blue},citecolor={blue},urlcolor={red},
pdfstartview={FitV},unicode,breaklinks=true]{hyperref}
\hypersetup{urlcolor=blue, colorlinks=true}

\usepackage{setspace}
\onehalfspacing
\setlength{\parindent}{0pt}
\setlength{\parskip}{2.0ex plus0.5ex minus0.2ex}
\usepackage{vmargin}
\setmarginsrb{ 1.0in}  
{ 0.6in}  
{ 1.0in}  
{ 0.8in}  
{  20pt}  
{0.25in}  
{9pt}  
{ 0.3in}  

\usepackage[round, sort,comma,authoryear]{natbib}


\usepackage{authblk}

\usepackage{amsmath}
\usepackage{amsfonts}
\usepackage{amssymb}

\usepackage{multirow}
\usepackage{hhline}
\usepackage{footnote}
\usepackage[ruled,vlined]{algorithm2e}
\usepackage{algorithmic}
\newcommand{\comments}[1]{{\color{blue}\textit{$\#$ #1}}}

\newcommand{\transpose}{{\mbox{\tiny T}}}

\newcommand{\cV}{{\mathcal{V}}}
\newcommand{\cD}{{\mathcal{D}}}
\newcommand{\cK}{{\mathcal{K}}}

\newcommand{\cL}{{\mathcal{L}}}

\newcommand{\cO}{{\mathcal{O}}}

\newcommand{\bbe}{{\textbf{e}}}

\newcommand{\bB}{\textbf{B}}

\newcommand{\bV}{\textbf{V}}
\newcommand{\bY}{\textbf{Y}}
\newcommand{\bx}{\textbf{x}}

\newcommand{\bc}{\textbf{c}}

\newcommand{\bA}{\textbf{A}}

\newcommand{\ba}{\textbf{a}}

\newcommand{\bv}{\textbf{v}}

\newcommand{\bld}{{\pmb{\lambda}}}

\newcommand{\btau}{{\pmb{\tau}}}
\newcommand{\btheta}{{\pmb{\theta}}}
\newcommand{\dg}{\text{diag}}

\newcommand{\bbR}{\mathbb{R}}

\newtheorem{theorem}{Theorem}

\newtheorem{corollary}{Corollary}

\newtheorem{lemma}{Lemma}

\newtheorem{proposition}{Proposition}
\newtheorem{remark}{Remark}

\newenvironment{proof}[1][Proof]{\noindent\textbf{#1.} }{\ \rule{0.5em}{0.5em}}

\newcommand{\WC}{\textsc{\tiny WC}}
\newcommand{\SA}{\textsc{\tiny SA}}

\usepackage{mathtools}

\newif\ifnotes\notestrue
%

%

\def\mtien#1{{\color{black}{#1}}}

\def\htien#1{}

\newcommand{\RO}{\textsc{\tiny RO}}
\newcommand{\DET}{\textsc{\tiny DET1}}
\newcommand{\MDET}{\textsc{\tiny DET2}}

\begin{document}
	
\newcolumntype{C}{>{\centering\arraybackslash}p{4em}}

\title{\textbf{Robust Maximum Capture Facility Location under Random Utility Maximization Models}}
\author[1]{Tien Thanh Dam}
\author[2]{Thuy Anh Ta}
\author[3]{Tien Mai}
\affil[1]{\it\small 
ORLab, Faculty of Computer Science, Phenikaa University, Yen Nghia, Ha Dong, Hanoi, VietNam, anh.tathuy@phenikaa-uni.edu.vn}
\affil[2]{\it\small 
ORLab, Faculty of Computer Science, Phenikaa University, Yen Nghia, Ha Dong, Hanoi, VietNam,
thanh.damtien@phenikaa-uni.edu.vn}
\affil[3]{\it\small
School of Computing and Information Systems, Singapore Management University, 80 Stamford Rd, Singapore 178902, atmai@smu.edu.sg
	}

\maketitle

\pagebreak
\begin{center}
	\linespread{1.5}
	\LARGE
	\bf
	{Robust Maximum Capture Facility Location under Random Utility Maximization Models}
\end{center}

\begin{abstract}
We study a robust version of the maximum capture facility location problem in a competitive market, assuming that each customer chooses among all available facilities according to a random utility maximization (RUM) model.  We employ the generalized extreme value (GEV) family of models and assume that the  parameters of the RUM model are not given exactly but lie in convex uncertainty sets. The problem is to locate new facilities to maximize  the worst-case captured user demand. We show that, interestingly,  our robust model preserves the monotonicity and submodularity from its deterministic counterpart, implying that  a simple greedy heuristic can guarantee a $(1-1/e)$ approximation solution. We further show the concavity of the objective function under the classical multinomial logit (MNL) model, suggesting that an outer-approximation algorithm can be used to solve the robust model under MNL to optimality. We conduct experiments comparing our robust method to other deterministic and sampling approaches, using instances from different discrete choice models. Our results clearly demonstrate the advantages of our {robust} model in protecting the decision-maker from worst-case scenarios.         
\end{abstract}

{\bf Keywords:}  
Facilities planning and design, maximum capture, random utility maximization, robust optimization, local search, outer-approximation 


%


\section{Introduction}
Facility location is an active research area in operations research and has been attracting researchers for decades. Facility location problems  play  important roles in many decision-making tasks such as installation  of new retail or service facilities in a market, launching new products to the market, or developing optimal customer segmentation policies. In facility location, a firm  aims at selecting a set of locations  to locate their facilities to maximize a profit or minimize a cost function.  
In this context, to make good decisions, one may need to build a good model to predict customers' behavior with respect to each possible facility location decision. The random utility maximization (RUM) discrete choice framework \citep{McFa78} has become useful in the context due to its flexibility in capturing human behavior. 
To the best of our knowledge, existing works on facility location under RUM all assume that the parameters of the RUM model are known with certainty and ignore any uncertainty associated with  the estimates, with a tacit understanding that the parameters have to be estimated in practice.
 Such an estimation can cause errors and the decision-maker needs to cope with the fact that the estimates of the choice parameters can significantly deviate from their true values. Ignoring such estimation errors would lead to bad decisions, as shown in several studies in the robust optimization literature \citep[see][for instance]{Bertsimas2011theoryRO}. In this paper, we address this uncertainty issue by studying a robust version of the facility location problem under RUM.   

We consider the problem of how to locate new facilities in a competitive market such that the captured demand of users is maximized, assuming that each individual chooses among all available facilities according to a RUM model. This problem is  called as the maximum capture problem (MCP) \citep{BenaHans02}. We formulate and solve the MCP under uncertainty in a robust manner. That is, we assume that customers' behavior is driven  by the well-known generalized extreme value  (GEV) family of models, but the parameters of the RUM model cannot be determined with certainty and belong to some uncertainty sets. These uncertainty sets can represent a partial knowledge of the decision-makers about the RUM model and can be inferred from data. The goal here is to maximize the worst-case expected captured customer demand when the RUM parameters vary in the uncertainty sets. 
We will study theoretical properties and develop algorithms for the robust MCP under any GEV model  and, in particular, the robust MCP under the popular multinomial logit (MNL)  \citep{Trai03}.  

Before presenting our contributions in detail, we note that, when mentioning the ``GEV'' model, we refer to any RUM (or discrete choice) model in the GEV family. This family covers most  of the discrete choice  (or RUM)  models in the literature \citep{Trai03}. 

\noindent

\textbf{Our contributions:}
We study a robust version of the MCP under a GEV model, assuming that the parameters of the GEV model are not known with certainty but can take any values in some uncertainty sets, the uncertainty sets are customer-wise independent, and 
the objective is to maximize the worst-case expected captured customer demand. We will show that, under our uncertainty settings, the inner minimization problem can be solved by convex optimization. We then leverage the properties of the GEV family to show that the worst-case objective function is monotonic and submodular for any GEV model, noting that the monotonicity and submodularity 
have been shown for the deterministic MCP \citep{Dam2021submodularity} and in this work, we show that the robust model preserves both properties. Here, it is important to note that a robust submodular
maximization problem is generally inapproximable, i.e. there is no
polynomial-time algorithm that can guarantee a positive
fraction of the optimal value,  unless {\sf P = NP} \citep{krause2008robust}.
Our results, however, show that, in the context of the MCP, a simple  polynomial-time greedy algorithm can achieve $(1-1/e)$ approximation solutions.  

The monotonicity and submodularity of the robust problem imply that  the robust MCP, under a cardinality constraint,  always admits a $(1-1/e)$ approximation algorithm \citep{Nemhauser1978analysis}. That is, we can simply start from an empty set and iteratively select locations, one at a time, taking at each step the location that increases the worst-case objective function the most, until the maximum capacity is reached. A solution from this simple procedure will yield an objective value being at least $(1-1/e)$ times the optimal value of the robust problem. We then further adapt the local search procedure proposed by \cite{Dam2021submodularity} to efficiently solve the robust MCP under GEV models.  Our results generally hold for any RUM model in the GEV family,  and under any convex uncertainty sets.  We further consider the robust MCP under the MNL model and show that, under the assumption that the uncertainty sets are independent over customer zones, the robustness  preserves the concavity of the relaxation of the worst-case objective function, implying that an outer-approximation algorithm can be used to exactly solve the MCP under MNL. A multicut outer-approximation algorithm  is then presented for this purpose. 
We finally conduct experiments based on the MNL and nested logit (two  most popular RUM models) to demonstrate  the advantages of our robust model in protecting decision-makers from worst-case scenarios, as compared to other deterministic and sampling-based baselines.

\textbf{Literature review:}
The GEV family \citep{McFa78,Trai03} covers many popular RUM models in
the demand modeling and operations research literature. In this family, the simplest and most popular member is the MNL \citep{McFa78}
and it is well-known that the MNL model retains the independence from irrelevant alternatives (IIA) property, which is often regarded as a limitation of the MNL model. The literature has seen several other GEV models that  relax this property and provide 
flexibility in modeling the correlation between choice alternatives. Some examples are the nested logit \citep{BenA73,BenALerm85}, the cross-nested logit \citep{VovsBekh98}, and network GEV \citep{Daly2006,MaiFreFosBas15_DynMEV} models. The cross-nested and network GEV models are considered as being fully flexible as they can approximate any RUM model \citep{Fosgerau2013}. It is worth noting that in the context of descriptive representation (i.e., modeling human behavior) the MNL and nested logit models are found the most empirically applicable among existing RUM models, but in prescriptive optimization (i.e., decision-making), apart from the MNL, the use of other GEV models is limited due to their complicated structures. Besides the GEV family, we note that the mixed
logit model (MMNL) \citep{McFaTrai00} is  popular due to its flexibility in capturing utility
correlations. In the context of the MCP, the use of the MMNL however yields the same problem structure as the one from the MNL model \citep{MaiLodi2020_OA,Dam2021submodularity}.

Besides the GEV family and MMNL model, the literature has seen other  discrete choice models that would be useful for people demand modeling. For example,
\cite{MarkovC_blanchet2016markov} propose  a \textit{Markov-chain choice model} under which the substitution from one
product to another is modeled as a state transition in a Markov chain. \cite{gallego2019threshold} propose a \textit{threshold utility model} where consumers buy any product whose net utility
exceeds a non-negative, product-specific threshold. \cite{farias2013nonparametric} propose a \textit{non-parametric  choice model} where the choice probabilities are modelled based on  a distribution over all permutations of the choice alternative preferences. 
\citep{MDC_mishra2014theoretical} develop the \textit{marginal distribution choice} (MDC) models based on the assumption that the distribution of  the random utilities are not given exactly but belong to an ambiguity set with marginal information. With techniques from distributionally robust optimization, the estimation of  MDC models in decision-making can be handled by convex optimization \citep{DCM_yan2022representative}.
\mtien{The difference between the MDC  and our robust models lies in the  sources of uncertainty each model captures. More precisely, under the RUM principle,  the utility of an alternative $j$ is modelled as $u_j = v_j+\xi_j$, where $v_j$ is deterministic and $\xi_j$ is assumed to follow a given distribution. While our model captures uncertainties associated with the deterministic term $v_j$, the MDC model assumes that the distribution of $\xi_j$ is ambiguous. In fact, our robust model is  based on the well-studied GEV family while the MDC is not, and our robust model is capable of naturally capturing uncertainties caused by, for instance, estimation errors, limited data, or lack of information about facilities and customers when specifying the utility function. The MDC model, to the best of our knowledge, is limited in this aspect. Moreover, while the application of GEV models in choice-based optimization has had a lot of success, the application of the MDC model has just received attention recently and is still limited. }


In the context of facility location under RUM, there are a number of works making use of the MNL model to capture customers' demand. For example, \cite{BenaHans02} formulate the first MCP under MNL and propose  methods based on mixed-integer linear programming (MILP) and variable
neighborhood search (VNS). Afterward, some
alternative MILP models have been proposed  by \cite{Zhang2012} and \cite{Haase2009}. \cite{Haase2013} then provide a comparison of existing MILP models and conclude that the MILP from \cite{Haase2009} gives the best performance. \cite{Freire2015} strengthen the MILP reformulation of \citep{Haase2009} using  a branch-and-bound algorithm with some tight
inequalities. \cite{Ljubic2018outer} propose a branch-and-cut method combining two types of cutting planes, namely, outer-approximation and submodular cuts, and \cite{MaiLodi2020_OA} propose a multicut outer-approximation algorithm to efficiently solve large instances. All the above papers employ the MNL or MMNL models, leveraging the linear fractional structures of the objective functions to develop solution algorithms. Recently, \cite{Dam2021submodularity} make the first effort to bring general GEV models into the MCP. In this work, we show that the objective function of the MCP under GEV is monotonic submodular, leading to the development of an efficient local search algorithm with a performance guarantee. 

Our work  belongs to the general literature of robust facility location where the problem is  to open new (or reopen available) facilities under uncertainty. For example,  \cite{averbakh1997minimax}
considers a minimax-regret formulation of the
weighted $p$-center problem and shows that 
the problem can be solved by solving a sequence of  deterministic
$p$-center problems. Subsequently, \cite{averbakh2000algorithms} 
study a minimax-regret $1$-center
problem with uncertain node weights and edge lengths and \cite{averbakh2000minmax} consider a minimax-regret $1$-median problem with
interval uncertainty of the nodal demands. 
Some distributionally robust facility location models have  been studied, for instance, \cite{lu2015reliable} study a distributionally robust reliable facility location problem by optimizing over worst-case distributions based on a given distribution of random facility disruptions, \cite{liu2019distributionally} study a distributionally robust model for optimally locating emergency medical service stations under demand uncertainty, and \cite{BeseAlge11} study   a facility location problem where the distribution of customer demand is dependent on location decisions.
We refer the reader to \cite{snyder2006facility_review} for a review. 
Our work differs from the above papers as we employ RUM models, thus customers' behavior is captured by a probabilistic discrete choice model. 

Our work  relates to the rich literature of robust optimization \citep[e.g.][]{ei1997robust,ben1998robust,ben1999robust,bertsimas2004robust} and distributionally robust optimization \citep{wiesemann2014distributionally, rahimian2019distributionally}. Most of the works in the robust/distributionally robust optimization literature are focused on linear or general convex programs, thus the existing methods do not apply to our robust MCP. 
It is worth mentioning some robust models in the  assortment and/or pricing optimization literature, where RUM models are  made use to capture customers' behavior. Some examples are \cite{rusmevichientong2012robust}, \cite{chen2019robust}, and \cite{mai2019robust}, noting that, in the context of assortment and/or pricing optimization, the objective function is often based on one fraction, instead of a sum of fractions as in the context of the MCP. Moreover, the objective function of an assortment problem is typically not submodular, even when the choice model is MNL. \mtien{Our robust MCP  model under MNL closely relates to the robust fractional  0-1 program studied in \cite{mehmanchi2020robust} but  differs by the fact that our approach works with any convex uncertainty sets while the approaches of \cite{mehmanchi2020robust} rely on a particular uncertainty structure introduced by \cite{bertsimas2004robust}. }

\textbf{Paper outline:} We organize the paper as follows. Section \ref{sec:background} provides a background of the deterministic MCP under RUM models. Section \ref{sec:robustMCP} presents our main results for the robust MCP. Section \ref{sec:algo} describes our algorithms used to solve the robust problems. Section \ref{sec:expertiments} provides some numerical experiments, and Section \ref{sec:concl} concludes. Missing proofs  and  additional experiments are provided in the appendix.  

\textbf{Notation:}
Boldface characters represent matrices (or vectors), and $a_i$ denotes the $i$-th element of vector $\ba$. We use $[m]$, for any $m\in \mathbb{N}$, to denote the set $\{1,\ldots,m\}$. 

%
%

\section{Background: Deterministic MCP under RUM}
\label{sec:background}
In this section, we first revisit the RUM framework and then describe the deterministic MCP under RUM models.

\subsection{The RUM Framework and the GEV family}
The RUM framework \citep{McFa78} consists of prominent  discrete choice models for modeling human behavior when faced with a set of discrete choice alternatives. Under the RUM principle, the customers are assumed to associate a random utility $u_j$ with each choice alternative  $j$ in a given choice set of available alternatives $S$. The additive RUM \citep{Fosgerau2013,McFa78} assumes that each random utility is a sum of two parts $u_j = v_j +\xi_j$, where the term $v_j$ is deterministic and can include values representing the  characteristics of the choice alternative $j$ and/or the customers, and $\xi_j$ is random and  unknown to the analyst. Assumptions then can be made for the random terms $\xi_j$, leading to different discrete choice models, i.e., the MNL model relies on the assumption that $\xi_j$ are i.i.d \textit{Extreme Value type I}. The RUM principle then assumes that a choice is made by maximizing the random utilities, thus the probability that an alternative $j$ is selected can be computed as $P (u_j \geq u_k,\:\forall k\in S)$. 

Among RUM models, the MNL model is the simplest one and it is well known that this model fails to capture the correlation between choice alternatives, due to the i.i.d. assumption imposed on the random terms. This drawback is  called as the IIA property \citep{Trai03}. Efforts have been made to relax this property, leading to more advanced choice models with flexible correlation structures such as the nested logit or cross-nested logit models.  Among them, the GEV family \citep{McFa81,Daly2006} is regarded as one of the most general families of discrete choice models in the econometrics and operations research literature. We describe this family of models in the following. 

Assume that the choice set contains $m$ alternatives indexed as $\{1,\ldots,m\}$ and let $\{v_1,\ldots,v_m\}$ be the vector of deterministic  utilities of the $m$ alternatives. 
A GEV model can be represented by a choice probability generating function (CPGF) $G(\bY)$ \citep{McFa81,Fosgerau2013}, where $\bY$ is a vector of size $m$ with entries $Y_j = e^{v_j}$, $j\in[m]$. Given $j_1,\ldots, j_k \in  [m]$, let $\partial G_{j_1...j_k}$,  be the mixed partial derivatives of $G$ with respect to $Y_{j_1},\ldots,Y_{j_k}$.  
The following basic properties hold for any CPGF  $G(\cdot)$ in the GEV family \citep{McFa78}. 
\begin{remark}[Basic properties of GEV's CPGF]
\label{prp:CPGF}
\textit{The following properties hold for any GEV probability generating function.
\begin{itemize}
 \item[(i)] $G(\bY) \geq 0,\ \forall \bY\in \bbR^m_+$,
 \item[(ii)] $G(\bY)$ is homogeneous of degree one, i.e., $G(\lambda \bY) = \lambda G(\bY)$, for any scalar $\lambda>0$
 \item[(iii)] $G(\bY)\rightarrow \infty$ if $Y_j\rightarrow \infty$, {for any $j\in [m]$}
 \item[(iv)]  Given $j_1,\ldots,j_k \in [m]$ distinct from each other,
  $\partial G_{j_1,\ldots,j_k}(\bY)>0$ 
 if $k$ is odd, and \mtien{$\leq 0$} if $k$ is even.
\end{itemize}}
\end{remark}
The above properties are standard for the GEV family. \mtien{The economic intuition behind these properties is however limited \citep[Section 4.6 in][]{Trai03}, especially for Property (iv).} In fact, these properties (or conditions) are to ensure that the corresponding choice model is consistent with the RUM principle and would be useful to design a new \mtien{RUM model.}
\mtien{To support our later exposition, we  present some additional properties of the GEV family in Proposition \ref{prp:CPGF-new} below.}
These new properties can be verified  easily using the  basic properties introduced above.  
\begin{proposition}[Some additional properties of GEV's CPGF] 
\label{prp:CPGF-new}
The following properties  hold for any CPGF under the GEV family: 
\begin{itemize}
    \item[(i)] $G(\bY) = \sum_{j\in [m]} Y_j\partial G_j(\bY)$
 \item[(ii)] $\partial G_j(\lambda \bY) = \partial G_j( \bY)$ for any scalar $\lambda>0$
 \item[(iii)] $\sum_{k\in [m]} Y_k\partial G_{jk} (\bY) = 0$, $\forall j\in [m]$.
\end{itemize} 
\end{proposition}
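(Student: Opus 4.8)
The plan is to obtain all three identities as consequences of the degree-one homogeneity of $G$ recorded in Remark \ref{prp:CPGF}(ii), i.e.\ as instances of Euler's theorem for homogeneous functions. In every case the starting point is the relation $G(\lambda\bY)=\lambda G(\bY)$ for $\lambda>0$, which I differentiate either in the scalar $\lambda$ or in a coordinate $Y_j$ and then evaluate at $\lambda=1$.

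First I would establish (i). Differentiating $G(\lambda\bY)=\lambda G(\bY)$ with respect to $\lambda$ and applying the chain rule gives $\sum_{j\in[m]}Y_j\,\partial G_j(\lambda\bY)=G(\bY)$; setting $\lambda=1$ yields $\sum_{j\in[m]}Y_j\,\partial G_j(\bY)=G(\bY)$, which is exactly Euler's identity for a function that is homogeneous of degree one. Next I would prove (ii) by differentiating the same relation instead with respect to $Y_j$: the chain rule produces $\lambda\,\partial G_j(\lambda\bY)$ on the left and $\lambda\,\partial G_j(\bY)$ on the right, and cancelling $\lambda>0$ shows that each first derivative $\partial G_j$ is homogeneous of degree zero.

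Finally, (iii) follows from (ii) by a second application of Euler's identity, now to the degree-zero homogeneous function $\partial G_j$: this gives $\sum_{k\in[m]}Y_k\,\partial G_{jk}(\bY)=0\cdot\partial G_j(\bY)=0$. Equivalently, I could differentiate the identity from (i) with respect to $Y_k$, obtaining $\partial G_k(\bY)=\partial G_k(\bY)+\sum_{l\in[m]}Y_l\,\partial G_{lk}(\bY)$, and cancel the common term, using symmetry of the mixed second derivatives. Since each step is a single differentiation of the homogeneity relation, there is no substantive obstacle; the only point needing care is the consistent bookkeeping of the chain rule when differentiating in $\lambda$ versus in a coordinate, together with the tacit smoothness of the CPGF $G$ on $\bbR^m_+$ that justifies differentiating the expressions involved.
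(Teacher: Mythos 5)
Your proof is correct and follows essentially the same route as the paper: all three identities are derived from the degree-one homogeneity relation $G(\lambda\bY)=\lambda G(\bY)$, with (i) obtained by differentiating in $\lambda$, (ii) by differentiating in $Y_j$, and (iii) by differentiating the degree-zero homogeneity of $\partial G_j$ in $\lambda$ (your ``second Euler identity'' step is exactly the paper's differentiation of (ii) with respect to $\lambda$ followed by setting $\lambda=1$).
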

\begin{proof}
{Property} (i) can be obtained by taking the derivatives with respect to $\lambda$ on both sides of $G(\lambda \bY) = \lambda G(\bY)$ (Property (ii) of Remark \ref{prp:CPGF}) to have $
G(\bY) = \sum_{j\in [m]}Y_j \partial G_j(\lambda \bY)$. We then  let $\lambda = 1$ to obtain the desired equality. 

For Property (ii), we take derivatives  on both sides of the equality $G(\lambda \bY) = \lambda G(\bY)$ with respect to $Y_j$ to have
$$\lambda \partial G_j(\lambda \bY) = \lambda{\partial} G_j(\bY).$$
By removing $\lambda$ from both sides of the equality, we can  obtain the desired equality $\partial G_j(\lambda \bY) = {\partial} G_j(\bY)$. 

For Property (iii),  we further take the derivatives with respect to $\lambda$ on both sides of (ii), we get $
\sum_{k\in [m]}Y_k\partial G_{jk}(\lambda \bY) = 0$. By letting $\lambda = 1$, we  obtain (iii).
\end{proof}

Under a GEV model specified by a CPGF $G(\bY)$, the choice probabilities are given as  
\[
P(j|\bY,G)  = \frac{Y_i \partial G_i(\bY)}{G(\bY)}.
\]
If the choice model is MNL, the CPGF has a linear form $G(\bY)=\sum_{j\in [m]}Y_j$ and the corresponding choice probabilities are given by a linear fraction  $P(j|\bY,G) = {Y_j}/({\sum_{j\in [m]} Y_j})$. On the other hand, if the choice model is a nested logit model, 
 the choice set can be partitioned into $L$ nests, which are disjoint subsets of the alternatives. Let denote by $n_1,\ldots,n_L$ the $L$ nests.
The corresponding CPGF  has a nonlinear form as 
$
G(\bY) = \sum_{l\in L} \left(\sum_{j\in n_l} Y_j^{\mu_l} \right)^{1/\mu_l},
$
where  $\mu_l\geq 1$, $l\in[L]$, are the parameters of the nested logit model. The choice probabilities  have the more complicated form
\[
P(j|\bY,G) = \frac{\left(\sum_{j'\in n_l} Y_{j'}^{\mu_l}\right)^{1/\mu_l}}{\sum_{l\in [L]}\left(\sum_{j'\in n_l} Y_{j'}^{\mu_l}\right)^{1/\mu_l}} \frac{Y_j^{\mu_l}}{\sum_{j'\in n_l} Y_{j'}^{\mu_l}},\; \forall l\in [L], j\in n_l.  
\]
In a more general setting, a CPGF can be represented by a rooted network \citep{Daly2006}, for which the choice probabilities may have no closed-form and need to be computed by recursion or dynamic programming \citep{MaiFreFosBas15_DynMEV}. 

\subsection{The Deterministic MCP}
In the context of the MCP under  a RUM model, a firm would like to select some locations from a set of available locations to set up new facilities, assuming that there exist facilities from the competitor in the market. The firm then aims to maximize an expected market share achieved by attracting customers to the new facilities. 
We suppose that there are $m$ available locations and we let  $[m] = \{1,2,\ldots,m\}$ be the set of  locations. Let $I$ be the set of geographical zones where customers are located and $q_{i}$ be the number of customers in zone $i\in I$. For each customer zone $i$, let $v_{ij}$ be the corresponding deterministic utility of location $j\in [m]$. These utility values can be inferred by estimating the discrete choice model using observational data of how customers made decisions. 
For each customer zone $i \in I$, the corresponding  discrete choice model  can be represented by a CPGF $G^i(\bY^i)$, where $\bY^i$ is a vector of size $m$ with entries $Y^i_j = e^{v_{ij}}$. 
The choice probability  of a location $j\in [m]$ can then be computed as
\[
P(j|\bY^i) = \frac{Y_j \partial G^i_j(\bY^i)}{U^i+ G^i(\bY^i)},
\]
where $U^i$ is the total utility  of the competitor for zone $i\in I$.
Such an utility is analogous to a non-purchase utility used in the context of assortment  optimization \citep{Talluri2004revenue}.
Note that we can write
\begin{equation*}
P(j|\bY^i) =\frac{\frac{Y_j}{U^i} \partial G^i_j(\bY^i)}{1 + \frac{1}{U^i}G^i(\bY^i)}   \stackrel{(a)}{=} \frac{\frac{Y_j}{U^i} \partial G^i_j\left(\frac{\bY^i}{U^i}\right)}{1 + G^i\left(\frac{\bY^i}{U^i}\right)},
\end{equation*}
where  \textit{(a)} is due to Property (ii) of Proposition \ref{prp:CPGF}. Thus, $P(j|\bY^i)$ are   the choice probabilities given by the  set of  utilities $v'_{ij} =v_{ij} - \ln U^i$. In other words, one can subtract the utilities $v_{ij}$ by $\ln U^i$ to force the competitor's utilities to be 1, without  any loss of generality. Therefore, from now on, for the sake of simplicity, we assume $U^i = 1$, for all $i\in I$.

In the context of the MCP, we are interested in choosing a subset of locations $S\subset [m]$ to locate new facilities. 
Hence, the conditional choice probabilities of choosing a location $j \in S$  can be written as
\[
P(j|\bY^i,S) = \frac{Y^i_j \partial G^i_j(\bY^i|S)}{1+ G^i(\bY^i|S)},\; \forall j\in S,
\]
where the $G^i(\bY^i|S)$ is defined as $G^i(\bY^i|S) = G^i(\overline{\bY}^i)$, where $\overline{\bY}^i$ is a vector of size $m$ with entries $\overline{Y}^i_j = {Y}^i_j$ if $j\in S$ and $\overline{Y}^i_j =0$ otherwise. This can be interpreted as if a location $j$ is not selected, then its utility should be  very low, i.e., $v_{ij} = -\infty$, then $Y^i_j = e^{v_{ij}} = 0$.  
The deterministic MCP under GEV models specified by CPGFs $G^i(\bY^i)$, $i\in I$, can be defined as  
\begin{equation}\label{prob:MCP-1}\tag{\sf MCP}
 \max_{S \in \cK}\left\{f(S) = \sum_{i\in I}q_i\sum_{j\in S} P(j|\bY^i, S) \right\},
\end{equation}
where $\cK$ is the set of feasible solutions. Under a conventional cardinality constraint $|S| \leq C$, $\cK$ can be defined as $\cK = \{S\subset[m]|\; |S|\leq C\}$, for a given constant $C$ such that $1\leq C\leq m$.
\eqref{prob:MCP-1} is generally NP-hard, even under the MNL choice model. However, it is possible to obtain $(1-1/e)$ approximation solutions using a greedy local search algorithm \citep{Dam2021submodularity}.

Under the MNL model, the MCP can be formulated as a linear fractional  program as
\[
\max_{S \in \cK}\left\{f(S) =  \sum_{i\in I} q_i \sum_{i\in I}  \frac{\sum_{j\in S} Y^i_j}{ 1+ \sum_{j\in S} Y^i_j}  \right\}.
\]
The fractional structure allows to formulate the MCP under MNL as a MILP, thus a MILP solver, e.g., CPLEX or GUROBI, can be used \citep{Haase2009,Haase2013,Freire2015}. It is  well-known that the objective function $f(S)$ is submodular, leading to some approaches based on submodular cuts or local search heuristic \citep{Ljubic2018outer,Dam2021submodularity}. Under more general GEV models, e.g., the nested logit or cross-nested logit models \citep{Trai03}, the objective function becomes much more complicated. In fact, it would be possible to reformulate the MCP under a GEV model as a MILP with submodular cuts  \citep{Nemhauser1978analysis,Benati2002,Ljubic2018outer}. \mtien{However, such a MILP approach involves an exponential number of constraints, thus would be not  tractable to solve. In addition, under a general GEV model, the choice probabilities would not be expressed in a closed form \citep{mai2017dynamic}, making the computation of $f(S)$ intractable and the corresponding MILP intractable as well. }

It is \mtien{relevant} to connect the MCP formulation to the context of assortment optimization under discrete choice models. In fact, \eqref{prob:MCP-1} under MNL  shares a close structure   with assortment optimization problems under the mixed logit model \citep{rusmevichientong2014assortment}. \mtien{However,  \eqref{prob:MCP-1} is  more tractable, in the sense it can be written as a binary program with a convex objective function, allowing for some convex optimization techniques (e.g. the outer-approximation algorithm) to be applied , while it is not the case in  assortment optimization. 
Moreover, under the GEV family, the  assortment problem becomes even more challenging to solve and, to the best of our knowledge, there is no algorithm with performance guarantees for such a problem. On the other hand, one can achieve $(1-1/e)$ approximation solutions to the MCP by just using a simple greedy heuristic \citep{Dam2021submodularity}.}

\mtien{There would be relevant situations where  
the algorithms developed in this paper can apply to assortment optimization problems. For example,  one can  think of a situation where a seller needs to select a set of products to sell together with a competitor, and the objective is to maximize the expected number of customers that come to purchase their products, instead of maximizing an expected revenue in a conventional assortment optimization problem. In this context, the assortment optimization model shares the same structure with \eqref{prob:MCP-1} and the  methods developed in this paper can apply.}

\section{Robust MCP}
\label{sec:robustMCP}
We first consider the robust MCP under any GEV model and show that the robust model preserves the monotonicity and submodularity from the deterministic one, which will ensure that a simple greedy heuristic will always return a $(1-1/e)$ approximation solution. Moreover, we show that under the MNL model, the robust model  preserves the concavity, implying that an outer-approximation algorithm could be used to efficiently  solve the robust MCP to optimality. 

\subsection{Robust MCP under GEV models}
In our uncertainty setting, it is assumed that the vectors of utilities are not known with certainty  but belong to some uncertainty sets. That is, we assume that for each customer zone $i\in I$, the corresponding deterministic utilities $\bv^i = \{v_{ij}|\; j\in [m]\}$ can vary in an uncertainty set $\cV_i$ 
and these uncertainty sets are independent across $i\in I$. This setting is natural in the context, in the sense that there is one discrete choice model with choice utilities $\bv^i$ for customers in each zone $i$, and these utilities are typically inferred from observations of how people in that zone made choices. Uncertainty sets could be constructed from this, leading to independent uncertainty sets over customer zones. Moreover, if we relax this assumption, i.e., $\cV_i$ are no longer independent over $i\in I$, the adversary's minimization problem \footnote{When we say ``adversary'', we refer to  the worst-case minimization problem, not the competitor in the market.} will become much more difficult to solve, even under the classical MNL model. In contrast, under the assumption that the uncertainty sets are separable by zones, we will show later that the adversary's problem can be efficiently handled by convex optimization.

We further assume that   $\cV_i$ are convex and bounded for all $i\in I$.
Convexity and boundedness are typical assumptions in the robust optimization literature \citep{Bertsimas2011theoryRO}. 
For later investigations,  we assume that the uncertainty set $\cV_i$ can be defined by a set of constraints $\{g^i_t(\bv^i) \leq 0;\; t = 1,\ldots,T\}$ where  $g^i_t(\bv^i)$ are convex functions in $\bv^i$.
When the choice parameters $\bv^i$, $i\in I$, cannot be identified exactly, we are interested in the worst-case scenario.
The robust version of the classical MCP then can be formulated as 
\begin{equation}
 \max_{S \in \cK}\;\min_{\bv^i \in \cV_i}\left\{f(S,\bV) = \sum_{i\in I}q_i\sum_{j\in S} P(j|\bv^i, S) \right\},    
\end{equation}
where $P(j|\bv^i, S)$ is the choice probability of location $j\in [m]$ given utilities $\bv^i$ and set of locations $S \in \cK$. Under a GEV choice model with CPGFs $G^i$, $i\in I$, we write the choice probabilities as
\[
P(j|\bv^i, S) = \frac{Y^i_j \partial G^i_j(\bY(\bv^i)|S)}{1+ G^i(\bY(\bv^i)|S)},
\]
where $\bY(\bv^i)$ is a vector of size $m$ with entries $Y(\bv^i)_j = e^{v_{ij}}$. The objective function can be further simplified as 
\begin{align}
f(S,\bV) &= \sum_{i\in I}q_i\sum_{j\in S} P(j|\bv^i, S) \nonumber \\
&= \sum_{i\in I}q_i \frac{\sum_{j\in S} Y^i_j \partial G^i_j(\bY(\bv^i)|S)}{1+ G^i(\bY(\bv^i)|S)} \nonumber \\
&\stackrel{(a)}{=} \sum_{i\in I} q_i -  \sum_{i\in I}\frac{q_i}{1+G^i(\bY(\bv^i)|S)},
\end{align}
where $(a)$ is due to Property (i) of Proposition \ref{prp:CPGF-new}, i.e., $\sum_{j\in S} Y^i_j \partial G^i_j(\bY(\bv^i)|S) = G^i(\bY(\bv^i)|S)$.
Thus, the robust problem can be reformulated as
\begin{equation}
\label{prb:robust-MCP} \tag{\sf RMCP}
 \max_{S \in \cK} \left\{f^{\WC}(S) = \sum_{i\in I} q_i -  \sum_{i\in I}\frac{q_i}{1+ \min_{\bv^i \in \cV_i} \{G^i(\bY(\bv^i)|S)\} } \right\},    
\end{equation}
where $f^{\WC}(S)$ is referred to as the worst-case objective function when the choice parameters of the GEV model vary in the uncertainty sets.

\mtien{In  \eqref{prb:robust-MCP} we assume that there is no uncertainty associated with the specification of the competitor's utilities. In a more general setting, it is possible that the competitor's utilities are not known with certainty and would need to be taken into consideration in the robust model. Nevertheless, we will show, in the following,  that such a general uncertainty structure can be converted into the same uncertainty structure in   \eqref{prb:robust-MCP} with shifted (convex) uncertainty sets. To facilitate our exposition,  let us assume that the utility of the competitor is $v^i_0$ for customer zone $i\in I$, and both $v^i_0, \bv^i$ can vary in an uncertainty set $\cV_i$. The robust problem then becomes:}
\begin{equation}
\label{prob:general-1}
 \max_{S \in \cK}\;\min_{\substack{(v^i_0,\bv^i) \in \cV_i\\ \forall i\in I}}\left\{ \sum_{i\in I}q_i \frac{\sum_{j\in [m]}Y^i_j(v^i_j) \partial G^i_j(\bY(\bv^i)|S)}{e^{v^i_0}+ G^i(\bY(\bv^i)|S)} \right\}.    
\end{equation}
Proposition \ref{prp:equivalent-general} shows that \eqref{prob:general-1} can be converted equivalently into a \mtien{robust} problem of the same structure as \eqref{prb:robust-MCP}. The proof can be found in Appendix \ref{appd:proofs}.
\begin{proposition}
\label{prp:equivalent-general}
\eqref{prb:robust-MCP} is equivalent to
\begin{equation}
\label{prob:general-2}
 \max_{S \in \cK}\;\min_{\substack{\widetilde{\bv}^i \in \widetilde{\cV}_i\\ \forall i\in I}}\left\{ \sum_{i\in I}q_i \frac{\sum_{j\in [m]}Y^i_j(\widetilde{v}^i_j) \partial G^i_j(\bY(\widetilde{\bv}^i)|S)}{1 + G^i(\bY(\widetilde{\bv}^i)|S)} \right\},  
\end{equation}
where $\widetilde{\cV}_i = \left\{ \widetilde{\bv}^i\in \bbR^{m}\Big|\; \exists \bv^i\in \cV_i \text{ s.t. } \widetilde{v}^i_j = v^i_j - v^i_0,\;\forall j\in [m]\right\}.$
\end{proposition}

It can be seen that $\widetilde{\cV}_i$ is convex if $\cV_i$ is convex. The inner minimization problem of \eqref{prob:general-1} differs from the inner minimization of \eqref{prb:robust-MCP} just by some  additional linear constraints, i.e., 
\begin{align}
    \underset{}{\min} \qquad &  \sum_{i\in I}q_i \frac{\sum_{j\in [m]}Y^i_j(\widetilde{v}^i_j) \partial G^i_j(\bY(\widetilde{\bv}^i)|S)}{1 + G^i(\bY(\widetilde{\bv}^i)|S)}  \nonumber\\
    \text{subject to} \qquad &  \widetilde{v}^i_j = v^i_j-v^i_0 &\forall i\in I, j\in [m] \nonumber\\
    & (v^0_j,\bv^i_j) \in \cV_i &\forall i\in I \nonumber.
\end{align}
\mtien{Thanks to Proposition \ref{prp:equivalent-general},  for the sake of simplicity, we will keep the  assumption that the utilities of the competitor are deterministically equal to 1 throughout the rest of the paper.}

We now explore some properties of the robust program in the following.
\mtien{Proposition \ref{prp:convexity-G} below first shows} that the inner minimization problem (adversary's problem) can be solved efficiently via convex optimization.  
Before discussing the result, let us define a binary representation of $f^{\WC}(S)$ as follows.  For any set $S\in\cK$, let $\bx^S \in \{0,1\}^m$ such that $x^S_j = 1$ if $j\in S$ and $x^S_j = 0$ otherwise. Then, we can write $G^i(\bY(\bv^i)|S) = G^i(\bY(\bv^i) \circ \bx^S)$, where $\circ$ is the element-by-element operator. 
\begin{proposition}[Convexity of the adversary's minimization problems]
\label{prp:convexity-G}
Given any $i\in I$ and $S\subset [m]$, $G^i(\bY(\bv^i)|S)$ is strictly convex in $\bv^i$. 
\end{proposition}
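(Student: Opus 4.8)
The plan is to regard $G^i(\bY(\bv^i)|S)$ as the composition of the CPGF $G^i$ with the coordinatewise exponential map $\bv^i \mapsto \bY(\bv^i)$ and to establish strict convexity by computing the Hessian of $g(\bv^i) = G^i(\bY(\bv^i)|S)$ with respect to the free variables $\{v_{ij}:\, j\in S\}$ (the coordinates outside $S$ are frozen to $Y^i_j=0$ by the restriction, so $g$ does not depend on them). First I would compute the gradient using the chain rule and $\partial Y^i_j/\partial v_{ij} = Y^i_j$, which gives $\partial g/\partial v_{ij} = Y^i_j\, \partial G^i_j$. Differentiating a second time yields the Hessian entries $H_{jk} = Y^i_j Y^i_k\, \partial G^i_{jk} + \delta_{jk}\, Y^i_j\, \partial G^i_j$, where $\delta_{jk}$ is the Kronecker delta and all derivatives are evaluated at the restricted vector $\bY(\bv^i)|S$.

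Next I would test positive definiteness by evaluating the quadratic form $\bz^\transpose H \bz$ for an arbitrary $\bz$ supported on $S$ and splitting it into a diagonal part $\sum_{j\in S} z_j^2\, Y^i_j\, \partial G^i_j$ and a remaining part $Q = \sum_{j,k\in S} z_j z_k\, Y^i_j Y^i_k\, \partial G^i_{jk}$. The diagonal part is \emph{strictly} positive whenever $\bz\neq \mathbf{0}$, because $Y^i_j = e^{v_{ij}}>0$ and, by Property (iv) of Remark \ref{prp:CPGF} with $k=1$ odd, $\partial G^i_j > 0$. It therefore suffices to show $Q \geq 0$, after which strict positive definiteness of $H$, and hence strict convexity of $g$, follows immediately.

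The main obstacle is the term $Q$, and specifically the diagonal second derivatives $\partial G^i_{jj}$, whose sign is \emph{not} delivered by Property (iv) of Remark \ref{prp:CPGF}, since that property concerns only indices distinct from each other. The key is Property (iii) of Proposition \ref{prp:CPGF-new}, namely $\sum_{k} Y^i_k\, \partial G^i_{jk} = 0$, which states that each row of $(\partial G^i_{jk})$ sums to zero against the weights $Y^i_k$. Writing $a_{jk} := -\partial G^i_{jk} \geq 0$ for $j\neq k$ (nonnegativity from Property (iv) with $k=2$ even), this identity gives $Y^i_j\, \partial G^i_{jj} = \sum_{k\neq j} Y^i_k\, a_{jk}$, which exhibits $(Y^i_j Y^i_k\, \partial G^i_{jk})$ as a weighted graph Laplacian. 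Substituting this expression for the diagonal entries and symmetrizing over the pairs $(j,k)$ and $(k,j)$, I expect $Q$ to collapse into the manifestly nonnegative form $Q = \tfrac12\sum_{j\neq k} Y^i_j Y^i_k\, a_{jk}\,(z_j - z_k)^2 \geq 0$. Combining this with the strictly positive diagonal part gives $\bz^\transpose H \bz > 0$ for every $\bz\neq\mathbf{0}$ supported on $S$, establishing the strict convexity of $G^i(\bY(\bv^i)|S)$ in $\bv^i$.
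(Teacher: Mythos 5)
Your proof is correct and follows essentially the same route as the paper's: the same Hessian computation $H_{jk} = Y^i_jY^i_k\,\partial G^i_{jk} + \delta_{jk}Y^i_j\,\partial G^i_j$, the diagonal part made strictly positive via Property (iv) of Remark \ref{prp:CPGF}, and the remaining part shown nonnegative using the weighted row-sum identity of Proposition \ref{prp:CPGF-new}(iii). The only differences are that where the paper cites Theorem A.6 of \cite{DeKlerk2006aspects} (positive semi-definiteness of symmetric diagonally dominant matrices with nonpositive off-diagonal entries), you prove that fact directly by collapsing the quadratic form into the graph-Laplacian expression $\tfrac12\sum_{j\neq k} Y^i_jY^i_k a_{jk}(z_j-z_k)^2\geq 0$, and you are slightly more careful in restricting to the coordinates $j\in S$ --- a point worth keeping, since the objective does not depend on $v_{ij}$ for $j\notin S$ and strict convexity can only hold in the free variables.
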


As shown in \cite{Dam2021submodularity}, the objective function of the deterministic MCP is monotonic increasing, i.e., adding more facilities always yields better objective values. The following proposition shows that the robust model preserves the monotonicity. 
\begin{theorem}[Robustness Preserves the Monotonicity]
\label{thm:monotonicity}
Given $S\subset [m]$, for any $j\in [m]\backslash S$,  we have $$f^{\WC}(S \cup \{j\}) > f^{\WC}(S).$$ 
\end{theorem}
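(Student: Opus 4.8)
The plan is to reduce the claim to a strict monotonicity property of the inner minimization value and then exploit the strict positivity of the first derivatives of the CPGF. Writing $\Phi_i(S) = \min_{\bv^i\in\cV_i} G^i(\bY(\bv^i)|S)$, the definition of $f^{\WC}$ gives
\[
f^{\WC}(S\cup\{j\}) - f^{\WC}(S) = \sum_{i\in I} q_i\left(\frac{1}{1+\Phi_i(S)} - \frac{1}{1+\Phi_i(S\cup\{j\})}\right).
\]
Since every $q_i > 0$ and $\Phi_i(S) \geq 0$ (Property (i) of Remark \ref{prp:CPGF} guarantees the denominators stay positive), it suffices to prove the strict inequality $\Phi_i(S\cup\{j\}) > \Phi_i(S)$ for each $i\in I$; summing the resulting strictly positive terms then yields the theorem.

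The core step is a pointwise comparison. Fix $i$ and any $\bv^i\in\cV_i$. In the binary representation, passing from $S$ to $S\cup\{j\}$ changes only the $j$-th coordinate of the masked vector, from $0$ to $Y^i_j = e^{v_{ij}} > 0$, leaving all others unchanged. I would write the gap as a one-dimensional integral,
\[
G^i(\bY(\bv^i)|S\cup\{j\}) - G^i(\bY(\bv^i)|S) = \int_0^{Y^i_j} \partial G^i_j(\cdots) \, dt,
\]
where the integrand is the first partial derivative of $G^i$ in the $j$-th coordinate, evaluated along the segment in which that coordinate runs from $0$ to $Y^i_j$. By Property (iv) of Remark \ref{prp:CPGF} with $k=1$ (odd) we have $\partial G^i_j > 0$ everywhere, so the integral is strictly positive; hence $G^i(\bY(\bv^i)|S\cup\{j\}) > G^i(\bY(\bv^i)|S)$ for every fixed $\bv^i$.

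To lift this pointwise inequality to the minimum values, I would anchor at the minimizer of the larger problem. Since $\cV_i$ is convex and bounded (hence compact once closedness of the constraint region $\{g^i_t \le 0\}$ is assumed) and $G^i$ is continuous—indeed strictly convex in $\bv^i$ by Proposition \ref{prp:convexity-G}—the minimum defining $\Phi_i(S\cup\{j\})$ is attained at some $\bv^{i\ast}\in\cV_i$. Using $\bv^{i\ast}$ as a feasible (generally suboptimal) point for the $S$-problem gives
\[
\Phi_i(S\cup\{j\}) = G^i(\bY(\bv^{i\ast})|S\cup\{j\}) > G^i(\bY(\bv^{i\ast})|S) \geq \Phi_i(S),
\]
where the strict middle inequality is the pointwise comparison above and the last inequality holds by definition of the minimum. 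This establishes $\Phi_i(S\cup\{j\}) > \Phi_i(S)$ for all $i$, completing the argument.

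The main obstacle I anticipate is precisely this lifting step: strict positivity of a pointwise gap does not automatically survive taking minima over the two different masked problems. The resolution is to evaluate at the minimizer of the \emph{larger} set $S\cup\{j\}$ and treat it as a feasible point for $S$, which requires that the inner minimum be attained—so the argument hinges on compactness of $\cV_i$. Failing closedness, I would instead run a minimizing-sequence argument together with a uniform lower bound $\delta_i>0$ on the integral, obtained from boundedness of $\cV_i$ (so $Y^i_j$ stays bounded away from $0$) and strict positivity of $\partial G^i_j$ on the relevant compact region.
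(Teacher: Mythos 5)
Your proof is correct and follows essentially the same route as the paper's: strict positivity of $Y^i_j\,\partial G^i_j$ gives the pointwise inequality $G^i(\bY(\bv^i)|S\cup\{j\}) > G^i(\bY(\bv^i)|S)$ for every fixed $\bv^i$, which is then lifted to the inner minima and hence to $f^{\WC}$. Your treatment is in fact slightly more careful than the paper's, which passes from the pointwise inequality to strict inequality of the two minima with a bare ``consequently''; anchoring at the minimizer of the $S\cup\{j\}$ problem (or the uniform-gap argument via boundedness of $\cV_i$ when attainment is not guaranteed) is precisely the justification that step requires.
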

\begin{proof}
Let $\bx \in \{0,1\}^m$ be the binary presentation of  set $S \subset [m]$. We write the objective function as 
\[
f^{\WC}(\bx) =  \sum_{i\in I} q_i -  \sum_{i\in I}\frac{q_i}{1+ \min_{\bv^i \in \cV_i} \{G^i(\bY(\bv^i) \circ \bx )\} }
\]
We need to prove that, for any $\bx \in X $  and $j\in [m]$ such that $x_j = 0$, we have $f^{\WC}(\bx + \bbe^j) > f^{\WC}(\bx)$, where $e^{j}$ is a vector of size $m$ with all zero elements except the $j$-th one which is equal to 1. To prove this, let us consider $G^i(\bY(\bv^i) \circ \bx)$. Taking the derivative of $G^i(\bY(\bv^i) \circ \bx)$ w.r.t. $x_j$ we have
\[
\frac{\partial G^i(\bY(\bv^i) \circ \bx)}{\partial x_j} = { Y^i_j(\bv^i)\partial G^i_j(\bY(\bv^i) \circ \bx)} \stackrel{(b)}{>}0,  
\]
where the (strict) inequality $(b)$ is because $Y^i_j(\bv^i) = e^{v_{ij}}>0$ and $\partial G^i_j(\bY(\bv^i) \circ \bx)>0$ (Property (iv) of Remark \ref{prp:CPGF}). 
Thus, 
\[
G^i(\bY(\bv^i) \circ (\bx+\bbe^j)) > G^i(\bY(\bv^i) \circ \bx),
\]
and consequently,
\[
\min_{\bv^i \in\cV_i} \{G^i(\bY(\bv^i) \circ (\bx+\bbe^j))\} > \min_{\bv^i \in\cV_i} \{G^i(\bY(\bv^i) \circ \bx)\}
\]
which directly leads to the desired inequality $f^{\WC}(\bx + \bbe^j) > f^{\WC}(\bx)$. 
\end{proof}

Similar to the deterministic case, the monotonicity implies that an optimal solution $S^*$ to \eqref{prb:robust-MCP} always reaches its maximum capacity, i.e., $|S^*| = C$. 

It is known  that the objective function of the deterministic MCP is submodular \citep{Dam2021submodularity}. 
Typically,  a robust version of a monotonic submodular function is not submodular \citep{Orlin2018robustSubModular}.
However, in the theorem below, we show that the inclusion of the adversary in our robust MCP  preserves the submodularity. 
\begin{theorem}[Robustness Preserves the Submodularity]
\label{thm:submodularity}
$f^{\WC}(S)$ is submodular, i.e., for two sets $A\subset B \subset [m]$ and for any $j\in [m]\backslash  B$ we have
\[
f^{\WC}(A \cup \{j\}) - f^{\WC}(A) \geq   f^{\WC}(B \cup \{j\}) - f^{\WC}(B).
\]
\end{theorem}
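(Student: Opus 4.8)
The plan is to first strip off the parts of \eqref{prb:robust-MCP} that carry no difficulty and localize the problem to a single customer zone. Writing the worst-case objective as $f^{\WC}(S) = \sum_{i\in I} q_i - \sum_{i\in I} q_i/(1+h^i(S))$ with $h^i(S) = \min_{\bv^i\in\cV_i} G^i(\bY(\bv^i)|S)$, the leading term is constant and a finite sum of submodular set functions is submodular, so it suffices to prove that each map $S \mapsto -q_i/(1+h^i(S))$ is submodular; equivalently, that for each zone the worst-case ``competitor'' probability $\Pi^i(S) = \max_{\bv^i\in\cV_i} 1/(1+G^i(\bY(\bv^i)|S))$ is supermodular in $S$. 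Here I would use explicitly that the sets $\cV_i$ are independent across $i\in I$, so the adversary separates zone by zone and no cross-zone coupling term survives.

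Next I would record the fixed-parameter building block. For a frozen $\bv^i$, the identity $\sum_{j\in S} P(j|\bv^i,S) = 1 - 1/(1+G^i(\bY(\bv^i)|S))$ together with Theorem \ref{thm:det-MCP} shows that $S \mapsto q_i\sum_{j\in S}P(j|\bv^i,S)$ is monotone submodular, i.e.\ $S\mapsto 1/(1+G^i(\bY(\bv^i)|S))$ is supermodular for every fixed $\bv^i$. Thus $\Pi^i$ is a pointwise maximum, over the convex set $\cV_i$, of supermodular functions. This is precisely where the genuine difficulty lies: a pointwise maximum of supermodular functions (equivalently a pointwise minimum of submodular ones) is in general neither supermodular nor submodular, so the fixed-parameter result cannot be invoked verbatim. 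I expect this to be the main obstacle, and resolving it is exactly what separates the robust claim from its deterministic counterpart.

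To break the obstacle I would pass to the marginal (discrete-derivative) form of supermodularity and bring in the convexity of the inner problem. Fix $A\subseteq B$ and $j\in[m]\setminus B$, and let $\bv^{A}$, $\bv^{B}$, $\bv^{A\cup\{j\}}$, $\bv^{B\cup\{j\}}$ denote the corresponding worst-case parameters, which exist and are unique because $G^i(\bY(\bv^i)|S)$ is strictly convex in $\bv^i$ by Proposition \ref{prp:convexity-G}. Using the optimality of each worst-case parameter as a one-sided feasibility inequality, I would sandwich each marginal $h^i(S\cup\{j\})-h^i(S)$ between the two marginals of $G^i$ obtained by holding the adversary fixed at the optimizer of $S$ and at the optimizer of $S\cup\{j\}$; combined with the sign properties $\partial G^i_j>0$, $\partial G^i_{jk}\le0$ (Remark \ref{prp:CPGF}(iv)) and the identity $\sum_{k}Y^i_k\partial G^i_{jk}=0$ (Proposition \ref{prp:CPGF-new}(iii)), this reduces the claim to a single comparison of the marginal gain of $j$ evaluated under two different worst-case parameters, namely the optimizer at $A$ versus the optimizer at $B$.

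The crux is therefore to show that this worst-case marginal gain is non-increasing as the set grows even though the adversary re-optimizes. I would establish this through a monotone-comparative-statics argument for the adversary: since enlarging $S$ only activates additional coordinates of $\bv^i$ in $G^i(\bY(\bv^i)\circ\bx^S)$ while the inner objective is convex, the worst-case utilities on the coordinates already present should move monotonically, which would let me replace $\bv^{B}$ by $\bv^{A\cup\{j\}}$ in the upper bound for the $B$-marginal without increasing it, after which the fixed-parameter submodularity closes the chain. The delicate point is that $\cV_i$ is merely convex rather than a lattice, so a direct appeal to Topkis-type results is unavailable; I would instead derive the required one-coordinate monotonicity from the first-order optimality conditions of the strictly convex inner problem, using that $t\mapsto 1/(1+t)$ is decreasing and convex to transfer the ordering of the $h^i$-marginals into the desired ordering of the objective marginals.
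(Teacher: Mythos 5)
Your first two paragraphs reproduce exactly the reduction the paper itself uses: decompose by zones (valid because the $\cV_i$ are independent across $i$), write $f^{\WC}(S)=\sum_{i}q_i-\sum_i q_i/(1+h^i(S))$ with $h^i(S)=\min_{\bv^i\in\cV_i}G^i(\bY(\bv^i)|S)$ (the paper's $\phi^i$), and push a diminishing-returns property of $h^i$ through the map $t\mapsto 1-1/(1+t)$ --- the paper's proof of Theorem \ref{thm:submodularity} performs precisely this composition via the ratio inequalities culminating in \eqref{eq:th1-eq1} and \eqref{eq:th1-eq2}. The genuine gap is that everything after this reduction is a plan rather than a proof: the entire burden of the theorem sits in your fourth paragraph, where the key claim --- that the worst-case marginal gain of $j$ is non-increasing in the set even though the adversary re-optimizes --- is only announced. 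You concede that Topkis-type arguments are unavailable and promise to extract a one-coordinate monotonicity from the first-order conditions of the inner problem, but no such derivation is given. There is also a flaw in your setup: Proposition \ref{prp:convexity-G} cannot deliver uniqueness of $\bv^{B}$, because $G^i(\bY(\bv^i)|B)$ does not depend on the coordinates $v^i_k$ with $k\notin B$ (they are masked to zero), so the $j$-th coordinate of ``the'' optimizer at $B$ is undetermined and the upper bound on $h^i(B\cup\{j\})-h^i(B)$ obtained by freezing the adversary at $\bv^{B}$ depends on an arbitrary selection.

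Worse, the missing step is not merely unproven; it is false at the level of generality you work in, so no comparative-statics argument can close your sandwich. Take MNL, a single zone, $m=3$, and the segment $\cV=\{(tM+c,\,(1-t)M+c,\,tM+c):t\in[0,1]\}$, which is convex, bounded, and described by linear constraints. Writing $\epsilon=e^{c}$ and $K=e^{M/2}$, one computes $h(\{1\})=\epsilon$ and $h(\{1,3\})=2\epsilon$ (the adversary sits at $t=0$), while $h(\{1,2\})=2K\epsilon$ and $h(\{1,2,3\})=2\sqrt{2}K\epsilon$ (the adversary moves to interior points near $t=1/2$). The worst-case marginal of adding location $2$ is therefore $(2K-1)\epsilon$ at $\{1\}$ but $(2\sqrt{2}K-2)\epsilon$ at the superset $\{1,3\}$, strictly larger once $K>1/(2\sqrt{2}-2)$: the adversary's re-optimization makes $h$ violate diminishing returns exactly where your argument needs it, so the ordering of $h$-marginals you intend to ``transfer'' through $1/(1+t)$ does not hold. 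The paper fills this spot by a different device: Lemmas \ref{lm:lm1} and \ref{lm:lm2} work on the relaxation $\bx\in[0,1]^m$ and invoke an envelope-theorem computation to claim $\partial^2 h^i/\partial x_j\partial x_k=\partial^2 G^i(\bY(\bv^{i*})\circ\bx)/\partial x_j\partial x_k\le 0$, after which the composition argument finishes. You should know, however, that your instinct about where the difficulty lies is well placed: second derivatives of a min-value function carry optimizer-sensitivity terms that Lemma \ref{lm:lm1} discards, and in the example above $h(\bx)=2K\epsilon\sqrt{x_2(x_1+x_3)}$ on the region of interior minimizers, whose $(x_1,x_2)$ cross-partial is positive; moreover, letting $\epsilon\to 0$ one checks $f^{\WC}(\{1,2\})-f^{\WC}(\{1\})<f^{\WC}(\{1,2,3\})-f^{\WC}(\{1,3\})$, so the submodularity inequality itself fails for this convex uncertainty set. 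In short, your reduction is the paper's reduction, but the crux is absent, and as formulated (convex $\cV_i$ with no further structure, such as the rectangularity in the paper's later proposition) it cannot be supplied.
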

In the following we provide essential steps to prove the submodularity result.
We will employ the binary representation of $f^{\WC}(S)$ to prove the claim. Let 
\begin{equation}
\label{eq:define-phi}
\phi^i(\bx)  = \min_{\bv^i\in\cV_i} G^i(\bY(\bv^i) \circ \bx).    
\end{equation}
Since $G^i(\bY(\bv^i) \circ \bx)$ is strictly convex and differentiable, $\phi^i(\bx)$ is  continuous and differentiable  \citep[Corollary 8.2 in ][]{Hogan1973}.  
We first introduce the following lemma.
\begin{lemma}[First and second order derivatives of $\phi^i(\bx)$]
\label{lm:lm1}
Given $\bx \in [0,1]^m$, let $\bv^{i*}$ be an optimal solution to the problem  $\min_{\bv^i\in\cV_i} G^i(\bY(\bv^i) \circ \bx)$, then for any $j,k\in [m]$, 
\begin{align}
    \frac{\partial \phi^i(\bx)}{\partial x_j} &= \frac{\partial G^i(\bY(\bv^{i*}) \circ \bx)}{\partial x_j}\nonumber\\
    \frac{\partial^2 \phi^i(\bx)}{\partial x_j \partial x_k} &= \frac{\partial^2 G^i(\bY(\bv^{i*}) \circ \bx)}{\partial x_j \partial x_k}.\nonumber
\end{align}
\end{lemma}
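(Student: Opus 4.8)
The plan is to treat $\phi^i$ as the value function of the parametric convex program $\min_{\bv^i\in\cV_i}G^i(\bY(\bv^i)\circ\bx)$ and to read off its derivatives through an envelope (Danskin-type) argument. Two ingredients are already in place. First, Proposition~\ref{prp:convexity-G} shows that $G^i(\bY(\bv^i)\circ\bx)$ is strictly convex in $\bv^i$, so the minimizer $\bv^{i*}=\bv^{i*}(\bx)$ is unique; combined with the cited result of \cite{Hogan1973}, this gives that $\phi^i$ is continuously differentiable, which legitimizes the chain-rule manipulations below. Second, since $\partial G^i_j>0$ for every $j$ (Remark~\ref{prp:CPGF}), the map $\bv^i\mapsto G^i(\bY(\bv^i)\circ\bx)$ is nondecreasing in each coordinate and is never minimized in the interior of $\cV_i$, so the worst-case utilities $\bv^{i*}$ lie on $\partial\cV_i$. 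This is what makes the Karush--Kuhn--Tucker (KKT) system the right device for controlling how $\bv^{i*}$ reacts to $\bx$.

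For the first-order identity I would write $\phi^i(\bx)=G^i(\bY(\bv^{i*}(\bx))\circ\bx)$ and expand by the chain rule,
\[
\frac{\partial \phi^i(\bx)}{\partial x_j}=\frac{\partial G^i(\bY(\bv^{i*})\circ\bx)}{\partial x_j}+\sum_{l\in[m]}\frac{\partial G^i(\bY(\bv^{i*})\circ\bx)}{\partial v^i_l}\,\frac{\partial v^{i*}_l}{\partial x_j}.
\]
The task is then to show that the second sum vanishes. Writing the stationarity condition at $\bv^{i*}$ as $\nabla_{\bv}G^i=-\sum_t\lambda_t\nabla_{\bv}g^i_t$ with multipliers $\lambda_t\ge0$, the sum becomes $-\sum_t\lambda_t\sum_l(\partial g^i_t/\partial v^i_l)(\partial v^{i*}_l/\partial x_j)$. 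Because the sets $\cV_i$ do not depend on $\bx$, differentiating each active-constraint identity $g^i_t(\bv^{i*}(\bx))=0$ yields $\sum_l(\partial g^i_t/\partial v^i_l)(\partial v^{i*}_l/\partial x_j)=0$, whereas inactive constraints carry $\lambda_t=0$; hence every term drops and the first identity follows. This is the classical envelope/Danskin argument, and I expect it to go through cleanly.

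For the second-order identity I would differentiate the first-order identity once more,
\[
\frac{\partial^2 \phi^i(\bx)}{\partial x_j\partial x_k}=\frac{\partial^2 G^i(\bY(\bv^{i*})\circ\bx)}{\partial x_j\partial x_k}+\sum_{l\in[m]}\frac{\partial^2 G^i(\bY(\bv^{i*})\circ\bx)}{\partial x_j\,\partial v^i_l}\,\frac{\partial v^{i*}_l}{\partial x_k},
\]
so that the claim reduces to showing the residual sensitivity sum is zero. This is the step I expect to be the main obstacle, because, unlike the first-order case, there is no single optimality condition that annihilates it outright. My plan here is to recover $\partial v^{i*}_l/\partial x_k$ by applying the implicit function theorem to the KKT system and then to exploit the GEV structure: using the homogeneity relation $\partial G^i/\partial v^i_l=(x_l e^{v^i_l})\,\partial G^i_l$ (equivalently $\partial G^i/\partial v^i_l=x_l\,\partial G^i/\partial x_l$) together with Property~(iii) of Proposition~\ref{prp:CPGF-new}, namely $\sum_{k}\overline{Y}_k\,\partial G^i_{jk}(\overline{Y})=0$, to force the curvature contributions to cancel along the active-constraint tangent space. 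I would expect this cancellation to be transparent when $\cV_i$ is separable (box-type), where monotonicity pins $\bv^{i*}$ to a fixed face independent of $\bx$ and hence $\partial v^{i*}_l/\partial x_k=0$ directly; the delicate part of the argument — and the crux of the whole lemma — is to verify that the same cancellation persists for a general convex uncertainty set, where $\bv^{i*}$ genuinely moves with $\bx$.
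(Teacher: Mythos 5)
Your proof of the first identity is correct and is essentially the paper's own: the paper likewise forms the Lagrangian $\cL(\bx,\bv^i,\bld)=G^i(\bY(\bv^i)\circ\bx)+\sum_{t\in[T]}\lambda_t g^i_t(\bv^i)$ and invokes the envelope theorem, the constraint terms dropping out because the $g^i_t$ do not involve $\bx$; your stationarity-plus-active-constraint computation simply writes that argument out by hand.

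The second identity is where your proposal has a genuine gap --- and you say so yourself. Your plan reduces the claim to showing
\[
\sum_{l\in[m]}\frac{\partial^2 G^i(\bY(\bv^{i*})\circ\bx)}{\partial x_j\,\partial v^i_l}\,\frac{\partial v^{i*}_l}{\partial x_k}=0,
\]
which you verify only for box-type $\cV_i$ (where $\partial v^{i*}_l/\partial x_k=0$); for general convex $\cV_i$ you offer a hoped-for cancellation, not an argument, and a deferred crux is not a proof. For comparison, the paper performs the same chain-rule expansion into three pieces (its equation \eqref{eq:lm1-eq1}): the pure term, the $\bv$-sensitivity sum above, and a multiplier-sensitivity sum. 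The multiplier sum does vanish (trivially, since $\partial^2\cL/\partial x_j\partial\lambda_t=\partial g^i_t(\bv^i)/\partial x_j=0$ because $\cV_i$ does not depend on $\bx$), but the $\bv$-sensitivity sum is discarded by citing the stationarity condition \eqref{eq:lm1-eq2}, i.e.\ $\partial\cL/\partial v^i_l=0$. That inference is not valid: a first-order condition does not annihilate a cross second derivative multiplied by a nonzero sensitivity $\partial v^{i*}_l/\partial x_k$. So the step you could not justify is exactly the step the paper asserts without justification, and in fact no justification exists at this level of generality. Concretely, under MNL ($G^i(\bY(\bv^i)\circ\bx)=\sum_l x_l e^{v^i_l}$), take $\cV_i=\{\bv^i:\sum_l v^i_l\ge c,\ |v^i_l|\le M\}$ with $M$ large; for $\bx$ in a neighborhood of $(\tfrac12,\ldots,\tfrac12)$ the (unique) minimizer satisfies $x_l e^{v^{i*}_l}=\lambda$ for all $l$, which gives $\phi^i(\bx)=m\,e^{c/m}\bigl(\prod_l x_l\bigr)^{1/m}$, whose mixed partials $\partial^2\phi^i/\partial x_j\partial x_k$ are strictly positive for $j\neq k$, whereas $\partial^2 G^i/\partial x_j\partial x_k\equiv 0$ for MNL. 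Hence the second identity of the lemma is false for general convex, bounded $\cV_i$; it holds only under additional structure (e.g.\ rectangularity, or any condition forcing the sensitivity term to vanish). Your instinct that this is ``the crux of the whole lemma'' was right: neither your plan nor the paper's stated proof closes it.
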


Note that $\min_{\bv^i\in\cV_i} G^i(\bY(\bv^i) \circ \bx)$ always yields a unique optimal solution due to the strict concavity of $G^i(\bY(\bv^i) \circ \bx)$ (Proposition \ref{prp:convexity-G}). Thus, $\bv^{i*}$ \mtien{mentioned} in Lemma \ref{lm:lm1} is always unique.

We need one more lemma to complete the proof of the submodularity result. The following lemma shows a monotonicity  behavior of ${\partial \phi^i(\bx)}/{\partial x_k}$ as a function of $\bx$. 
\begin{lemma}\label{lm:lm2}
Given $\bx \in \{0,1\}^m$ and any $j,k\in [m]$ such that $x_j = x_k =0$ and $j\neq k$,
we have
\[
\frac{\partial \phi^i(\bx + \bbe^j)}{\partial x_k} \leq \frac{\partial \phi^i(\bx)}{\partial x_k}.
\]
\end{lemma}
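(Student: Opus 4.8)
The plan is to prove the inequality by passing to the continuous relaxation $\bx \in [0,1]^m$, showing that the off-diagonal second-order partials of $\phi^i$ are nonpositive, and then integrating this derivative along the segment from $\bx$ to $\bx + \bbe^j$. Concretely, I would fix $j \neq k$ with $x_j = x_k = 0$, define $g(t) = \partial \phi^i(\bx + t\bbe^j)/\partial x_k$ for $t \in [0,1]$, and argue that $g$ is nonincreasing; then $g(1) = \partial \phi^i(\bx+\bbe^j)/\partial x_k \le g(0) = \partial \phi^i(\bx)/\partial x_k$ is exactly the claim. Since $x_j = x_k = 0$, the point $\bx + t\bbe^j$ lies in $[0,1]^m$ for every $t \in [0,1]$, so all quantities below are well defined.

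The first step is to invoke Lemma \ref{lm:lm1}, which gives $\partial^2 \phi^i(\bx)/\partial x_j\partial x_k = \partial^2 G^i(\bY(\bv^{i*})\circ\bx)/\partial x_j\partial x_k$, where $\bv^{i*}$ is the (unique, by strict convexity from Proposition \ref{prp:convexity-G}) minimizer at $\bx$. The second step is to evaluate this mixed partial of $G^i$ explicitly. Writing $\bZ = \bY(\bv^{i*})\circ\bx$ and using $\partial Z_\ell/\partial x_\ell = Y^i_\ell(\bv^{i*})$, the same chain-rule computation as in the proof of Proposition \ref{prp:convexity-G} yields, for $j\neq k$,
\[
\frac{\partial^2 G^i(\bY(\bv^{i*})\circ \bx)}{\partial x_j \partial x_k} = Y^i_j(\bv^{i*})\,Y^i_k(\bv^{i*})\,\partial G^i_{jk}(\bZ).
\]
The third step is to sign this expression: since $j\neq k$, the quantity $\partial G^i_{jk}$ is a second-order (hence even-order) mixed derivative of the CPGF, so $\partial G^i_{jk}(\bZ)\le 0$ by property (iv) of Remark \ref{prp:CPGF}; as $Y^i_j(\bv^{i*}),Y^i_k(\bv^{i*})>0$, we conclude $\partial^2\phi^i(\bx)/\partial x_j\partial x_k \le 0$. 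Applying this identity at each interior point $\bx + t\bbe^j$ gives $g'(t)\le 0$, and $g(1)-g(0)=\int_0^1 g'(t)\,dt\le 0$ closes the argument.

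The main obstacle I anticipate is regularity rather than algebra: the monotonicity-of-$g$ step needs $\partial\phi^i/\partial x_k$ to be absolutely continuous in the $x_j$-direction so that the fundamental theorem of calculus applies, whereas Lemma \ref{lm:lm1} only asserts the second-derivative identity pointwise. This is precisely where strict convexity of $G^i(\bY(\bv^i)\circ\bx)$ in $\bv^i$ (Proposition \ref{prp:convexity-G}) does the real work: it forces the minimizer $\bv^{i*}(\bx)$ to be unique and to vary continuously with $\bx$, so that $\partial\phi^i(\bx)/\partial x_k = Y^i_k(\bv^{i*}(\bx))\,\partial G^i_k(\bY(\bv^{i*}(\bx))\circ\bx)$ is continuous and the mixed partial above is continuous in $t$ along the segment. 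With this continuity of $g'$ secured, the integration is legitimate and the lemma follows. I would also take care that the sign property (iv) is invoked on $\bZ$, which may have zero coordinates (e.g.\ $Z_j = 0$ at $t=0$); this is admissible because the CPGF properties in Remark \ref{prp:CPGF} hold on all of $\bbR^m_+$, including its boundary.
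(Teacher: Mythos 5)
Your proof is correct and takes essentially the same route as the paper: both arguments use Lemma \ref{lm:lm1} to identify $\partial^2 \phi^i(\bx)/\partial x_j \partial x_k$ with the mixed partial $Y_j(\bv^{i*})Y_k(\bv^{i*})\,\partial G^i_{jk}(\bY(\bv^{i*})\circ\bx)$ at the fixed minimizer, sign it nonpositive via property (iv) of Remark \ref{prp:CPGF}, and conclude that $\partial\phi^i/\partial x_k$ is nonincreasing along the $x_j$-direction. Your extra care about regularity (continuity of the unique minimizer so the one-dimensional derivative argument is legitimate) is a refinement of a step the paper asserts without comment, not a departure from its method.
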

\begin{proof}
We define
$
\psi(\bx) = \partial \phi^i(\bx)/\partial x_k.
$
Taking the first-order derivative of $\psi(\bx)$ w.r.t. $x_j$ we get
\begin{align}
\frac{\partial \psi(\bx)}{\partial x_j} &= \frac{\partial^2 \phi^i(\bx)}{\partial x_k \partial x_j}    \nonumber \\
&\stackrel{(a)}{=} \frac{\partial^2 G^i(\bY(\bv^{i*}) \circ \bx)}{\partial x_j \partial x_k}\nonumber \\
&\stackrel{}{=} Y_j(\bv^{i*}) Y_k(\bv^{i*}) \partial^2G^i_{jk}(\bY(\bv^{i*}) \circ \bx) \stackrel{(b)}{\leq} 0
\end{align}
where $(a)$ is from Lemma \ref{lm:lm1} and  $(b)$ is due to Property (iv) of Remark \ref{prp:CPGF}. So, we have ${\partial \psi(\bx)}/{\partial x_j} \leq 0$, implying that $\psi(\bx)$ is monotonically decreasing in $x_j$. Thus, ${\partial \phi^i(\bx + \bbe^j)}/{\partial x_k} \leq {\partial \phi^i(\bx)}/{\partial x_k}$, as desired.
\end{proof}

We are now ready for the main proof of Theorem
\ref{thm:submodularity}. 

\begin{proof}[Proof of Theorem \ref{thm:submodularity}]
From Theorem \ref{thm:monotonicity}, we have
\begin{align}
\phi^i(\bx+\bbe^j) &\geq  \phi^i(\bx) \label{eq:proof-th3-eq01}\\
\phi^i(\bx+\bbe^j + \bbe^k) &\geq \phi^i(\bx+\bbe^k ). \label{eq:proof-th3-eq02}
\end{align}
Moreover,  from Lemma \ref{lm:lm2}
we see that, for any $\bx\in \{0,1\}^m$ and any $j,k\in [m]$ such that $x_j=x_k = 0$,  function $\psi(\bx) = \phi^i(\bx+\bbe^j) - \phi^i(\bx)$ is monotonically decreasing in $x_k$. Thus,
\begin{equation}
\label{eq:proof-th3-eq1}
\phi^i(\bx+\bbe^j) - \phi^i(\bx) \geq \phi^i(\bx+\bbe^j + \bbe^k) - \phi^i(\bx + \bbe^k)\geq 0. 
\end{equation}
Moreover, from \eqref{eq:proof-th3-eq01} and \eqref{eq:proof-th3-eq02}, we  have
\begin{equation}
\label{eq:proof-th3-eq2}   
(1+\phi^i(\bx+\bbe^j))(1+\phi^i(\bx)) \leq (1+ \phi^i(\bx+\bbe^j + \bbe^k))(1+\phi^i(\bx + \bbe^k)).
\end{equation}
Combine \eqref{eq:proof-th3-eq1} and \eqref{eq:proof-th3-eq2} we get
\begin{align}
&\frac{\phi^i(\bx+\bbe^j) - \phi^i(\bx)}{(1+\phi^i(\bx+\bbe^j))(1+\phi^i(\bx))} \geq \frac{\phi^i(\bx+\bbe^j + \bbe^k) - \phi^i(\bx + \bbe^k)}{(1+ \phi^i(\bx+\bbe^j + \bbe^k))(1+\phi^i(\bx + \bbe^k))}\nonumber \\
&\Leftrightarrow \frac{1}{1+\phi^i(\bx)} - \frac{1}{1+\phi^i(\bx+ \bbe^j)} \geq \frac{1}{1+ \phi^i(\bx+\bbe^k )} - \frac{1}{1+ \phi^i(\bx+\bbe^j +\bbe^k )}. \label{eq:th1-eq1}
\end{align}
Now we note that
\[
f^{\WC}(\bx) = \sum_{i\in I} q_i -   \sum_{i\in I}\frac{q_i}{1+ \phi^i(\bx) }.
\]
Thus, from \eqref{eq:th1-eq1} we have
\begin{equation}
\label{eq:th1-eq2}
 f^{\WC}(\bx+\bbe^j) - f^{\WC}(\bx) \geq  f^{\WC}(\bx+\bbe^j +\bbe^k) - f^{\WC}(\bx + \bbe^j). 
\end{equation}
Now, given  $A\subset B\subset [m]$, let $\bx^A$ and $\bx^B$ be the binary representations of $A$, $B$, respectively. From \eqref{eq:th1-eq2} we have, for any $j \in [m]\backslash B$
\[
\begin{aligned}
 f^{\WC}(\bx^A+\bbe^j) - f^{\WC}(\bx^A) &\geq f^{\WC}\left(\bx^A+\bbe^j +\sum_{k\in B\backslash A} \bbe^k\right) - f^{\WC}\left(\bx^A  +\sum_{k\in B\backslash A} \bbe^k\right)\\
 &= f^{\WC}(\bx^B+\bbe^j) -  f^{\WC}(\bx^B),
\end{aligned}
\]
which is equivalent to $f^{\WC}(A \cup \{j\}) - f^{\WC}(A) \geq   f^{\WC}(B \cup \{j\}) - f^{\WC}(B)$, implying the submodularity as desired.  
\end{proof}

\mtien{The submodularity and monotonicity  imply that,} under a cardinality  constraint $|S|\leq C$,  a greedy heuristic can guarantee a $(1-1/e)$ approximation solution to the robust problem (Corollary \ref{coro:co1} below) \citep{Nemhauser1978analysis}. Such a greedy heuristic can  start from an empty set and keep adding locations, one at a time, taking at each step a location that increases the worst-case objective function $f^{\WC}(S)$ the most. The algorithm stops when the maximum capacity is reached,  i.e.,  $|S| = C$. This greedy algorithm runs in $\cO(mC)$.  

\begin{corollary}[Performance guarantee for a greedy heuristic]
\label{coro:co1}
For the robust MCP under a cardinality constraint $|S|\leq C$,  there exists a greedy heuristic that always returns a solution $S^*$ such that
\[
f^{\WC}(S^*) \geq (1-1/e) \max_{|S| \leq C} \left\{ f^{\WC}(S)\right\}.
\]
\end{corollary}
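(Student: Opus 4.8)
The plan is to recognize that this corollary is an immediate consequence of the classical guarantee of \cite{Nemhauser1978analysis} for maximizing a normalized, monotone, submodular set function under a cardinality constraint, so that the whole task reduces to verifying that $f^{\WC}$ satisfies the three required hypotheses. Recall that the Nemhauser--Wolsey--Fisher theorem states: if $g:2^{[m]}\to\bbR$ satisfies $g(\emptyset)=0$, is monotone nondecreasing, and is submodular, then the greedy procedure that starts from the empty set and repeatedly adds the element of largest marginal gain, halting once $C$ elements are chosen, returns a set $S^*$ with $g(S^*)\geq(1-1/e)\max_{|S|\leq C} g(S)$. It therefore suffices to instantiate this with $g=f^{\WC}$.

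Two of the three preconditions are already established: monotonicity is exactly Theorem \ref{thm:monotonicity} (which in fact gives strict, hence non-strict, monotonicity), and submodularity is exactly Theorem \ref{thm:submodularity}. The only remaining item is the normalization $f^{\WC}(\emptyset)=0$. Using the binary representation and evaluating at $\bx=\mathbf{0}$, the adversary's value is
\[
\phi^i(\mathbf{0})=\min_{\bv^i\in\cV_i} G^i(\bY(\bv^i)\circ\mathbf{0})=G^i(\mathbf{0})=0,
\]
where the last equality follows from homogeneity of degree one (Property (ii) of Remark \ref{prp:CPGF}) with scalar $0$, i.e.\ no location is open so the CPGF vanishes. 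Substituting into the objective gives
\[
f^{\WC}(\emptyset)=\sum_{i\in I}q_i-\sum_{i\in I}\frac{q_i}{1+0}=0,
\]
as required.

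With all three hypotheses in hand, the bound $f^{\WC}(S^*)\geq(1-1/e)\max_{|S|\leq C}\{f^{\WC}(S)\}$ follows directly from \cite{Nemhauser1978analysis}. The running-time statement $\cO(mC)$ simply records that the greedy loop runs for $C$ rounds, each scanning at most $m$ candidate locations; note that every marginal-gain evaluation entails solving the associated adversary subproblems, which by Proposition \ref{prp:convexity-G} are convex and hence tractable.

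There is essentially no genuine obstacle: the corollary is a one-step deduction from the two preceding theorems. The only point demanding care is confirming the normalization $f^{\WC}(\emptyset)=0$ so that the \emph{normalized} form of the guarantee applies; since $G^i$ vanishes when no facility is open, this holds automatically, and the remaining ingredients (monotonicity and submodularity) are supplied verbatim by Theorems \ref{thm:monotonicity} and \ref{thm:submodularity}.
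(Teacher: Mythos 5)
Your proposal is correct and follows essentially the same route as the paper, which states the corollary as a direct consequence of Theorems \ref{thm:monotonicity} and \ref{thm:submodularity} combined with the classical greedy guarantee of \citep{Nemhauser1978analysis}. Your explicit verification of the normalization $f^{\WC}(\emptyset)=0$ (via $G^i(\mathbf{0})=0$, which follows from homogeneity together with continuity as the scalar tends to zero) is a detail the paper leaves implicit, but it does not change the argument.
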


If the uncertainty set is rectangular, i.e., each $v^i_j$ can freely deviate within an interval, the following proposition shows that  the robust MCP can be further converted into a deterministic MCP.
\begin{proposition}[Rectangular uncertainty  sets]
If the uncertainty sets are rectangular, i.e., $\cV_i = \{\bv^i|\; \underline{\bv}^i \leq \bv^i \leq \overline{\bv}^i\}$, for all $i\in I$, then the robust problem \eqref{prb:robust-MCP} is equivalent to
\[
 \max_{S \in \cK} \left\{ \sum_{i\in I} q_i -  \sum_{i\in I}\frac{q_i}{1+ G^i(\bY(\underline{\bv}^i)|S) } \right\},    
\]
 which is a deterministic MCP with parameters $\bv^i = \underline{\bv}^i$, $\forall i\in I$. 
\end{proposition}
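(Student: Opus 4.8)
The plan is to reduce the adversary's inner minimization to an explicit corner evaluation by exploiting coordinate-wise monotonicity of each $G^i$. Since the uncertainty sets $\cV_i$ are separable across zones $i\in I$ and the worst-case objective in \eqref{prb:robust-MCP} decomposes as
\[
f^{\WC}(S) = \sum_{i\in I} q_i - \sum_{i\in I} \frac{q_i}{1 + \min_{\bv^i\in\cV_i}\{G^i(\bY(\bv^i)|S)\}},
\]
I would first observe that it suffices to evaluate $\min_{\bv^i\in\cV_i} G^i(\bY(\bv^i)|S)$ separately for each fixed $i$ and $S$, because the minimizations do not interact across zones.

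The key step is to show that $G^i(\bY(\bv^i)|S)$ is nondecreasing in every coordinate $v^i_j$. Working with the binary representation $G^i(\bY(\bv^i)\circ\bx^S)$, the first-order derivative already computed in the proof of Proposition \ref{prp:convexity-G} gives
\[
\frac{\partial G^i(\bY(\bv^i)\circ\bx^S)}{\partial v^i_j} = (x^S_j\, Y^i_j)\, \partial G^i_j(\bY(\bv^i)\circ\bx^S).
\]
Here $x^S_j \geq 0$ and $Y^i_j = e^{v^i_j} > 0$, while the first partial $\partial G^i_j$ of the CPGF is positive by Property (iv) of Remark \ref{prp:CPGF} (the case $k=1$, which is odd). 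Hence this derivative is nonnegative for every $j$, so $G^i(\bY(\bv^i)|S)$ is coordinate-wise nondecreasing in $\bv^i$.

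I would then argue that, because $\cV_i$ is the box $[\underline{\bv}^i,\overline{\bv}^i]$, a coordinate-wise nondecreasing function attains its minimum at the lower corner, giving $\min_{\bv^i\in\cV_i} G^i(\bY(\bv^i)|S) = G^i(\bY(\underline{\bv}^i)|S)$; the coordinates $j\notin S$ are immaterial since they are annihilated by $\circ\,\bx^S$ and contribute a zero derivative. Substituting this identity into the decomposition above yields exactly the stated deterministic MCP objective with parameters $\bv^i = \underline{\bv}^i$.

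As for the main obstacle, there is essentially no serious difficulty in this proposition; the only point that requires genuine care is verifying the sign of the first partial of $G^i$ through the GEV property that odd-order mixed partials are positive, together with the observation that the separability of the rectangular uncertainty set is what permits the coordinate-wise (rather than joint) minimization. Once these two observations are in place, the corner argument is immediate.
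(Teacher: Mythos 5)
Your proposal is correct and follows essentially the same route as the paper's own proof: both compute the derivative $\partial G^i(\bY(\bv^i)\circ\bx^S)/\partial v^i_j = (x^S_j Y^i_j)\,\partial G^i_j(\bY(\bv^i)\circ\bx^S)$, use Property (iv) of Remark \ref{prp:CPGF} to conclude coordinate-wise monotonicity, and then place the minimizer at the lower corner $\underline{\bv}^i$ of the box. Your additional remarks on zone-separability and the annihilated coordinates $j\notin S$ are fine but not needed beyond what the paper states.
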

\begin{proof}
We will simply show that $G^i(\bY({\bv}^i)|S)$ is minimized over $\bv^i\in\cV_i$ at $\bv^i = \underline{\bv}^i$. To prove this,  we  consider the equivalent binary representation  $G^i(\bY({\bv}^i)\circ \bx^S)$  and take its derivatives w.r.t. $v^i_j$, for any $j\in [m]$, to have 
\[
\frac{G^i(\bY({\bv}^i)\circ \bx^S)}{\partial v^i_{j}} = Y_j(\bv^i) x_j \partial G^i_j(\bY({\bv}^i)\circ \bx^S),
\]
and see that the right hand side is non-negative, because $Y_j(\bv^i)\geq 0$, $x_j\geq 0$ and  $\partial G^i_j(\bY({\bv}^i))\geq 0$, where the later is due to Property (iv) of Remark \ref{prp:CPGF}. So, $G^i(\bY({\bv}^i)\circ \bx^S)$ is monotonically increasing in $\bv^i$. Thus,
$$G^i(\bY({\bv}^i)\circ \bx^S) \geq  G^i(\bY(\underline{\bv}^i)\circ \bx^S),\;\forall \bv^i\in\cV_i, $$
implying that $G^i(\bY({\bv}^i)|S)$ is minimized  at $\bv^i = \underline{\bv}^i$ as desired.
\end{proof}


\subsection{Robust MCP under MNL}
Under the MNL choice model, the deterministic MCP  has a concave objective function \citep{BenaHans02}, making it solvable by an exact method such as  the outer-approximation algorithms \citep{Ljubic2018outer, MaiLodi2020_OA}. In the following, we show that it is  the case for our robust model. First, recall that under the MNL model, the CPGF becomes 
\[
\begin{aligned}
G^i(Y(\bv^i)|S) &= \sum_{j\in S} e^{v_{ij}},\text{ and } 
G^i(Y(\bv^i)\circ \bx) = \sum_{j\in [m]} e^{v_{ij}} x_j. \\
\end{aligned}
\]

\begin{proposition}[Concavity under the MNL model]
\label{prp:concave-MNL}
 $f^{\WC}(\bx)$ is concave in $\bx$. 
\end{proposition}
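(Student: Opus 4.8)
The plan is to exploit the special structure of the MNL model, namely that the CPGF is \emph{linear} in $\bx$ for each fixed parameter vector. Concretely, under MNL we have $G^i(\bY(\bv^i)\circ\bx) = \sum_{j\in[m]} e^{v_{ij}} x_j$, so for every fixed $\bv^i$ the map $\bx \mapsto G^i(\bY(\bv^i)\circ\bx)$ is affine (indeed linear) in $\bx$ with nonnegative coefficients $e^{v_{ij}}$. This observation is the crux of the whole argument.

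First I would establish concavity of the adversary's value $\phi^i(\bx) = \min_{\bv^i\in\cV_i} G^i(\bY(\bv^i)\circ\bx)$ defined in \eqref{eq:define-phi}. Since for each $\bv^i$ the objective is affine in $\bx$, $\phi^i$ is a pointwise infimum of a family of affine functions of $\bx$, and any such infimum is concave. This is precisely where the MNL assumption enters: for a general GEV model, $G^i$ is \emph{not} linear in $\bx$, so this argument fails — which explains why concavity is claimed only under MNL and not for the whole GEV family.

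Next I would reduce the concavity of $f^{\WC}$ to a scalar composition. Writing $h(t) = -1/(1+t)$, we have $f^{\WC}(\bx) = \sum_{i\in I} q_i + \sum_{i\in I} q_i\, h(\phi^i(\bx))$. On the relevant domain $t\geq 0$ (note $\phi^i(\bx)\geq 0$ because the coefficients $e^{v_{ij}}$ and the entries $x_j$ are nonnegative), $h$ satisfies $h'(t) = (1+t)^{-2} > 0$ and $h''(t) = -2(1+t)^{-3} < 0$, so $h$ is nondecreasing and concave. Invoking the standard composition rule — a nondecreasing concave function composed with a concave function is concave — each term $h(\phi^i(\bx))$ is concave. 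Since $q_i\geq 0$, the nonnegative combination $\sum_{i\in I} q_i\, h(\phi^i(\bx))$ is concave, and adding the constant $\sum_{i\in I} q_i$ preserves concavity; hence $f^{\WC}$ is concave.

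The argument is short, and the only genuine subtlety is the first step: one must recognize that concavity of $\phi^i$ follows \emph{directly} from linearity of $G^i$ in $\bx$ under MNL, rather than from any second-order computation. An alternative, more laborious route would invoke Lemma \ref{lm:lm1} to express $\nabla^2\phi^i$ through $\nabla^2 G^i$ evaluated at the adversary's optimum and then argue negative semidefiniteness; but under MNL that Hessian is identically zero, so the envelope-based route collapses to the same conclusion. I would therefore present the infimum-of-affine-functions argument as the cleanest path, and I expect no serious obstacle beyond carefully noting nonnegativity of $\phi^i$ so that the monotonicity of $h$ applies on the correct domain.
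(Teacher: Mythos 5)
Your proof is correct, but it takes a genuinely different route from the paper's. The paper keeps the minimization on the outside: it defines $\gamma(\bx,\bV)$ as the deterministic MNL objective, invokes the known concavity of $\gamma(\cdot,\bV)$ for fixed $\bV$ (cited from \cite{BenaHans02}), writes $f^{\WC}(\bx)=\min_{\bv^i\in\cV_i,\forall i}\gamma(\bx,\bV)$, and then verifies by a direct chain of inequalities that a pointwise minimum of concave functions is concave. You instead push the minimization inside each zone: $\phi^i(\bx)=\min_{\bv^i\in\cV_i}\sum_{j}e^{v_{ij}}x_j$ is an infimum of linear functions of $\bx$, hence concave, and you then compose with the increasing concave scalar map $t\mapsto -1/(1+t)$ on $t\geq 0$ and take a nonnegative combination. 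The two routes trade off as follows. Yours is fully self-contained --- it re-derives, rather than cites, the concavity that \cite{BenaHans02} supplies, and it isolates precisely where the MNL hypothesis enters (linearity of the CPGF in $\bx$), which makes transparent why the claim does not extend to general GEV models. The paper's route, by treating the uncertain parameters monolithically, yields a strictly more general conclusion with no extra work: concavity holds for \emph{any} nonempty uncertainty set, not necessarily convex or zone-decomposable --- a point the paper makes explicitly in the remark following its proof, and one your per-zone decomposition cannot reach, since writing $f^{\WC}$ through the $\phi^i$ of \eqref{eq:define-phi} presupposes separability across zones (harmless here, as \eqref{prb:robust-MCP} is already stated in separable form). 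One small caution: your closing aside that the envelope-based route via Lemma \ref{lm:lm1} gives an identically zero Hessian for $\phi^i$ should not be leaned on --- for a curved uncertainty set $\phi^i$ is genuinely nonaffine, so that formula cannot be exact there; your primary infimum-of-affine argument needs no differentiability at all and is the right way to present it.
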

 The claim can be obviously verified based on the fact that minimization will preserve the concavity. That is, if we define (for ease of notation)
\[
\gamma(\bx, \bV) = \sum_{i\in I} q_i -  \sum_{i\in I}\frac{q_i}{1+  G^i(\bY(\bv^i) \circ \bx ) },
\]
then we know that $\gamma(\bx,\bV)$ is concave in $\bx$ \citep{BenaHans02}.
Note that $f^{\WC}(\bx) = \min_{\bv^i \in \bV,\forall i} \gamma(\bx,\bV)$.
To prove the concavity of $f^{\WC}(\bx)$ we will show that for any $\bx^1,\bx^2\in [0,1]^m$ and $\alpha \in[0,1]$, we have $\alpha f^{\WC}(\bx^1) + (1-\alpha) f^{\WC}(\bx^2) \leq  f^{\WC}(\alpha \bx^1 + (1-\alpha)\bx^2)$. This is verified by the following chain of inequalities
\begin{align}
    \alpha f^{\WC}(\bx^1) + (1-\alpha) f^{\WC}(\bx^2)  &= \min_{\bv^i\in \cV_i,\forall i} \left\{\alpha  \gamma(\bx^1, \bV)\right\} + \min_{\bv^i\in \cV_i,\forall i} \left\{(1-\alpha)  \gamma(\bx^2, \bV)\right\} \nonumber\\
    &\leq \min_{\bv^i\in \cV_i,\forall i} \left\{\alpha  \gamma(\bx^1, \bV) + (1-\alpha)  \gamma(\bx^2, \bV)\right\} \nonumber \\
    &\stackrel{(a)}{\leq } \min_{\bv^i\in \cV_i,\forall i} \left\{  \gamma(\alpha \bx^1 + (1-\alpha)\bx^2, \bV)\right\}\nonumber\\
    & = f^{\WC}(\alpha \bx^1 + (1-\alpha)\bx^2)\nonumber
\end{align}
where $(a)$ is due to the concavity of  $\gamma(\bx,\bV)$.

Note that the concavity is preserved with any uncertainty set, not necessarily with convex and customer-wise decomposable sets.  That is, under any nonempty uncertainty set $\cV$ such that the worst-case objective function $$f^{\WC}(\bx) = \min_{\{\bv^1,\ldots,\bv^{|I|}\} \in \cV}  \left\{\sum_{i\in I} q_i -  \sum_{i\in I}\frac{q_i}{1+  G^i(\bY(\bv^i) \circ \bx ) }\right\} $$ is finite, then $f^{\WC}(\bx)$ is  concave in $\bx$. However, under this general setting,  the above minimization problem is difficult to solve, as its objective function is highly non-convex in $\bV = (\bv^i,\;i\in I)$.   

\section{Robust Algorithms}
\label{sec:algo}
For the MCP  under GEV models, \cite{Dam2021submodularity} proposes a local search procedure, named as GGX, which does not only provide a performance-guaranteed solution but  performs  well in practice. On the other hand, if the choice model is MNL,  it is possible to efficiently solve the deterministic MCP  using a multi-cut outer-approximation algorithm \citep{MaiLodi2020_OA}. As shown above, our robust models preserve some main properties of the deterministic versions, i.e., the submodularity and monotonicity for the MCP under GEV models,  and concavity for the MCP under MNL, making the local search and outer-approximation approaches still useful. In the following, we discuss how such approaches can be adapted to handle the robust MCP.   

\subsection{ Local Search for GEV-based Robust MCP}
The submodularity  and monotonicity of the objective function of \eqref{prb:robust-MCP}  shown above are sufficient to guarantee that a simple greedy heuristic can always yield a $(1-1/e)$ approximation. Such a greedy can simply start with a null set and iteratively add locations, one at a time, until the capacity $|S| = C$ is reached. The second and last phases of the GGX algorithm  apply due to that the fact that the objective function  $f^{\WC}(\bx)$ is differentiable in $\bx$. Such derivatives can be computed as, for any $j\in [m]$, 
\begin{align}
\frac{ \partial f^{\WC}(\bx)}{\partial x_j} &= \sum_{i\in I} q_i \frac{\partial \phi^i(\bx)/\partial 
x_j}{(1+\phi^i(\bx))^2} \nonumber \\
 &\stackrel{(a)}{=}   \sum_{i\in I} q_i \frac{ Y_j(\bv^{i*})\partial  G^i_j (\bY(\bv^{i*}) \circ \bx) }{(1+G^i (\bY(\bv^{i*}) \circ \bx))^2}, \label{eq:subprob-eq1}
\end{align}
where $\phi^i(\bx)$ is defined in \eqref{eq:define-phi} and \textit{(a)} is due to Lemma \ref{lm:lm1}. We can  apply the second phase of GGX (i.e., gradient-based local search) to further improve the solution candidate from the  greedy heuristic. At each iteration of this phase, we need to solve the following subproblem
\begin{align}
    \underset{\bx \in \{0,1\}^m}{\max} \qquad &  \nabla f^{\WC}(\overline{\bx})^\transpose \bx 
    & \label{prob:subP}\tag{\sf Sub-Prob}\\
    \text{subject to} \qquad & \sum_{j}x_j = C & \nonumber\\
    & \sum_{j\in [m],\overline{x}_{j}=1}(1-x_{j}) + \sum_{j\in [m],\overline{x}_{j}=0}x_{j} \leq \Delta, &\label{eq:subprob-eq1}
\end{align}
where $\overline{\bx}$ is the current solution candidate, $\Delta>0$ is a positive integer scalar used to define a local area around $\overline{\bx}$ in which we want to find  the next solution candidate, and \eqref{eq:subprob-eq1} is referred to as  a local branching constraint typically used  to exploit the neighborhood of a given binary solution \citep{fischetti2003local}.
The use of  $\Delta$ in the subproblem is motivated by the trust-region method in the continuous optimization literature \citep{Conn2000trust}. 
\citep{Dam2021submodularity} show that their subproblem can be solved efficiently in $\cO(m\Delta)$ but their algorithm requires that all the  coefficients of the objective function of the subproblem are non-negative. The following proposition tells us that it is  the case under our robust model,  making it possible to apply Algorithm 1 in \cite{Dam2021submodularity} to solve \eqref{prob:subP}. 
\begin{proposition}
Given any $\overline{\bx}\in \{0,1\}^m$, all the coefficients of the objective function of \eqref{prob:subP} are non-negative. As a result,  \eqref{prob:subP} can be  solved  in $\cO(m\Delta)$.
\end{proposition}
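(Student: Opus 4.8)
The plan is to observe that the coefficients of the linear objective in \eqref{prob:subP} are exactly the partial derivatives $\partial f^{\WC}(\overline{\bx})/\partial x_j$ evaluated at the current candidate $\overline{\bx}$, so that the claim reduces to showing $\partial f^{\WC}(\overline{\bx})/\partial x_j \geq 0$ for every $j\in[m]$. I would start from the closed-form expression already derived in \eqref{eq:subprob-eq1},
\[
\frac{\partial f^{\WC}(\overline{\bx})}{\partial x_j} = \sum_{i\in I} q_i\, \frac{Y_j(\bv^{i*})\,\partial G^i_j(\bY(\bv^{i*})\circ \overline{\bx})}{\bigl(1+G^i(\bY(\bv^{i*})\circ \overline{\bx})\bigr)^2},
\]
where $\bv^{i*}$ is the adversary's optimal solution for zone $i$ at $\overline{\bx}$, and argue non-negativity summand by summand.

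The second step is to verify that every factor in each summand is non-negative. Here $q_i\geq 0$ since it counts customers; $Y_j(\bv^{i*})=e^{v^{i*}_{ij}}>0$ as an exponential; and the denominator is strictly positive because $G^i(\cdot)\geq 0$ by Property (i) of Remark \ref{prp:CPGF}, whence $1+G^i(\cdot)>0$ and its square is positive. The only non-trivial factor is the first-order partial derivative $\partial G^i_j(\bY(\bv^{i*})\circ \overline{\bx})$, which is strictly positive by Property (iv) of Remark \ref{prp:CPGF} applied with $k=1$ (one index, $k$ odd). Consequently each summand is non-negative and the whole sum is non-negative, establishing the coefficient claim.

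With all coefficients non-negative, the objective of \eqref{prob:subP} meets the structural precondition required by the subproblem solver of \cite{Dam2021submodularity}, so I would invoke their Algorithm 1 directly to obtain the $\cO(m\Delta)$ running time; the cardinality constraint $\sum_j x_j = C$ together with the trust-region constraint bounding the symmetric difference from $\overline{\bx}$ by $\Delta$ is precisely the feasible region their routine is designed for. I do not anticipate a genuine obstacle in this argument: the single load-bearing fact is the sign of the first-order GEV derivative from Property (iv), and once that is in hand the non-negativity is immediate and the complexity follows by appeal to the existing algorithm rather than by any new construction.
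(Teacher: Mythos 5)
Your proposal is correct and follows essentially the same route as the paper: both rely on the derivative formula from \eqref{eq:subprob-eq1} (via Lemma \ref{lm:lm1}), conclude non-negativity from the sign of $\partial G^i_j$ given by Property (iv) of Remark \ref{prp:CPGF}, and then invoke the subproblem algorithm of \cite{Dam2021submodularity} for the $\cO(m\Delta)$ bound. Your write-up is merely more explicit about the non-negativity of each factor (and correctly identifies $\bv^{i*}$ as the adversary's minimizer, where the paper's proof has an argmax typo).
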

\begin{proof}
We simply use Lemma \ref{lm:lm1} and Property \textit{(iv)} of Remark \ref{prp:CPGF} to see that
\[
\frac{ \partial f^{\WC}(\overline{\bx})}{\partial x_j} =  \sum_{i\in I} q_i \frac{ Y_j(\bv^{i*}(\overline{\bx}))\partial  G^i_j (\bY(\bv^{i*}(\overline{\bx})) \circ \overline{\bx}) }{(1+G^i (\bY(\bv^{i*}(\overline{\bx})) \circ \overline{\bx}))^2} \geq 0,
\]
where $\bv^{i*}(\overline{\bx}) = \text{argmax}_{\bv^i} G^i(\bY(\bv^i)\circ \overline{\bx})$. With the positiveness of the coefficients,  \cite{Dam2021submodularity} show that \eqref{prob:subP} can be effciently solved  in $\cO(m\Delta)$.
\end{proof}

The third phase of GGX is based on steps of adding/removing locations, thus it can be applied straightforwardly to the robust problem. We briefly describe our adapted GGX in Algorithm \ref{algo:local-search} below and we refer reader to \cite{Dam2021submodularity} for more details. Here we note that, to compute $f^{\WC}(\bx)$, we need to solve the inner adversary problems $\min_{\bv^i} G^i(\bY(\bv^i) \circ \bx)$, which are convex optimization ones (Proposition \ref{prp:convexity-G}) and can be solved efficiently using a convex optimization solver. Moreover, let $\bv^{i*} = \text{argmax}_{\bv^i \in\cV_i}  G^i(\bY(\bv^i) \circ \bx)$, then using Lemma \ref{lm:lm1} we can compute the first derivatives of $f^{\WC}(\bx)$ used in \eqref{prob:subP} as
in \eqref{eq:subprob-eq1}.

\begin{algorithm}[htb]
    \caption{Local Search} 
    \label{algo:local-search}
    \SetKwRepeat{Do}{do}{until}
    \comments{1: \textbf{G}reedy heuristics}\\
\textit{   - Start from $S = \emptyset$\\
   -  Iteratively add locations to $S$, one at a time, taking  locations that increase $f^\WC (S)$ the most \\
    -  Stop when $|S| =C$ \\}
    \comments{2: \textbf{G}radient-based local search}\\
\textit{- Iteratively solve \eqref{prob:subP} to find new solution candidates\\
- Move to a new candidate solution if it yields a better objective value $f^{\WC}(S)$\\
- If \eqref{prob:subP} yields a worse solution, then reduce the size of the searching area $\Delta$\\}
    \comments{3: \textbf{Ex}changing phase}\\
\textit{- Swap one (or two) locations in $S$ with one (or two) locations in $[m]\backslash S$, taking locations that increase the objective function the most \\
- Stop when no further improvements can be made.}
 \end{algorithm}

\mtien{So far we assume that there is only a cardinality constraint $|S|\leq C$. In a real-life application, one may require  some side constraints on $\bx$ to capture, for instance,  traveling costs between facilities.  In this context, 
 for Algorithm \ref{algo:local-search} to work, one can modify Steps \#1 and \#2 to account for the additional constraints. For Step \#3, we only need to add the side constraints to the subproblem \eqref{prob:subP}. Such additional constraints, however, would break the performance guarantee secured by the greedy heuristic.}

\subsection{Outer-Approximation for the Robust MCP under MNL}
Under the MNL  model, the concavity  shown in Proposition \ref{prp:concave-MNL} suggests that  one can use a multicut outer-approximation algorithm to exactly solve the MCP. The idea is to divide the set of zones $I$ into $\cL$ disjoint groups $\cD_1,\ldots,\cD_{\cL}$ such that  $\bigcup_{l\in [\cL]} \cD_l = I$. We then write the objective function of \eqref{prb:robust-MCP} as 
\[
f^\WC(\bx) = \sum_{l\in [\cL]} \delta_l(\bx),
\]
where
\[
\delta_l(\bx) =  \sum_{i\in \cD_l} q_i -  \sum_{i\in \cD_l}\frac{q_i}{1+  \min_{\bv^i \in\cV_i} \left\{\sum_{j\in [m]} e^{v_{ij}}x_j \right\}},\:\forall l\in[\cL].
\]
A multicut outer-approximation algorithm executes by iteratively adding sub-gradient cuts to a master problem and solve it until reaching an optimal solution. Such a master problem can be defined as  
\begin{align}
    \underset{\bx\in \{0,1\}^m, \btheta \in \bbR^{L}}{\max} \qquad & \sum_{l\in [L]} \theta_l  & \label{prb:sub-OA}\tag{\sf sub-OA}\\
    \text{subject to} \qquad & \sum_{j\in[m]}x_j = C &\nonumber\\
    & \bA \bx -\bB\btheta \geq \bc &\label{eq:sub-OA-ctr1}\\
    &  \btheta \geq 0, &\nonumber
\end{align}
where the linear constraints \eqref{eq:sub-OA-ctr1} are linear cuts of the form 
$\theta_l \leq \nabla \delta_l(\bx) (\bx- \overline{\bx}) +\delta_l(\overline{\bx})$ added to the master problem \eqref{prb:sub-OA} at each iteration of the outer-approximation algorithm with a solution candidate $\overline{\bx}$. The algorithm stops when we find a solution $(\bx^*,\btheta^*)$ such that $\sum_{l\in [L]}\theta_l^* \leq \sum_{l\in [L]} \delta_l(\bx^*)$. 
Here we note that the outer-approximation algorithm 
works with any side linear constraints on $\bx$ and 
always returns an optimal solution after a finite number of iterations due to the concavity of $\delta_l(\bx)$ { and the fact that the feasible set of $\bx$ is finite.}
Similar to \eqref{eq:subprob-eq1}, the gradients of $\delta_l(\overline{\bx})$ can be computed as
\[
\frac{\delta_l(\overline{\bx})}{\partial x_j} = \sum_{i\in \cD_l} \frac{q_i \partial G^i_j(\bY(\bv^{i*}) \circ \overline{\bx})}{\left(1+ G^i_j(\bY(\bv^{i*}) \circ \overline{\bx}) \right)^2},\: \forall j\in [m].
\]
We briefly describe our multicut outer-approximation approach in Algorithm \ref{algo:MOA} below.
\begin{algorithm}[htb]
    \caption{Multicut Outer-Approximation} 
    \label{algo:MOA}
    \SetKwRepeat{Do}{do}{until}
\textit{    Initialize the master problem \eqref{prb:sub-OA}.\\ Choose a small threshold $\tau>0$ as a stopping criteria.\\
    \Do{$\sum_{l\in[L]} \overline{\theta}_l \leq \sum_{l\in [L]}\delta_l(\overline{\bx})+\tau$
    }
    {
    Solve \eqref{prb:sub-OA} to get a solution $(\overline{\bx},\overline{\btheta})$\\
    \If{$\sum_{l\in[L]} \overline{\theta}_l > \sum_{l\in [L]}\delta_l(\overline{\bx})+\tau$}
    {
     Add constraints $\overline{\theta}_l \leq \nabla \overline{\delta}_l(\overline{\bx}) (\bx- \overline{\bx}) +\delta_l(\overline{\bx})$ to \eqref{prb:sub-OA}.
    }}
    Return $\overline{\bx}$.}
\end{algorithm}

It is  possible to use the Branch-and-Cut algorithm proposed in \cite{Ljubic2018outer} to solve the robust MCP under MNL. Our multicut outer-approximation described here is similar to the one
used in \cite{MaiLodi2020_OA} and differs from the  Branch-and-Cut method of \cite{Ljubic2018outer} by the fact that it generates
\mtien{one cut per a group of \mtien{multiple} demand points instead of one cut per every single demand point, and it is a Cutting Plane approach instead of a Branch-and-Cut.}  \cite{MaiLodi2020_OA} show that the Cutting Plane approach is more efficient than the Branch-and-Cut in handling large-scale instances.

\section{Numerical Experiments}
\label{sec:expertiments}
We provide numerical experiments showing the performance of our robust approach in protecting us from worst-case scenarios when the choice parameters are not known with certainty. We first discuss our approach to construct uncertainty sets to capture the issue of choice parameter uncertainties. We then provide experiments with MCP instances under two popular discrete choice models in the  GEV family, namely, the MNL and nested logit models.

\subsection{Constructing Uncertainty Sets}
 We first discuss our approach to build uncertainty sets to capture uncertainty issues when identifying choice parameters in the MCP. In the deterministic setting, it is assumed that there is only one vector of customer choice utilities $\bv^i$ for each zone $i\in I$. However, the real market typically has many different types of customers characterized by, for instance, age or income. The choice parameters may significantly vary across different customer types. Let us assume that, for each zone $i\in I$,  there are $N$ types of customers  with $N$ utility vectors {$\widetilde{\bv}^{i1},...,\widetilde{\bv}^{iN}$}, respectively. We  let $\tau_{i1},...,\tau_{iN}\in [0,1]$ be the actual proportions of the  customer types with $\sum_{n\in [N]}\tau_{in}=1$. If these proportions are known with certainty, then one can define a  mean value vector $\widetilde{\bv}^i =\sum_{n\in [N]} \tau^{in}\widetilde{\bv}^{in}$ and  solve the corresponding deterministic MCP problem. Our assumption here is that these proportions cannot be identified exactly,  but we know that the actual proportions are not too far from some estimated proportions. In this context,  uncertainty sets can be defined as  
 \begin{equation}
\label{eqn:uncertainty_set}
    \cV_i = \left \{\bv^i=\sum_{n\in [N]}\eta_{n}\widetilde{\bv}^{in} \Big| \eta_n\geq 0,\forall n\in [M]; \sum_{n\in [N]}\eta_{n}=1;\quad \text{and}\quad || \pmb{\eta} -\widetilde{\btau}^i|| \leq \epsilon\right\},
 \end{equation}
 where $\widetilde{\btau}^i = \{\widetilde{\tau}^{in},\;n\in [N]\}$ are some estimates of the actual  proportions $\btau^i$ and   $\epsilon\in [0,1]$ represents the ``uncertainty level'' of the uncertainty set. Larger $\epsilon$ values lead to larger uncertainty sets, thus resulting in  more conservative models that may help provide better protection against  worst-case scenarios, but may give a low average performance. In contrast, smaller $\epsilon$ values provide smaller uncertainty sets, which would lead to better average performance but may give a bad performance in protecting against worst-case scenarios. The firm could adjust $\epsilon$ to balance the worst-case protection and average performance. Clearly, if $\epsilon=0$, the uncertainty sets become singleton and the robust MCP becomes a deterministic MCP with mean-value choice utilities, i.e., the actual proportions are known perfectly, or the firm just treats their estimated proportions as the actual ones. On the other hand, if we select  $\epsilon>N$, then
 the uncertainty sets will cover all possible affine combinations of $\{\widetilde{\bv}^i,\;i\in I\}$. This reflects the situation that the firm knows nothing about the actual proportions and  has to ignore the pre-computed values  $\widetilde{\btau}^i$. This way of constructing uncertainty sets is similar to the approach employed in \cite{rusmevichientong2012robust} in the context of assortment optimization under uncertainty.

\subsection{Baseline Approaches and Other Settings}
We will compare our robust approach (denoted as RO) against other baseline approaches under
 the MNL and nested logit models.
The first baseline approach, which is a deterministic one (denoted as DET1), relies on mean-value choice parameters, i.e., we solve the MCP with average values of the choice utilities, i.e.  $\widetilde{\bv}^i = \sum_{n\in [N]}\tau_{in} \widetilde\bv_{in}$.
This is  the case of $\epsilon= 0$, i.e., no uncertainty in the robust model. 
For the second baseline approach (denoted as DET2), we utilize all the vectors of choice parameters
$\widetilde\bv_{in}$, for all $n\in [N]$ and solve the following mixed version of the MCP
\begin{equation}
\label{prob:MixedMNL}
 \max_{S \in \cK}\left\{ \sum_{i\in I}q_i \sum_{n\in [N]} \tau^{in}\left(\sum_{j\in S} P(j|\widetilde\bv_{in}, S)\right) \right\}.    
\end{equation}
So, for the DET1 and DET2 approaches, we acknowledge that there are $N$ customer types in the market but ignore the uncertainty issue. 
Note that the DET2 approach can be viewed as an MCP under a mixed logit model.
We  consider another baseline that accounts for the issue that the proportion of the customer types maybe not be determined with certainty. We perform this approach by sampling over the uncertainty sets $\cV_i$. Then, for each sample of the utilities, we solve the corresponding MCP to get a solution. Since we aim at covering the worst-case scenarios, for each solution, we  sample again from the uncertainty sets to get an approximation of the worst expected captured demand value. We then pick a solution that gives the best worst-case objective value over samples. This way allows us to find solutions that are capable of guaranteeing some protection against worst-case scenarios. \mtien{This approach can be viewed as a sampling-based method to solve the robust problem.
We denote it as SA.}      

For all the MNL and nested logit instances,  we employ the local search (Algorithm \ref{algo:local-search})
 to solve the robust/deterministic MCP, for all the RO, DET1, DET2, and SA approaches, noting that for MNL instances, the outer-approximation (Algorithm \ref{algo:MOA}) can be used as an exact method, but it is generally outperformed by the local search in terms of both solution quality and running time. \mtien{For nested logit instances, the local search is also more convenient to use, as (i) the multicut outer-approximation algorithm  becomes heuristic and does not offer us any guarantees, and (ii)} \cite{Dam2021submodularity} already show that the local search outperforms the multicut outer-approximation algorithm for nested instances. Thanks to the submodularity, the local search  always gives us at least $(1-1/e)$ approximation solutions (Corollary \ref{coro:co1}).

We use instances from the three datasets HM14, ORlib, and NYC to generate instances for our robust problems and we refer the reader to \cite{FLO_Freire2016branch} for a detailed description. These datasets have  been used in previous MCP studies \citep{Ljubic2018outer,MaiLodi2020_OA,Dam2021submodularity}.  
We select
$N = 5$ (i.e., there are 5 customer types in each zone). We then randomly choose the pre-determined proportions $\{\tau_{1},...,\tau_{5}\}$ such that $\sum_{n\in[N]}\tau_n =1$. For each deterministic instance from the datasets (HM14, ORlib, or NYC) {to generate the underlying utility vectors  $\{\widetilde{\bv}^{1i},...,\widetilde{\bv}^{5i}\}$ to construct uncertainty sets, 
we take the set of  utility values $\{v^i_j,\;i\in I, j\in [m]\}$ from the data and sample (randomly and uniformly)  each  element $\widetilde{v}^{ki}_{j}$ in range {$[0.7\times v^{i}_j;1.3\times v^{i}_j]$}, for all $k =1,\ldots,5$.} 
To compare the performances of the RO, DET, and SA approach in protecting us from worst-case scenarios, we vary the uncertainty level $\epsilon$  and compare the robust solutions with those from the DET and SA approaches. More precisely, we will perform the following steps  for each $\epsilon > 0$.
\begin{itemize}
    \item[(i)] Define an uncertainty set as in \eqref{eqn:uncertainty_set}.
    \item[(ii)] For RO, we solve the robust problem \eqref{prb:robust-MCP} and obtain a robust solution $\bx^{\RO}$.
    \item[(iii)]  For DET1, we solve the deterministic MCP \eqref{prob:MCP-1} with the weighted average utilities $\widetilde{\bv} = \sum_{n\in [n]}\tau_{n}\widetilde{v}_{n}$ and obtain a solution $\bx^{\DET}$.
    \item[(iv)] For DET2,  we solve \eqref{prob:MixedMNL} to get a solution $\bx^{\MDET}$.
    \item[(v)] For SA, we take 10 samples from the uncertainty sets to obtain 10 candidate solutions.\footnote{{More samples can be taken, but we restrict ourselves to 10 samples, as the SA approach is  expensive to run as we will show later.}} Then, for each solution obtained,  we use 1000 samples of the utilities,  taken from the uncertainty sets, to get an approximate worst-case objective value.  We then pick the  solution $\bx^{\SA}$ that gives the best  worst-case objective. All the sampling is done randomly and uniformly. 
    \item[(vi)]\mtien{We assess the value  and the  price  of the robust approach by checking the performances of different solutions obtained by the robust, sampling-based, and deterministic approaches in the uncertain environment, and in the nominal (or deterministic) setting. That is, we plug $\bx^{\DET}$, $\bx^{\MDET}$, and $\bx^{\SA}$ into the robust problem to compute relative decreases in terms of worst-case objective, and plug  $\bx^{\RO}$  and $\bx^{\SA}$ into the deterministic MCP to evaluate relative decreases in terms of average objective. These two measurements are often referred to as the value and the price of robustness and have been popularly used to assess robust optimization methods \citep{mehmanchi2020robust,bertsimas2004price} 
    }
 \item[(vii)] \mtien{We additionally evaluate the performances of $\bx^{\RO}$, $\bx^{\DET}$, $\bx^{\MDET}$, and $\bx^{\SA}$ by looking at the empirical distributions of the objective values yielded by different vectors of utilities sampled from the uncertainty sets.} To this end,  we randomly and uniformly sample $2000$ utility vectors $\{\bv^i,\;i\in I\}$  from the uncertainty sets  and compute, for each utility sample, the corresponding objective values  given by $\bx^{\RO}$, $\bx^{\DET}$, $\bx^{\MDET}$, and $\bx^{\SA}$.
This allows us to plot and analyze  \mtien{empirical} distributions of the objective values given by different solutions 
when the \mtien{uncertain} utilities $\bv^i$ vary randomly within the uncertainty sets.  
\end{itemize}

We use MATLAB 2020 to implement and run the algorithms. 
We  use the maximum norm  to define the uncertainty sets in \eqref{eqn:uncertainty_set}. Under this setting, the  adversary's optimization problems can be formulated as convex optimization ones with (strictly) convex objective functions and linear constraints. 
We use the nonlinear optimization solver \textit{fmincon} from MATLAB, under default settings, to solve the adversary's convex optimization problems (i.e., $\min_{\bv^i \in } G^i(\bY(\bv^i)\circ \bx)$). 
The experiments were done on a PC with processor AMD-Ryzen 7-3700X CPU-3.80 GHz and with
16 gigabytes of RAM.


\subsection{Comparison Results}
In this section, we present comparison results for MNL and nested logit instances to assess the value of the robust approach. Computing time comparisons will also be presented.

\subsubsection{MNL Instances} 

\begin{figure}[htb]
  \includegraphics[width=\linewidth]{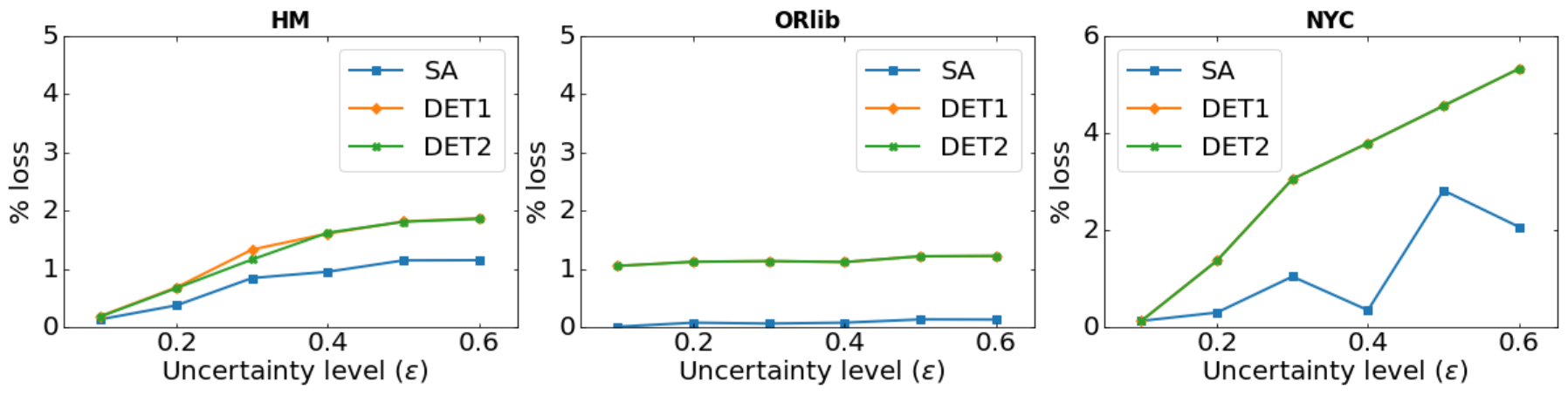} 
  \caption{\mtien{Value of robustness for MNL instances; the performances of DET1 and  DET2 are almost identical.}}
  \label{fig:MNL-VoR}
\end{figure}

\begin{figure}[htb]
  \includegraphics[width=\linewidth]{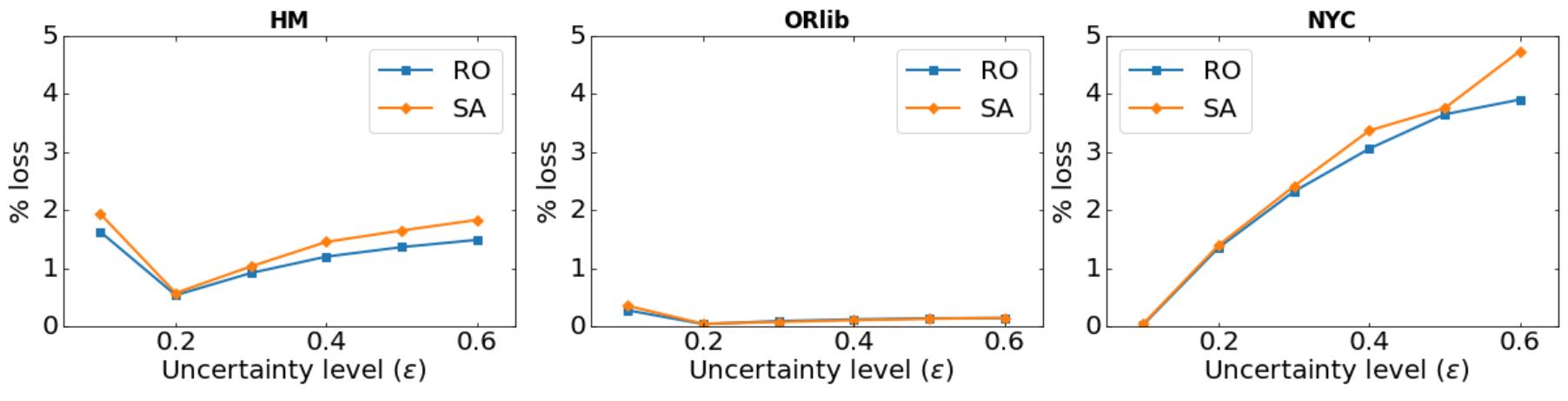} 
  \caption{\mtien{Price of robustness for MNL instances.}}
  \label{fig:MNL-PoR}
\end{figure}

\mtien{We first solve MNL instances using Algorithm \ref{algo:local-search} and plot, in Figure \ref{fig:MNL-VoR} and Figure \ref{fig:MNL-PoR}, the average relative decreases (``\% loss'') w.r.t. the robust  and deterministic MCP for the three datasets (i.e. HM, ORlib, and NYC).  In both figures, higher ``\% loss'' implies worse results. The results generally show that the percentage loss, as expected, seems to increase as the uncertainty level $\epsilon$ increases. The percentage losses for ORlib are remarkably smaller than the losses for the other datasets, and the losses for the largest dataset NYC are, as expected, significantly higher than those from ORlib and HM. Moreover,  Figure \ref{fig:MNL-VoR}  shows that SA performs better than DET1 and DET2 in terms of the value of robustness. 
 Looking at  both figures, it can be  observed that the deterministic solution performs worse in the robust setting than the robust and SA solutions do in the deterministic setting for the ORlib dataset, and performs comparably for the other datasets. }  
 
 
 \mtien{To further assess the performance of the robust approach, we pick instances of size $(|I|,m) = (100, 50)$ and plot, in Figure \ref{fig:MNL}, empirical distributions of the objective values  given by the four approaches and utilities sampled from the uncertainty sets. More plots can be found in Appendix \ref{appd:exp}. }  
 The first  row of the figure shows the histograms for small $\epsilon$ ($\epsilon\leq0.08$). Under these small \textit{uncertainty levels}, the distributions given by the four approaches are quite similar and there is no clear advantage of the robust approach. This is an expected observation, as when $\epsilon$ is small, the corresponding uncertainty sets become small and $\bx^{\RO}$ should be close to $\bx^{\DET}$, $\bx^{\MDET}$, and $\bx^{\SA}$. 
 When we increase $\epsilon$, the second row of Figure \ref{fig:MNL}  shows clear differences between the four approaches. {That is,  the distributions given by $\bx^{\RO}$ always have lower variances, and shorter tails, as compared to the those from the other approaches.}
 In terms of worst-case protection, the RO performs the best, followed by the SA and then the two deterministic approaches.  
 This clearly demonstrates the capability of the RO approach in giving ``\textit{not-too-low}'' expected captured demands. Moreover, the protection against ``low'' objective values seems higher as $\epsilon$ increases. This is also an expected observation, as the RO and SA approaches are essentially designed  for this purpose. We also observe that the two DET approaches perform better in terms of best-case scenarios, indicating a trade-off  of being conservative when making robust decisions under uncertainty. 


\begin{figure}[htb]
  \centering
  \includegraphics[width=\linewidth]{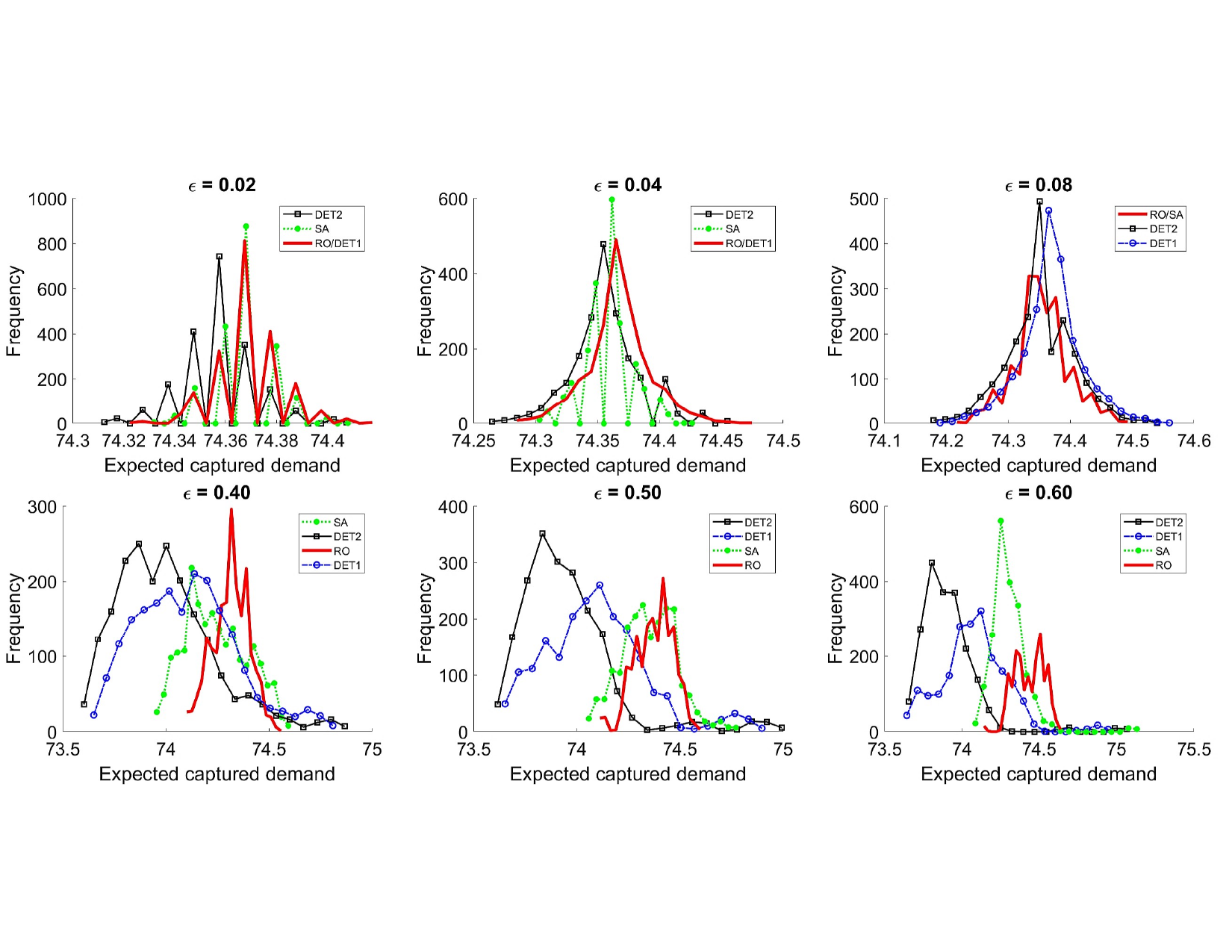}  
  \caption{Comparison of the distributions of the objective values given by solutions from RO, SA, DET1 and DET2 approaches, under the MNL choice model and  instances of size $|I|=100$ and $m = 50$.}
 \label{fig:MNL}
\end{figure}

\mtien{Before moving to nested logit instances, we note that the robust MCP under MNL is relevant to the robust fractional 0-1 program studied in \cite{mehmanchi2020robust}. We provide a comparison with this work in Appendix \ref{sec:comp-meh}.}



\subsubsection{Nested Logit Instances}

We now provide comparison results for nested logit instances. As mentioned, to create the instances, we divide the set of locations into 5 groups of the same size (i.e, $L=5$) and choose the nested logit parameters as $\mu = (1.1,1.2,1.3,1.4,1.5)$, noting that these parameters are just selected at random for a testing purpose and other values can be chosen. \mtien{Similar to the MNL instances, we first report the value and price of robustness in Figures \ref{fig:Nested-VoR} and \ref{fig:Nested-PoR} below. We first notice that the percentage losses reported in Figure \ref{fig:Nested-VoR} are remarkably smaller than those reported for the MNL instances. This would be because we  solve the nested logit instances by the local search algorithm, thus the solutions $\bx^{\RO}$ obtained may not be optimal for the robust problem, affecting the value of robustness. 
Moreover, in analogy to MNL instances,  SA performs slightly better than DET1 and DET2, in terms of the value of robustness.
We also see that the percentage losses for ORlib are smaller, compared to losses for the other datasets. Especially, the percentage losses of the RO  and SA solutions in the deterministic environment almost vanish. The losses for NYC instances, similarly to the MNL case, are also significantly larger than the losses for ORlib and HM. Moreover, while the robust solutions perform better in the deterministic problem than the DET solutions do in the robust setting for the ORlib dataset, they perform slightly worse for the other datasets. }

\begin{figure}[htb]
  \includegraphics[width=\linewidth]{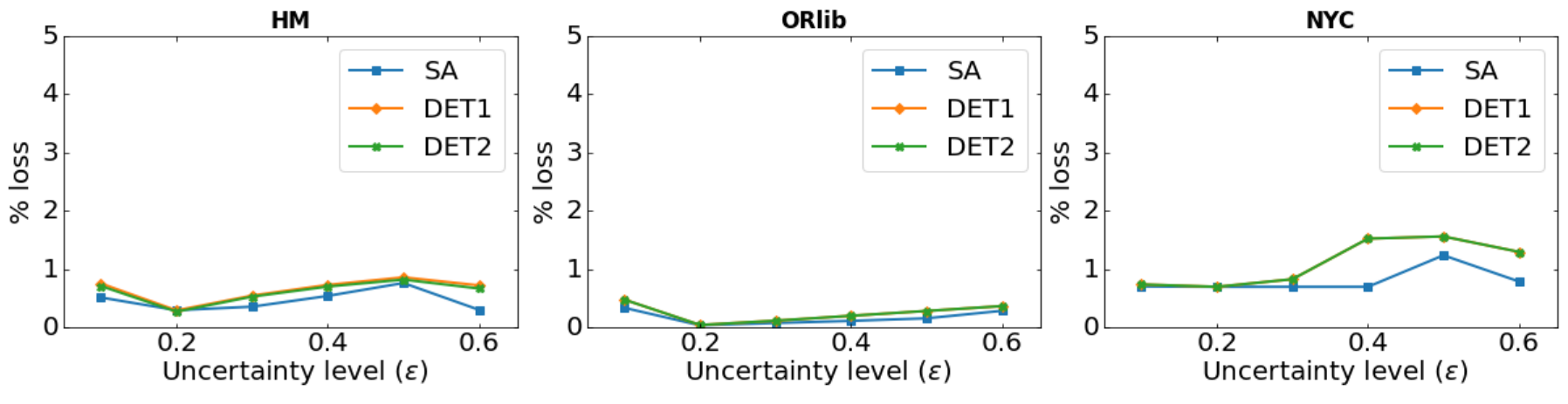} 
  \caption{\mtien{Value of robustness for nested logit instances; the performances of DET1 and DET2  are almost identical.}}
  \label{fig:Nested-VoR}
\end{figure}

\begin{figure}[htb]
  \includegraphics[width=\linewidth]{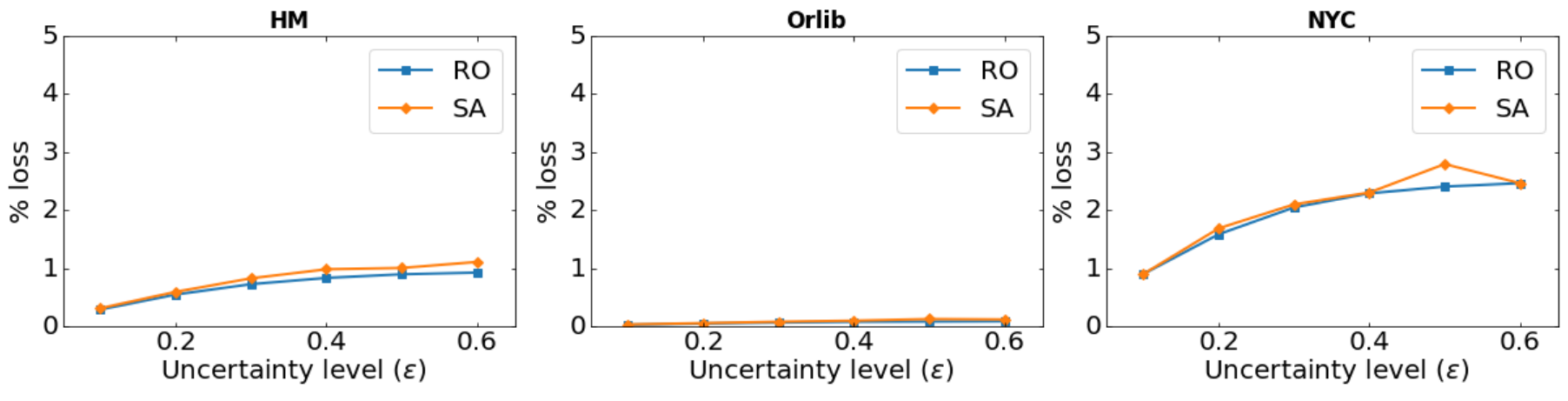} 
  \caption{\mtien{Price of robustness for nested logit instances.}}
  \label{fig:Nested-PoR}
\end{figure}

We also pick instances of size $(|I|,m) = (100, 50)$ to plot empirical distributions of the objective values  given by the four approaches and utilities sampled from the uncertainty sets.  
 In the first row of Figure \ref{fig:Nested},
we plot the histograms for $\epsilon\in \{0.02,0.04,0.08\}$. As we can see, histograms given by RO, DET1, and SA are identical. For $\epsilon = 0.02$ or $\epsilon = 0.04$, the histograms look very similar, but for $\epsilon = 0.08$ we start seeing that the histogram given by RO has { smaller variance and shorter tail}. Moreover, even though the differences are not clear with these small uncertainty set levels, some protection from the RO approach against bad scenarios can still be observed.

\begin{figure}[htb]
  \includegraphics[width=\linewidth]{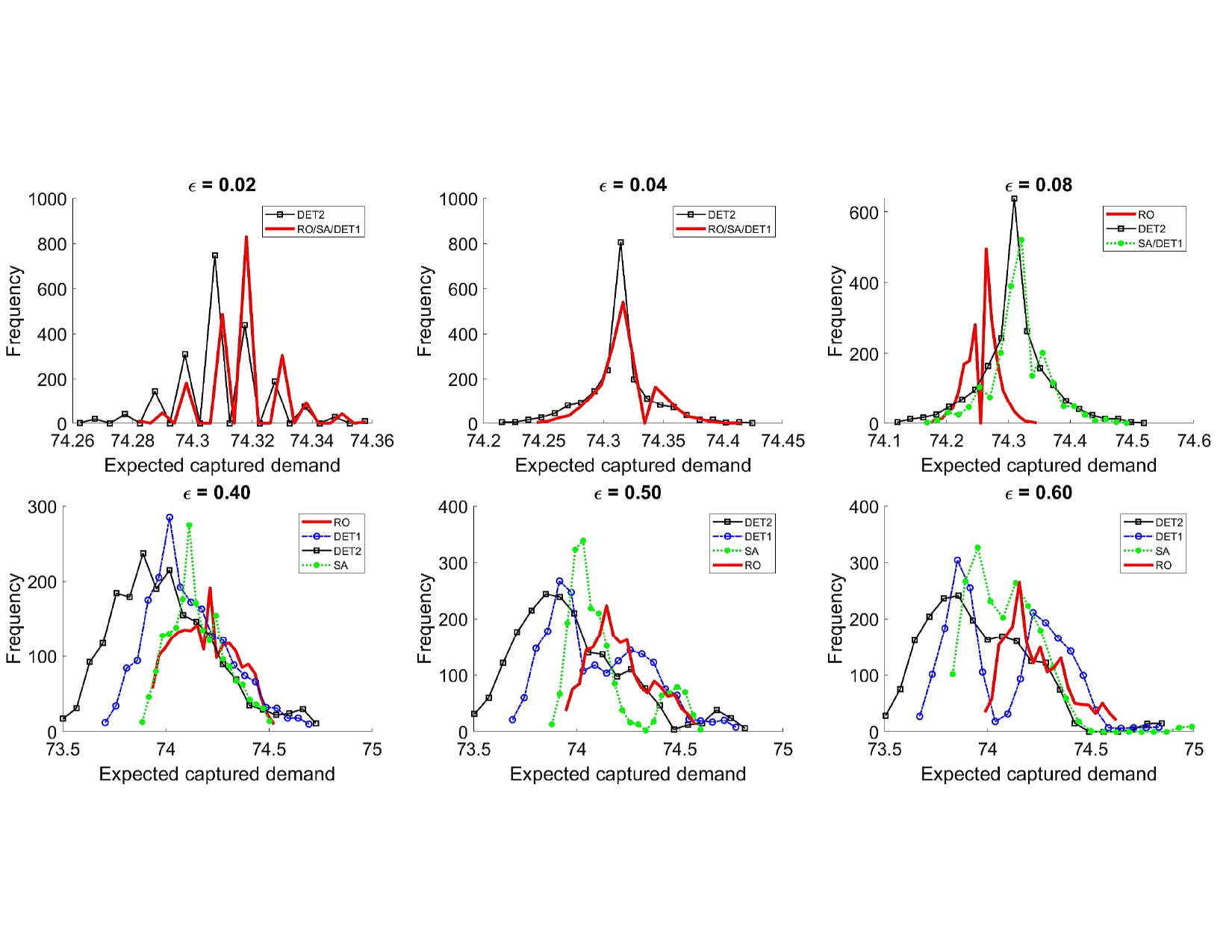}  
  \caption{Comparison of the distributions of the objective values given by solutions from RO, SA, DET1 and DET2 approaches, under the nested logit choice model and instances of size $|I|=100$ and $m = 50$.}
  \label{fig:Nested}
\end{figure}

Histograms with larger $\epsilon$ ($\epsilon \in {\{0.4,0.5,0.6\}}$) are plotted in the second row of Figure \ref{fig:Nested}. There is no surprise, as similar to the MNL instances, the distributions given by the RO approach have small variances, shorter tails, and larger worst-case objective values, as compared to those given by the other approaches. In particular, we can see that the two deterministic approaches can give many low objective values. The SA approach seems to do better in protecting  the objective  value from being too low, and RO performs the best in pushing its worst-case scenarios to higher objective  values. It is worth noting that the DET1 approach (when average values of the choice utilities are made use of) performs better than the mixed deterministic approach DET2 in terms of worst-case protection. A trade-off between having high worst-case objective values and having low best-case objective values can also be  observed. This is also consistent with remarks from prior studies in the context of assortment and pricing optimization where discrete choice models are also employed \citep{rusmevichientong2012robust, LI2019, mai2019robust}. 




\subsubsection{Percentile Ranks of the RO’s Worst-case Objective Values}
We look closely into the distributions of the objective values given by the four approaches to see where the RO's worst-case values are located in other distributions. This would help evaluate how much RO can provide protection when $\epsilon$ increases. To this end, we compute the percentile ranks of the RO's worst-case objective values in the distributions given by the DET and SA solutions. Such a percentile rank can tell us the percentage of the values in the DET1, DET2, or SA distributions that are lower than or equal to the worst objective values given by the RO approach. We plot the percentile ranks of the RO's worst objective values in Figure \ref{fig:PerRank_mnl_1} for both the MNL and nested logit models.
For the MNL instances,   when $\epsilon>0.1$,  the percentile ranks are significant and increasing from about 0\% to almost 100\% for DET2, from 0\% to about 60\%  for DET1, and are quite small for SA (less than 20\%). For the nested logit instances, the percentile ranks for DET1 and DET2 can go up to more than 40\% and can go up to 30\%  for the SA approach. This clearly indicates how much RO can provide protection, as compared to the other approaches, for instance, for the nested model with $\epsilon = 0.6$, the RO's worst objective values are larger than more than 30\% of the SA's sample objective values, and more than 40\% of the DET1's and DET2's sample objective values. It can  be seen that, among the DET1, DET2 and SA approaches, SA performs better than DET1 and DET2, and DET1 is better than DET2, in terms of worst-case protection. 

\begin{figure}[htb]
  \centering
  \includegraphics[width=0.8\linewidth]{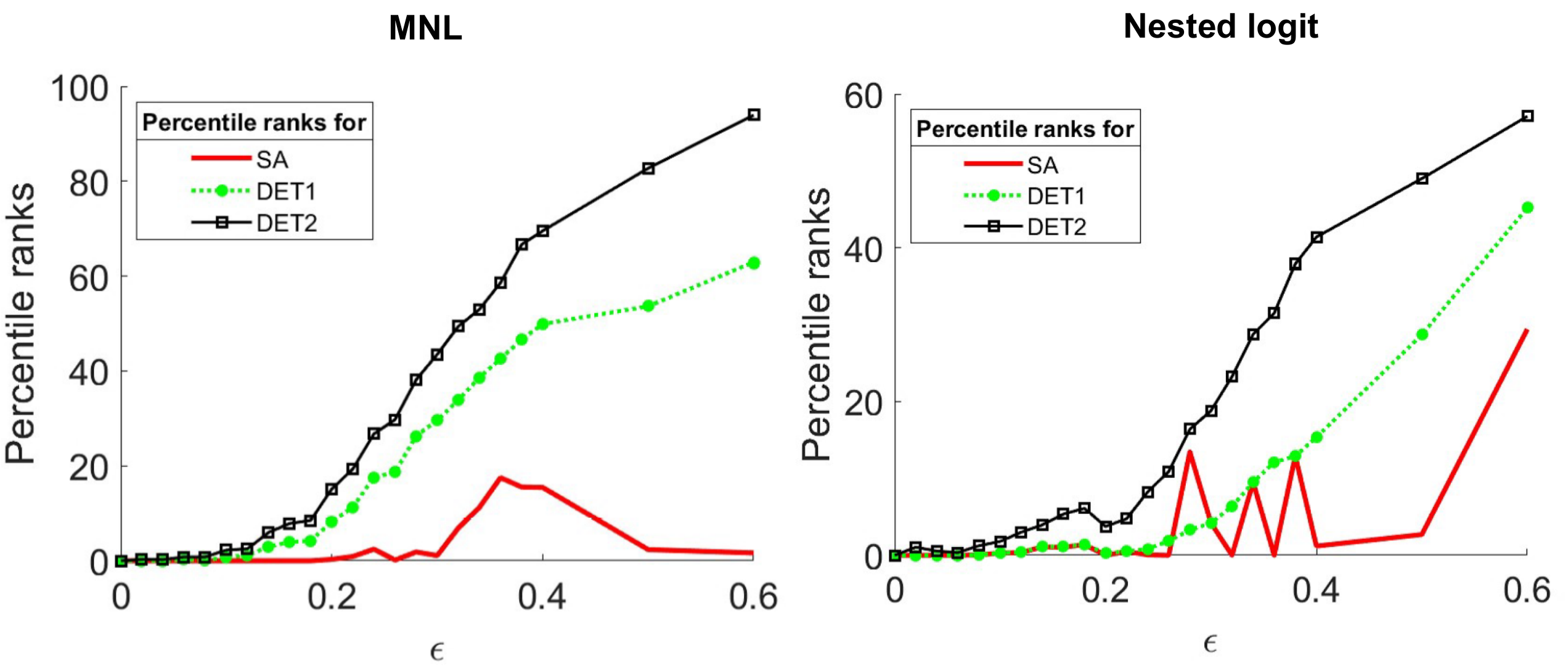}  
  \caption{The percentile ranks of the RO's worst-case values in the distributions given by  the SA, DET1 and DET2 solutions under the MNL and nested choice models for instance of $|I|=100$ and $m=50$.}
 \label{fig:PerRank_mnl_1}
\end{figure}

\subsection{Computing Time Comparison}
We   look at the computing times required by the different approaches to terminate. In Table \ref{tab:computing}, we report the average computing times for the four approaches, under the two discrete choice models (the MNL and nested logit) and with {two} instance sizes. It clearly shows that DET1 require the least amount of running time. It is interesting to see that RO requires less computing times than DET2, except for the instances of size $|I| = 82341$.
 SA is much more time-consuming, as compared to the other three approaches since it requires sampling and solving several deterministic problems. On the other hand, for the largest problem,  RO requires the largest computing times under the MNL model due to the fact that it needs to solve a large number of inner minimization problems (i.e., solving $\min_{\bv^i \in \cV_i } G^i(\bY(\bv^i)\circ \bx)$ for  $i=1,2,...,82341$).

\begin{table}[htb]
\begin{center}
\begin{tabular}{l|l|l|l|l|l|l}
Choice model & $|I|$ & $m$ & RO & SA & DET1 & DET2 \\ \hline
\multicolumn{1}{c|}{\multirow{3}{*}{MNL}} & 100 & 50 & 14.0 & 18.8 & {0.3} & {3.6} \\
\multicolumn{1}{c|}{} & 100 & 100 & 12.4 & 21.6 & {0.3} & {1.5} \\
\multicolumn{1}{c|}{} & 200 & 100 & 26.7 & 25.09 & {0.3} & {13.7} \\ 
\multicolumn{1}{c|}{} & {1000} & 100 & 60.0 & 479.72 & 45.3 & 680.4 \\
\multicolumn{1}{c|}{} & {82341} & 59 & 2861.1 & 885.1 & 0.39 & 6.5 \\\hline
\multirow{3}{*}{Nested logit} & 100 & 50 & 5.6 & 17.0 & {1.4} & {3.3} \\
 & 100 & 100 & 11.3 & 88.8 & {8.2} & {6.6} \\
 & 200 & 100 & 16.40 & 116.4 & {11.0} & {23.3}\\
 & {1000} & 100 & 49.0 & 311.6 & 23.2 &  155.7  \\
 & {82341} & 59 & 2262.4 & 4386.9 & 311.5 & 2586.7
\end{tabular}%
\end{center}
\caption{{Average computing times (in seconds).}}
\label{tab:computing}
\end{table}

As shown in \cite{Dam2021submodularity}, such a local search procedure as in Algorithm \ref{algo:local-search} achieves the best performance in terms of  objective value and computing time, as compared to outer-approximation algorithms or MILP reformulations. Thus,  we  provide here a comparison of  Algorithm \ref{algo:local-search} and Algorithm \ref{algo:MOA}. Note that, for the MCP under  the nested logit model, the objective function is highly non-concave  and the outer-approximation method often performs much worse than the local search heuristic \citep{Dam2021submodularity}. Thus, we only provide a comparison of the two algorithms with MNL instances. 
We use Algorithm \ref{algo:local-search} and Algorithm \ref{algo:MOA} to solve the robust MCP with MNL instances of different sizes with $\epsilon \in \{0.02, 0.04, 0.08, 0.4, 0.5, 0.6\}$
and report the objective values and the computing times in  Table \ref{tab:compare_MOA_LS}. For ease of exposition, we denote Algorithm \ref{algo:local-search} as LS and Algorithm \ref{algo:MOA} as MOA. We observe that the LS and MOA always return the same objective values for all the instances, implying that LS always give us optimal solutions. 
In addition, the computing times required by MOA and LS are  similar for the  instances of sizes $(|I|, m) = (100, 50)$, $(|I|, m) = (100, 100)$ and $(|I|, m) = (200, 100)$. However, for  instances of size $(|I|, m) = (1000, 100)$, LS requires shorter computing times than MOA. In particular, the computing times of LS are always below 40 seconds when MOA always needs more than 60 seconds to finish. For the largest instances $(|I|, m) = (82341, 59)$, LS requires shorter computing times for small $\epsilon$ value (i.e, $\epsilon \leq 0.08$). In contrast, MOA requires shorter computing times for the largest instances with large $\epsilon$ values (i.e., $\epsilon\geq 0.4$). 

\begin{table}[htb]
\centering
\begin{tabular}{|l|l|l|ll|ll|}
\multicolumn{1}{|c|}{\multirow{2}{*}{$|I|$}} & \multicolumn{1}{c|}{\multirow{2}{*}{$m$}} & \multicolumn{1}{c|}{\multirow{2}{*}{$\epsilon$}} & \multicolumn{2}{c|}{MOA} & \multicolumn{2}{c|}{LS} \\ \cline{4-7} 
\multicolumn{1}{|c|}{} & \multicolumn{1}{c|}{} & \multicolumn{1}{c|}{} & \multicolumn{1}{l|}{Objective values} & \multicolumn{1}{l|}{Time} & \multicolumn{1}{l|}{Objective values} & \multicolumn{1}{l|}{Time} \\ \hline
100 & 50 & 0.02 & \multicolumn{1}{l|}{74.33} & \textbf{2.24} & \multicolumn{1}{l|}{74.33} & 2.79 \\
100 & 50 & 0.04 & \multicolumn{1}{l|}{74.29} & \textbf{2.38} & \multicolumn{1}{l|}{74.29} & 2.71 \\
100 & 50 & 0.08 & \multicolumn{1}{l|}{74.22} & \textbf{2.68} & \multicolumn{1}{l|}{74.22} & 2.72 \\
100 & 50 & 0.4 & \multicolumn{1}{l|}{74.01} & \textbf{2.21} & \multicolumn{1}{l|}{74.01} & 2.27 \\
100 & 50 & 0.5 & \multicolumn{1}{l|}{74.01} & \textbf{1.83} & \multicolumn{1}{l|}{74.01} & 1.93 \\
100 & 50 & 0.6 & \multicolumn{1}{l|}{74.02} & \textbf{1.76} & \multicolumn{1}{l|}{74.02} & 1.82 \\
100 & 100 & 0.02 & \multicolumn{1}{l|}{74.63} & \textbf{3.08} & \multicolumn{1}{l|}{74.63} & 3.22 \\
100 & 100 & 0.04 & \multicolumn{1}{l|}{74.60} & 3.46 & \multicolumn{1}{l|}{74.60} & \textbf{3.44} \\
100 & 100 & 0.08 & \multicolumn{1}{l|}{74.55} & 4.04 & \multicolumn{1}{l|}{74.55} & \textbf{3.99} \\
100 & 100 & 0.4 & \multicolumn{1}{l|}{74.40} & \textbf{2.40} & \multicolumn{1}{l|}{74.40} & 2.47 \\
100 & 100 & 0.5 & \multicolumn{1}{l|}{74.40} & \textbf{2.07} & \multicolumn{1}{l|}{74.40} & 2.24 \\
100 & 100 & 0.6 & \multicolumn{1}{l|}{74.39} & \textbf{2.03} & \multicolumn{1}{l|}{74.39} & 2.08 \\
200 & 100 & 0.02 & \multicolumn{1}{l|}{148.71} & \textbf{5.92} & \multicolumn{1}{l|}{148.71} & 6.29 \\
200 & 100 & 0.04 & \multicolumn{1}{l|}{148.64} & \textbf{6.53} & \multicolumn{1}{l|}{148.64} & 6.89 \\
200 & 100 & 0.08 & \multicolumn{1}{l|}{148.51} & \textbf{7.28} & \multicolumn{1}{l|}{148.51} & 7.68 \\
200 & 100 & 0.4 & \multicolumn{1}{l|}{148.15} & \textbf{4.62} & \multicolumn{1}{l|}{148.15} & 4.83 \\
200 & 100 & 0.5 & \multicolumn{1}{l|}{148.14} & \textbf{4.13} & \multicolumn{1}{l|}{148.14} & 4.23 \\
200 & 100 & 0.6 & \multicolumn{1}{l|}{148.14} & \textbf{3.84} & \multicolumn{1}{l|}{148.14} & 4.00 \\
1000 & 100 & 0.02 & \multicolumn{1}{l|}{39236.30} & 60.10 & \multicolumn{1}{l|}{39236.30} & \textbf{29.83} \\
1000 & 100 & 0.04 & \multicolumn{1}{l|}{39228.52} & 78.44 & \multicolumn{1}{l|}{39228.52} & \textbf{31.78} \\
1000 & 100 & 0.08 & \multicolumn{1}{l|}{39213.34} & 82.75 & \multicolumn{1}{l|}{39213.34} & \textbf{36.19} \\
1000 & 100 & 0.4 & \multicolumn{1}{l|}{39153.40} & 75.73 & \multicolumn{1}{l|}{39153.40} & \textbf{29.17} \\
1000 & 100 & 0.5 & \multicolumn{1}{l|}{39150.54} & 70.80 & \multicolumn{1}{l|}{39150.54} & \textbf{24.28} \\
1000 & 100 & 0.6 & \multicolumn{1}{l|}{39149.27} & 69.19 & \multicolumn{1}{l|}{39149.27} & \textbf{22.81} \\
82341 & 59 & 0.02 & \multicolumn{1}{l|}{71682.01} & 2915.80 & \multicolumn{1}{l|}{71682.01} & \textbf{2833.67} \\
82341 & 59 & 0.04 & \multicolumn{1}{l|}{71603.14} & 3034.34 & \multicolumn{1}{l|}{71603.14} & \textbf{2962.36} \\
82341 & 59 & 0.08 & \multicolumn{1}{l|}{71458.12} & 3181.14 & \multicolumn{1}{l|}{71458.12} & \textbf{3109.69} \\
82341 & 59 & 0.4 & \multicolumn{1}{l|}{70996.35} & \textbf{2384.12} & \multicolumn{1}{l|}{70996.35} & 2465.88 \\
82341 & 59 & 0.5 & \multicolumn{1}{l|}{70985.87} & \textbf{2303.91} & \multicolumn{1}{l|}{70985.87} & 2395.21 \\
82341 & 59 & 0.6 & \multicolumn{1}{l|}{70982.53} & \textbf{2237.14} & \multicolumn{1}{l|}{70982.53} & 2326.52
\end{tabular}
\caption{{Comparison of the MOA and LS algorithms.}}
\label{tab:compare_MOA_LS}
\end{table}

\section{Conclusion}
\label{sec:concl}
We have formulated  and studied a robust version of the MCP under GEV models. We have shown that the adversary's minimization problem can  be solved by convex optimization, and the robust model preserves the monotonicity and submodularity from its deterministic counterpart, leading to the fact that a simple greedy heuristic can guarantee $(1-1/e)$ approximation solutions. We have then introduced the  GGX algorithm that works with the robust MCP under GEV, and a multicut outer-approximation algorithm that can exactly solve the robust MCP under MNL. Our numerical experiments based on the MNL and nested logit models have shown the advantages of our model and algorithms in providing protection against worst-case scenarios, as compared to other deterministic and sampling-based baseline approaches.  

Our robust model assumes that the choice parameters can vary uniformly in the uncertainty sets, which might be conservative in some contexts where the distribution of these parameters may be partially known. Therefore, it would be interesting to consider a distributionally robust version of the MCP. 
It is  interesting to look at a  model where the firm and the competitor  make decisions in a Stackelberg  game setting.
Moreover, in this paper we assume that the (expected) number of customer in each zone is fixed and ignore any uncertainty associated with the fact that customers may travel between different zones. Accounting for this would require a new model to predict how customers would move between zones any maybe a new stochastic or robust optimization model for the MCP. This would  be  
an interesting direction for future work. 

As mentioned earlier, our model only targets uncertainties associated  with the deterministic parts of the utilities. There would be other sources of uncertainty  that would come from, for instance, the structure of the GEV model or the distribution of the random parts of the utilities. Models and algorithms addressing such uncertainties would be challenging,  but worth being investigated. \mtien{Moreover, since the objective functions of the MCP and robust MCP under GEV models are all submodular, a MILP approach based on submodular cuts would be interesting for future explorations.} 

\section*{Acknowledgments}
We thank the Editor and the three referees for their detailed and
thoughtful comments and suggestions, which substantially improved the
paper. Dr. Tien Mai is  supported by  the National Research
Foundation Singapore and DSO National Laboratories under the AI Singapore Programme (AISG Award No: AISG2-
RP-2020-017)



\bibliographystyle{plainnat_custom}
\bibliography{reference,refs}

\clearpage

\appendix

\begin{center}
{\Huge Appendix}
\end{center}

\section{Proofs}
\label{appd:proofs}
\subsection{Proof of Lemma \ref{lm:lm1}}
\begin{proof}
We will make use of the assumption that  $\cV_i$ can be defined by a set of constraints $\{g^i_t(\bv^i) \leq 0;\; t = 1,\ldots,T\}$. The Lagrangian expression of the minimization problem is 
\[
\cL(\bx,\bv^i,\bld) = G^i(\bY(\bv^{i}) \circ \bx) + \sum_{t\in[T]} \lambda_t g^i_t(\bv^i)
\]
Let $\bld^*$  be the saddle point of Lagrangian (solution that minimize $G^i(\bY(\bv^{i}) \circ \bx)$ subjects to the constraints), the Envelop theorem \citep{Mirrlees1971exploration} implies that 
\begin{align}
 \frac{\partial \phi^i(\bx)}{\partial x_j} &= \left.\frac{\partial \cL(\bx,\bv^i,\bld)}{\partial x_j} \right\rvert_{{(\bv^i,\bld) = (\bv^{i*},\bld^*)}} \nonumber \\
 &= \frac{\partial G^i(\bY(\bv^{i*}) \circ \bx)}{\partial x_j} +  \left.\frac{\partial \left( \sum_{t=1}^T \lambda_t g^t(\bv^i) \right)}{\partial x_j} \right\rvert_{{(\bv^i,\bld) = (\bv^{i*},\bld^*)}} \nonumber\\
 &\stackrel{(a)}{=} \frac{\partial G^i(\bY(\bv^{i*}) \circ \bx)}{\partial x_j},
\end{align}
where $(a)$ is due to the fact that  $\sum_{t=1}^T \lambda_t g^t(\bv^i) = 0$ (complementary slackness from the KKT conditions) does not involve $\bx$. 
We  take the second derivative of $\phi^i(\bx)$ w.r.t. $x_k$ we have
\begin{align}
   \frac{\partial^2 \phi^i(\bx)}{\partial x_j\partial x_k} &= \frac{\partial^2 \cL(\bx,\bv^{i*},\bld^*)}{\partial x_j \partial x_k}  + \sum_{j\in [m]}\frac{\partial^2 \cL(\bx,\bv^{i}(\bx),\bld^*)}{\partial x_j \partial \bv^{i}_j(\bx)} \frac{\partial v^i_j(\bx)}{\partial x_k} \nonumber\\
   &+ \sum_{t\in [T]}\frac{\partial^2 \cL(\bx,\bv^{i*},\bld(\bx))}{\partial x_j \partial \lambda_t(\bx)} \frac{\partial \lambda_t(\bx)}{\partial x_k},\label{eq:lm1-eq1}
\end{align}
where $(\bv^{i}(\bx),\bld(\bx))$ is the saddle point of the Lagrangian  as functions of  $\bx\in[0,1]^m$. We know that, for any $\bx \in [0,1]^m$ and for any $j\in[m], t\in[T]$, the KKT conditions imply that 
\begin{align}
    \frac{\partial \cL(\bx,\bv^{i}(\bx),\bld(\bx))}{\partial v^i_j(\bx) } &= 0; \label{eq:lm1-eq2} 
\end{align}  
and $\lambda_t(\bx) g^i_t(\bv^i(\bx)) = 0$. There are two cases to consider here.  If $\lambda_t(\bx) = 0$ then $\partial \lambda_t(\bx)/\partial x_k = 0$ and if  
$g^i_t(\bv^i(\bx)) = 0$ then  
\[
\left.\frac{\partial \cL(\bx,\bv^{i*},\bld)}{\partial \lambda_t} \right\rvert_{{\lambda = \lambda(\bx)}} = \left. \left(  \frac{\partial G^i(\bY(\bv^{i*}) \circ \bx)}{\partial \lambda_t} +  g^i_t(\bv^{i*}) \right)\right\rvert_{{\lambda = \lambda(\bx)}} =0.
\]
Thus  
\[
\left.\frac{\partial^2 \cL(\bx,\bv^i,\bld)}{\partial \lambda_t \partial x_j} \right\rvert_{{\lambda = \lambda(\bx)}} = 0. 
\]
Combine the two cases, we have
\begin{equation}
\label{eq:lm1-eq3}
 \sum_{t\in [T]}\frac{\partial^2 \cL(\bx,\bv^{i*},\bld(\bx))}{\partial x_j \partial \lambda_t(\bx)} \frac{\partial \lambda_t(\bx)}{\partial x_k} = 0.
\end{equation}
Combine \eqref{eq:lm1-eq1}, \eqref{eq:lm1-eq2}  and \eqref{eq:lm1-eq3} we have 
\begin{align}
     \frac{\partial^2 \phi^i(\bx)}{\partial x_j\partial x_k} &= \frac{\partial^2 \cL(\bx,\bv^{i*},\bld^*)}{\partial x_j \partial x_k} \nonumber \\
     &=\frac{\partial^2 G^i(\bY(\bv^{i*}) \circ \bx)}{\partial x_j \partial x_k} +  \left.\frac{\partial^2 \left( \sum_{t=1}^T \lambda_t g^t(\bv^i) \right)}{\partial x_j \partial x_k} \right\rvert_{{(\bv^i,\bld) = (\bv^{i*},\bld^*)}} \nonumber\\
     &=\frac{\partial^2 G^i(\bY(\bv^{i*}) \circ \bx)}{\partial x_j \partial x_k}.
\end{align}
We complete the proof.
\end{proof}

\subsection{Proof of Proposition \ref{prp:equivalent-general}}
\begin{proof}
Recall that $Y^i_j(v^i_j) = e^{v^i_j}$, then by dividing the numerator and denominator of each fraction by $e^{v^i_0}$, we write the objective function of 
\eqref{prob:general-1}  as
\begin{align}
     \sum_{i\in I}q_i \frac{\sum_{j\in [m]}Y^i_j(v^i_j - v^i_0) \partial G^i_j(\bY(\bv^i)|S)}{1+ \frac{1}{e^{v^i_0}}G^i(\bY(\bv^i)|S)}.
\end{align}
Moreover, from  Property (ii) of Remark \ref{prp:CPGF} and (ii) of Proposition \ref{prp:CPGF} we have 
\begin{align}
\partial G^i_j(\bY(\bv^i)|S) &= \partial G^i_j(\bY(\bv^i)/e^{v^i_0}|S) = \partial G^i_j(\bY(\bv^i -  v^i_0 \bbe)|S) \nonumber\\
\frac{1}{e^{v^i_0}}G^i(\bY(\bv^i)|S)&=G^i(\bY(\bv^i)/e^{v^i_0}|S) = G^i(\bY(\bv^i -  v^i_0 \bbe)|S)\nonumber, 
\end{align}
 where $\bbe$ is an all-ones vector of size $m$. 
Thus, the inner minimization problem of \eqref{prob:general-1} is equivalent to
\begin{equation}
\label{prob:general-2}
 \min_{\substack{(v^i_0,\bv^i) \in \cV_i\\ \forall i\in I}}\left\{ \sum_{i\in I}q_i \frac{\sum_{j\in [m]}Y^i_j(\widetilde{v}^i_j) \partial G^i_j(\bY(\widetilde{\bv}^i)|S)}{1 + G^i(\bY(\widetilde{\bv}^i)|S)} \right\}.    
\end{equation}
where $\widetilde{\bv}^i$ is a vector  of size $m$ with entries $\widetilde{v}^i_j = v^i_j - v^i_0$, for all $i\in I$. Now, by defining new uncertainty sets $\widetilde{\cV}_i$, $\forall i\in I$, we can  write \eqref{prob:general-2} equivalently as 
\begin{equation}
\nonumber
 \min_{\substack{\widetilde{\bv}^i \in \widetilde{\cV}_i\\ \forall i\in I}}\left\{ \sum_{i\in I}q_i \frac{\sum_{j\in [m]}Y^i_j(\widetilde{v}^i_j) \partial G^i_j(\bY(\widetilde{\bv}^i)|S)}{1 + G^i(\bY(\widetilde{\bv}^i)|S)} \right\},    
\end{equation}
as desired. 
\end{proof}

\subsection{Proof of Proposition \ref{prp:convexity-G} }
\begin{proof}
It is more convenient to use the binary representation $G^i(\bY(\bv^i)\circ \bx)$ to prove the claim. To prove the convexity, we take the second derivatives of $G^i(\bY(\bv^i)\circ \bx)$ w.r.t. $\bv^i$ and prove that the Hessian matrix is positive definite. Let $\rho(\bv^i) = G^i(\bY(\bv^i)\circ \bx)$. We take the first and second order derivatives of $\rho(\bv^i)$ with respect to $\bv^i$ and  have 
\begin{align}
    \frac{\partial \rho(\bv^i)}{\partial v^i_j} &= \frac{\partial G^i(\bY(\bv^i)\circ \bx)}{\partial v^i_j}  = (x_jY_j)  \partial G^i_j(\bY(\bv^i)\circ \bx),\: \forall j\in [m]\nonumber \\
        \frac{\partial^2 \rho(\bv^i)}{\partial v^i_j \partial v^i_k} &=  (x_jY_j) (x_kY_k)  \partial G^i_{jk}(\bY(\bv^i)\circ \bx),\: \forall j,k\in[m], j\neq k \nonumber\\
        \frac{\partial^2 \rho(\bv^i)}{\partial v^i_j \partial v^i_j} &=  (x_jY_j)   \partial G^i_{j}(\bY(\bv^i)\circ \bx) +(x_jY_j)^2   \partial G^i_{jj}(\bY(\bv^i)\circ \bx),\: \forall j\in[m]. \nonumber
\end{align}
So, we define $\overline{\bY} \in \bbR^m$ such that $\overline{Y}_j = e^{v^i_j} x_j$ and  write
\[
\nabla^2 \rho(\bv^i) = \dg(\overline{\bY}) \nabla^2G^i(\overline{\bY})\dg(\overline{\bY}) +\dg(\nabla G^i(\overline{\bY}) \circ \overline{\bY})
\]
where $\nabla^2G^i(\overline{\bY})$ is a matrix of size ($m\times m$) with entries  $\partial G^i_{jk}(\overline{\bY})$, for all $j,k\in[m]$, and $\dg(\overline{\bY})$ is the square diagonal matrix with the elements of vector $\overline{\bY}$ on the main diagonal. We see that $\dg(\nabla G^i(\overline{\bY}) \circ \overline{\bY})$ is positive definite. Moreover, $\dg(\overline{\bY}) \nabla^2G^i(\overline{\bY})\dg(\overline{\bY})$ is symmetric and its $(j,k)$-th element  is $\overline{Y}_j\overline{Y}_k \partial G^i_{jk}(\overline{\bY})$. For $j\neq k$ we have  
$ \partial G^i_{jk}(\overline{\bY}) \leq 0$ (Property \textit{(iv)} of Remark \ref{prp:CPGF}), thus all the off-diagonal entries of the matrix are non-positive. Moreover, from Property \textit{(iii)} of Proposition \ref{prp:CPGF-new}, we see that $\sum_{k\in [m]}\overline{Y}_j\overline{Y}_k \partial G^i_{jk}(\overline{\bY}) = 0$ for any $j\in[m]$, thus each row of the matrix sums up to zero. Using  Theorem A.6 of \cite{DeKlerk2006aspects}, we see that   $\dg(\overline{\bY}) \nabla^2G^i(\overline{\bY})\dg(\overline{\bY})$ is positive semi-definite. Since  $\dg(\nabla G^i(\overline{\bY}) \circ \overline{\bY})$ is positive definite, $\nabla^2 \rho(\bv^i)$ is positive definite,  implying that $\rho(\bv^i)$ is strictly convex in $\bv^i$, as desired.
\end{proof}

\section{\mtien{Comparison with   \cite{mehmanchi2020robust}}}\label{sec:comp-meh}
\cite{mehmanchi2020robust} study a robust fractional 0-1 program based on the  uncertainty structure introduced by \citep{bertsimas2004robust,bertsimas2004price}, which can be well applied to the MCP under MNL. The work of \cite{mehmanchi2020robust} differs from our robust MCP under MNL by the fact that our methods work with any convex uncertainty sets while \cite{mehmanchi2020robust} employ rectangular uncertainty sets where each unknown coefficient (i.e., utility in the MCP context)   lies in a symmetric interval centered on a nominal value. Thus, our algorithms (i.e., both the local search and outer-approximation) can work well with their uncertainty setting. In this section, we consider a robust MCP under the \cite{mehmanchi2020robust}'s uncertainty setting and make a numerical comparison between our algorithms and  their solution approach which is based on a MILP reformulation. 

Under \cite{mehmanchi2020robust}'s uncertainty setting, each uncertainty set $\cV_i$ is defined as  $\cV_i = \{\bV^i| V_{ij} \in [\overline{V}_{ij}-d_{ij};\overline{V}_{ij}+d_{ij}], ~|S_i(\bV)|\leq \Gamma_i\}$ where $\overline{V}_{ij}$ are nominal utilities, coefficients $d_{ij}\geq 0$ denote potential deviation from the nominal utilities, $S_i(\bV)$ is the set of indices of the uncertain parameters $\bV$ whose values are different from the nominal values $\overline{\bV}$. The constraints $S_i(\bV)\leq \Gamma_i$ imply that there are at most $\Gamma_i$ unknown coefficients taking values different from their nominal values.  
The robust MCP under MNL can be formulated as:
\begin{align}
      \max_{\bx \in \{0,1\}^m}& \left\{f^{\WC}(\bx) = \sum_{i\in I} \min_{\bV^i \in \cV_i} \left\{q_i -  \sum_{i\in I}\frac{q_i}{1+ {}\sum_{{j}}{V_{ij}}x_{j} }\right\} \right\}\label{prob:ro2}\tag{\sf RO*} \\
     \text{subject to} & \sum_{j\in [m] }x_{j} = C. \nonumber
\end{align}
Since the inner problem is a minimization problem, we want to maximize the number of times that  $V_{ij}$ is equal to its lower bound. Thus, we can use additional  binary variables $u_{ij}\in\{0,1\},~\forall i\in [I], j\in [m]$ to  write the MCP problem as:
\begin{align}
      \max_{\bx \{0,1\}^m} &\left\{f^{\WC}(\bx) = \sum_{i\in I} q_i -  \sum_{i\in I}\frac{q_i}{1 + \sum_{j} \overline{V}_{ij}x_{j} - \max_{\substack{u_{ij},~\forall j\in [m]\\\sum_{j}u_{ij}\leq \Gamma_i}}\left\{ \sum_{j} d_{ij}x_j u_{ij}\right\}} \right\}\nonumber\\
     \text{subject to} & \sum_{j\in [m] }x_{j} = C.\nonumber
\end{align}
To reformulate the above fractional program as a MILP,  we let $$y_i =   1/\left(1 + \sum_{j} \overline{V}_{ij}x_{j} - \max_{\substack{u_{ij}\in\{0,1\}\\\sum_{j}u_{ij}\leq \Gamma_i}}\left\{ \sum_{j} d_{ij}x_j u_{ij}\right\}\right)$$
and denote $z_{ij} = y_i x_j$. We further linearize these bi-linear terms using  McCormick inequalities \citep{mccormick1976computability} to obtain:
\begin{align}
     \max_{\bx \in \{0,1\}^m} & \left\{f^{\WC}(\bx) = \sum_{i\in I} q_i -  \sum_{i\in I} q_iy_i \right\}\nonumber \\
     \text{subject to} & \sum_{j\in [m] }x_{j} = C\nonumber \\
     &y_j + \sum_{j} \overline{V}_{ij}z_{ij} - \max_{\substack{u_{ij}\in\{0,1\}\\
     \sum_{j}u_{ij}\leq \Gamma_i}}\left\{ \sum_{j} d_{ij}z_{ij} u_{ij}\right\} \geq 1 \nonumber \\
      & z_{ij} \geq y_{i}- \overline{y}_i(1-x_{j}) & \forall i\in I, \forall j\in [m] \nonumber\\
    & z_{ij}\leq y_{i} +\underline{y}_{i}(x_{j}-1) & \forall i\in I, \forall j\in [m]\nonumber\\
    & z_{ij} \leq \overline{y}_{i}x_{j} & \forall i\in I, \forall j\in [m]\nonumber\\
    &  z_{ij} \geq \underline{y}_{i}x_{j} & \forall i\in I, \forall j\in [m]\nonumber\\
    &y_j\geq 0, z_{ij}\geq 0,~ \forall i\in I, \forall j\in [m], \nonumber
\end{align}
where  $\overline{y}_{i},~\underline{y}_{i}$, $\forall i\in I$, are some upper and lower bounds of $y_i$, respectively. Using  Equation (9) of \cite{mehmanchi2020robust}, we take the dual of the inner maximization problem and formulate the robust MCP as the following MILP: 
\begin{align}
     \max_{\bx} &\left\{f^{\WC}(\bx) = \sum_{i\in I} q_i -  \sum_{i\in I} q_iy_i \right\}\label{prob:milp*}\tag{\sf MILP*} \\
     \text{subject to} & \sum_{j\in [m] }x_{j} = C\nonumber \\
     &y_j + \sum_{j} \overline{V}_{ij}z_{ij} - 1 \geq \Gamma_i\alpha + \sum_j p_j \nonumber \\
     &p_j+\alpha \geq d_{ij}z_{ij} \nonumber \\
      & z_{ij}\leq y_{i} +\underline{y}_{i}(x_{j}-1) & \forall i\in I, \forall j\in [m]\nonumber\\
    & z_{ij} \leq \overline{y}_{i}x_{j} & \forall i\in I, \forall j\in [m]\nonumber\\
    &  z_{ij} \geq \underline{y}_{i}x_{j} & \forall i\in I, \forall j\in [m]\nonumber\\
    &\alpha, p_j, y_j, z_{ij}\geq 0,&\forall i\in I, \forall j\in [m], \nonumber
\end{align}
where $\alpha$ and $p_j,~j\in [m]$ are dual variables of the inner maximization problem.

To conduct the experiment, we take  instances from the three datasets HM, ORlib, and NYC. We choose the  deviation coefficients as $d_{ij} = 0.5\overline{V}_{ij}$, $\forall i\in I, j\in [m]$ and  the level of uncertainty $\Gamma_{i} \in [1,2,3,4,5]$ and  $C = 5$.
We set a time limit of 600 seconds for all the methods and  select lower and uppers bounds for $y_i$ as $\overline{y}_i = 1$ and $\underline{y}_i = 0$, for all $i\in I$,  similarly to the setup in  \cite{mehmanchi2020robust}.  
We will compare \eqref{prob:milp*} against our local search algorithm (i.e. Algorithm \ref{algo:local-search}), noting that the MOA can be used as well but it is generally less efficient than the local search algorithm, in terms of both computing time and solution quality. For each step of LS, we compute the worst-case objective value by solving the inner maximization $\max_{\substack{u_{ij}\in\{0,1\}\\
     \sum_{j}u_{ij}\leq \Gamma_i}}\left\{ \sum_{j} d_{ij}x_{j} u_{ij}\right\}$ ($\bx$ is fixed). This can be done efficiently by relaxing the binary variables $u_{ij}$ and using CPLEX to solve the resulting linear program.
Here, we only focus on comparison in terms of solving the robust problem, as the value and price of robustness under this  uncertainty setting were intensively assessed in previous work \citep{mehmanchi2020robust,bertsimas2004price}.   

Table \ref{tab:compare_Meh} below reports the performance of \eqref{prob:milp*} solved by CPLEX and our LS algorithm, in terms of running time and the number of times each method returns better objective values within the time budget. The results generally show that LS outperforms \eqref{prob:milp*} in terms of solution quality and is also faster, especially for large-sized instances.

\begin{table}[htb]
\centering
\begin{tabular}{l|r|r|r|r|r|r}
\multicolumn{1}{c|}{\multirow{2}{*}{Dataset}} & \multicolumn{1}{c|}{\multirow{2}{*}{$|I|$}} & \multicolumn{1}{c|}{\multirow{2}{*}{$m$}} & \multicolumn{2}{l|}{\begin{tabular}[c]{@{}l@{}}\# instances with\\~better objectives\end{tabular}} & \multicolumn{2}{c}{\begin{tabular}[c]{@{}c@{}}Average \\running time (seconds)\end{tabular}}  \\ 
\cline{4-7}
\multicolumn{1}{c|}{}& \multicolumn{1}{c|}{}  & \multicolumn{1}{c|}{}      & \multicolumn{1}{l|}{MILP*} & \multicolumn{1}{l|}{LS (ours)} & \multicolumn{1}{l|}{MILP*} & \multicolumn{1}{l}{LS (ours)}  \\ 
\hline
HM14& 50& 25& 5& 5& 0.4  & 12.1      \\
HM14& 50& 50& 5& 5& 1.1  & 9.6       \\
HM14& 50& 100& 5& 5& 4.3  & 12.7      \\
HM14& 100 & 25& 5& 5& 1.0  & 11.7      \\
HM14& 100 & 50& 5& 5& 3.8  & 11.7      \\
HM14& 100 & 100& 5& 5& 13.5 & 13.6      \\
HM14& 200 & 25& 5& 5& 4.8  & 14.9      \\
HM14& 200 & 50& 5& 4& 22.4 & 16.3      \\
HM14& 200 & 100& 5& 3& 152.4& 21.7      \\
HM14& 400 & 25& 5& 5& 14.9 & 15.5      \\
HM14& 400 & 50& 5& 5& 78.4 & 18.9      \\
HM14& 400 & 100& 3& 4& 600& 24.0      \\
HM14& 800 & 25& 5& 5& 136.2& 17.9      \\
HM14& 800 & 50& 2& 4& 600& 15.1      \\
HM14& 800 & 100& 2& 3& 600& 22.7      \\ 
\hline
ORlib& 50& 25& 5& 4& 27.1 & 11.8      \\
ORlib& 50& 25& 5& 4& 27.1 & 10.2      \\
ORlib& 50& 25& 5& 5& 25.4 & 11.5      \\
ORlib& 50& 25& 5& 5& 25.6 & 10.2      \\
ORlib& 50& 50& 5& 4& 600& 12.4      \\
ORlib& 50& 50& 5& 4& 600& 13.7      \\
ORlib& 50& 50& 5& 4& 600& 12.5      \\
ORlib& 50& 50& 5& 5& 600& 13.8      \\
ORlib& 1000& 100& 0& 5& 600& 16.3      \\
ORlib& 1000& 100& 0& 5& 600& 14.3      \\
ORlib& 1000& 100& 0& 5& 600& 15.0      \\ 
\hline
NYC & 82341          & 59& 0& 5& \multicolumn{1}{l|}{} &     \\ 
\hline
Average & \multicolumn{1}{l}{}   & \multicolumn{1}{l|}{}      & 4.0      & 4.6 & \multicolumn{2}{l}{}
\end{tabular}
\caption{Comparison between the MILP approach proposed in \cite{mehmanchi2020robust} and Algorithm \ref{algo:local-search} for the  MCP under MNL.}
\label{tab:compare_Meh}
\end{table}

\section{Additional Experiments}
\label{appd:exp}

\subsection{Comparing across the MNL and Nested Logit Models}

We will assess the performance of the  RO approach across the MNL and nested logit instances. The aim is to see how the RO performs, as compared to the  other approaches, when the correlation structure of the choice model changes. To this end, we will provide results under the following two settings. First, we  solve the robust MCP problem under MNL and nested logit by Algorithm \ref{algo:MOA} and \ref{algo:local-search} and compare the obtained solutions  by injecting them into the corresponding nested logit instances and comparing  the corresponding distributions of the objective values, in a similar way as in the above sections. The aim is to evaluate gains that we can get if the choice model is well-specified, under our robust settings.   
Second, we  solve MNL instances by the RO, DET1, DET2 and SA approaches. We then take the obtained solutions  to test on the corresponding nested logit instances in the same manner as in the previous sections. In other words, we solve the robust MCP problem under MNL but test the solutions obtained on  nested logit instances.
By doing this, we aim to explore how different approaches protect us from worst-case scenarios when the choice model is misspecified.

{For the first setting, we plot in Figure \ref{fig:mnl_vs_nested_1} the distributions of the 2000 samples of the objective values given by MNL and nested-logit solutions with $\epsilon \in \{0.02,0.04,0.08, 0.4,0.5,0.6\}$, for instances of size $|I|=100$ and $m=50$.
The figure clearly shows that the nested-logit solutions always return better objective values than those given by the MNL solutions. Moreover, the difference seems smaller as $\epsilon$ increases. }

\begin{figure}[htb]
  \centering
  \includegraphics[width=1.0\linewidth]{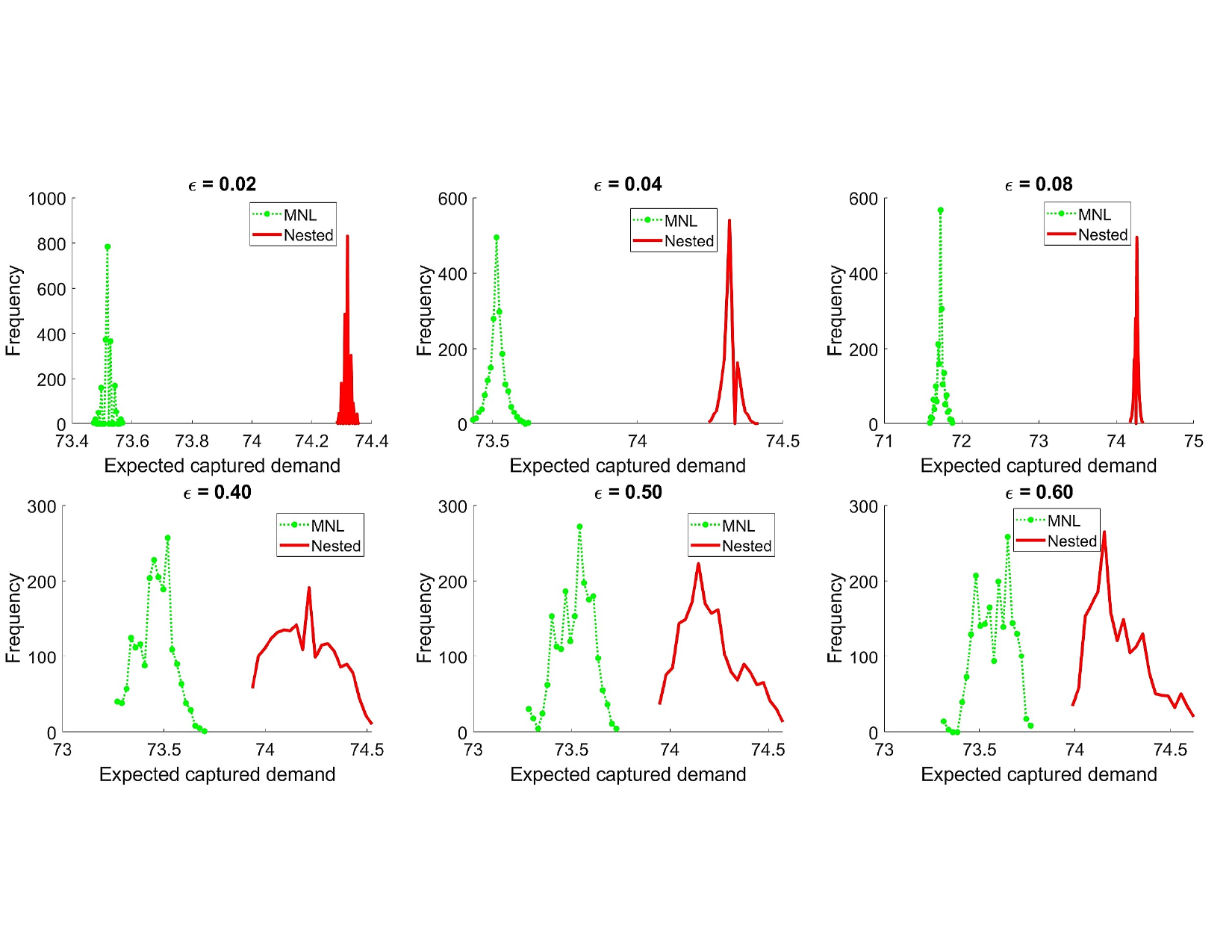}  
  \caption{Comparison of the distributions of the \textit{nested-logit} objective values given by solutions from the MNL and the nested logit model for instance of $|I|=100$ and $m=50$.}
 \label{fig:mnl_vs_nested_1}
\end{figure}

For the second setting, in Table \ref{table1}, we report the averages of 2\% of the worst-case nested logit objective values for each $\epsilon$ value. It is quite clear that RO is not able to retain the same advantages as in the cases considered above, especially for some large-scale instances where the nested correlation structures become more complex. The SA and DET2 approaches seem to provide better protection in this case, suggesting that more investigations would be needed if the correlation structure of the choice model is not well specified. 

In summary, if  the choice model is well-specified in terms of correlation structure,  our results show gains from our robust model in protecting decision-makers from expected user demand
that would be too low. The histograms given by the robust approach have high peaks, small
variances, and high worst-case values, as compared to their deterministic and
sampling-based counterparts. The sampling-based SA approach can provide some protection as well, but it is much more expensive than the RO. 
If the correlation of the choice model is not well specified, we observe that the RO is limited in maintaining the same advantage, suggesting that more investigations, or possibly, new robust models would be needed to address such a misspecification issue. This is out-of-scope of the paper  and we keep this for future work. 


\begin{table}[]
\centering
\begin{tabular}{l|l|l|l|l|l|l}
$|I|$   & $m$   & $\epsilon$    & RO & DET1 & DET2 & SA\\ \hline
100   & 50  & 0.02 & 73.48& 73.48& 73.85& \textbf{73.87}    \\
100   & 50  & 0.04 & 73.45& 73.45& \textbf{73.82}    & 72.95\\
100   & 50  & 0.08 & 71.62& 73.38& \textbf{73.73}    & 72.91\\
100   & 50  & 0.40 & 73.27& 72.80& 73.13& \textbf{73.53}    \\
100   & 50  & 0.50 & \textbf{73.29}  & 72.78& 73.13& 71.91\\
100   & 50  & 0.60 & \textbf{73.37}  & 72.79& 73.16& 72.55\\
100   & 100 & 0.02 & 69.51& 69.51& \textbf{73.12}    & \textbf{73.12}    \\
100   & 100 & 0.04 & 69.50& 69.50& \textbf{73.09}    & \textbf{73.09}    \\
100   & 100 & 0.08 & 69.47& 69.47& \textbf{73.05}    & \textbf{73.05}    \\
100   & 100 & 0.40 & 70.48& 69.34& 72.78& \textbf{72.94}    \\
100   & 100 & 0.50 & 70.51& 69.36& \textbf{72.81}    & 71.01\\
100   & 100 & 0.60 & 70.55& 69.40& 72.85& \textbf{72.93}    \\
200   & 100 & 0.02 & 140.63          & 140.63 & 140.63 & 140.63 \\
200   & 100 & 0.04 & 140.56          & 140.56 & 140.56 & 140.56 \\
200   & 100 & 0.08 & \textbf{140.41} & 140.38 & 140.38 & 134.13 \\
200   & 100 & 0.40 & \textbf{145.09} & 139.10 & 139.10 & 133.83 \\
200   & 100 & 0.50 & 145.15          & 138.89 & 138.89 & 140.17 \\
200   & 100 & 0.60 & \textbf{140.23} & 138.71 & 138.71 & 134.11 \\
1000  & 100 & 0.02 & 38104.29        & 38266.32          & 38104.29          & 38104.29          \\
1000  & 100 & 0.04 & 38099.74        & 38260.89          & 38099.74          & \textbf{38831.59} \\
1000  & 100 & 0.08 & 38087.11        & 38244.76          & 38087.11          & \textbf{38823.69} \\
1000  & 100 & 0.40 & 38281.47        & 38135.82          & 38008.28          & \textbf{38963.94} \\
1000  & 100 & 0.50 & 38291.25        & 38124.55          & 38003.93          & \textbf{38362.69} \\
1000  & 100 & 0.60 & 38300.33        & 38114.96          & 38002.34          & \textbf{39164.15} \\
82341 & 59  & 0.02 & 68497.10        & \textbf{70338.96} & \textbf{70338.96} & 68497.10          \\
82341 & 59  & 0.04 & 68442.18        & \textbf{70246.72} & \textbf{70246.72} & 68442.18          \\
82341 & 59  & 0.08 & 68345.19        & \textbf{70076.93} & \textbf{70076.93} & 68345.19          \\
82341 & 59  & 0.40 & 67974.98        & 68772.24          & 68772.24          & \textbf{69141.38} \\
82341 & 59  & 0.50 & 67999.14        & \textbf{68579.18} & \textbf{68579.18} & 67999.14          \\
82341 & 59  & 0.60 & 68069.08        & \textbf{68442.86} & \textbf{68442.86} & 68069.08         
\end{tabular}
\caption{{Comparison of {nested logit} objective values  given by solutions obtained by solving MNL instances. }}
\label{table1}
\end{table}

\subsection{Objective Value Distributions}

\paragraph{Instances of  $|I|=100$ and $m=100$. }
Figure \ref{fig:100-100-mnl} shows the histograms given by the four approaches with MNL instances. When $\epsilon$ is small, the histograms are very similar across the four approaches. When $\epsilon\geq 0.4$, in analogy  to the experiments shown in the main part of the paper, the histograms given by the RO solutions have small variance and shorted left tails, indicating the capability of RO to cover too-low expected demand values. 

Figure \ref{fig:100-100-nested} shows comparison results for nested logit instances, where  the histograms from the four approaches are identical for $\epsilon\leq 0.08$. When $\epsilon\geq 0.4$,  the superiority of RO in terms of worst-case protection becomes much clearer. We  note that RO seems to give better protection with nested logit instances, as compared to the MNL instances shown in Figure \ref{fig:100-100-mnl}.

In Figure \ref{fig:100-100-ranks-mnl-nested} we plot the percentile ranks of the RO's worst-case objective values in the distributions  given by the other approaches. We see that the ranks only become significant with $\epsilon>0.4$ for MNL instances and with $\epsilon>0.2$ for nested logit instances. The solutions given by DET1 and DET2 are identical for these testing instances.  
We  see that the percentile ranks are higher for nested logit instances, indicating that the RO would give better protection under the nested logit.     


\begin{figure}[htb]
  \includegraphics[width=\linewidth]{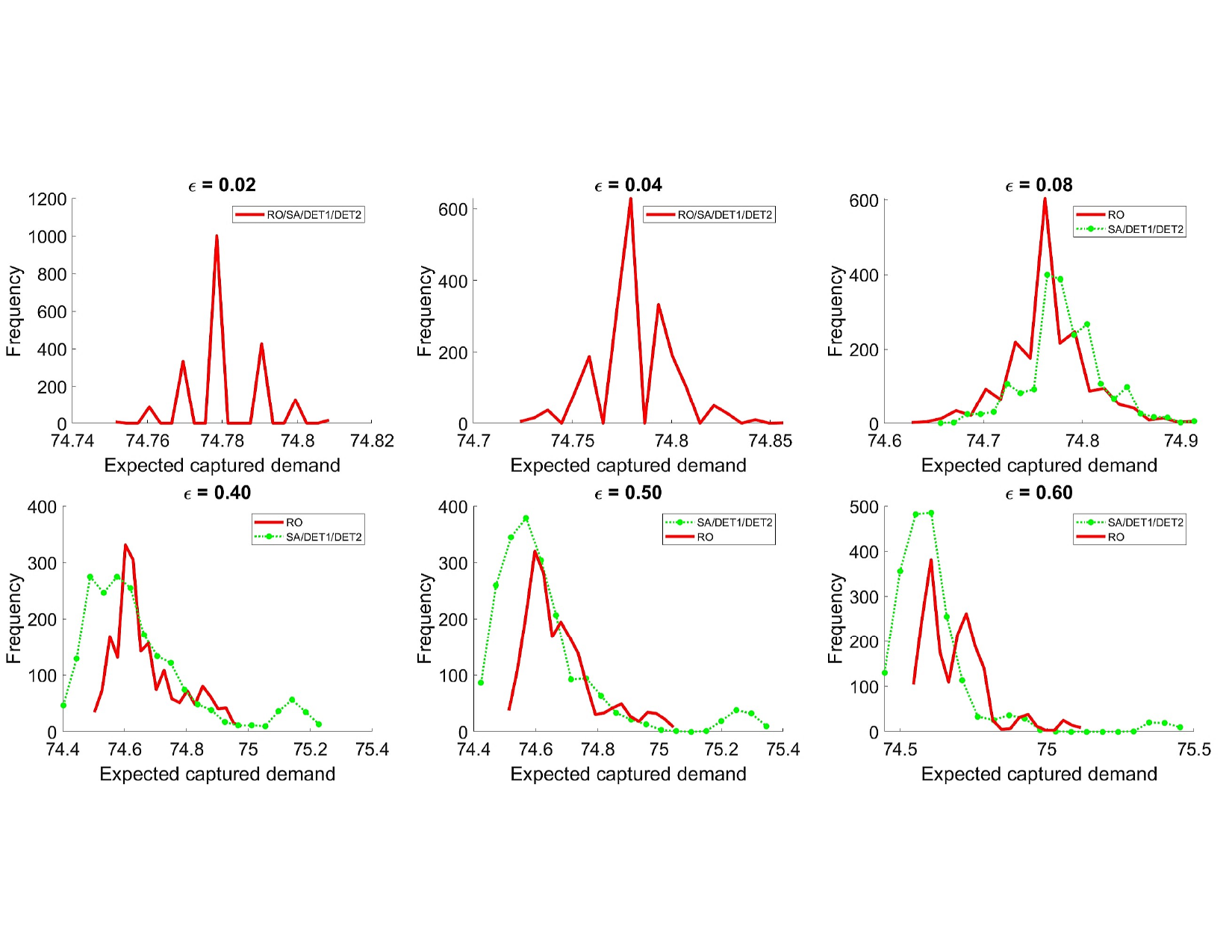}  
  \caption{Comparison results between the distributions of the objective values given by solutions from the RO, SA, DET1, and DET2 approaches, under the MNL choice model and with instances of size $|I|=100$ and $m = 100$.}
  \label{fig:100-100-mnl}
\end{figure}

\begin{figure}[htb]
  \includegraphics[width=\linewidth]{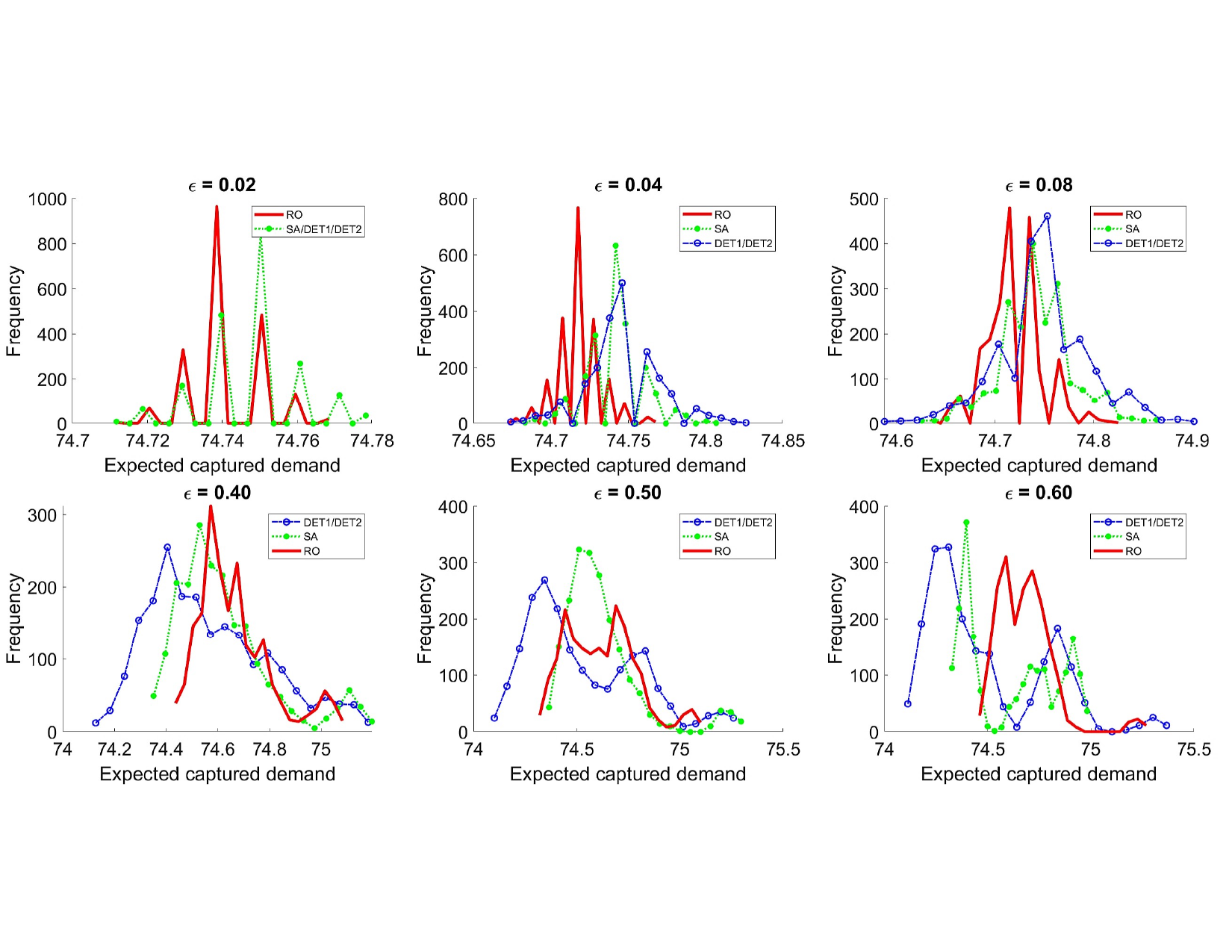}  
  \caption{Comparison of the distributions of the objective values given by solutions from RO, SA, DET1, and DET2 approaches, under the nested choice model and with instances of size $|I|=100$ and $m = 100$.}
  \label{fig:100-100-nested}
\end{figure}

\begin{figure}[htb]
  \centering
  \includegraphics[width=0.8\linewidth]{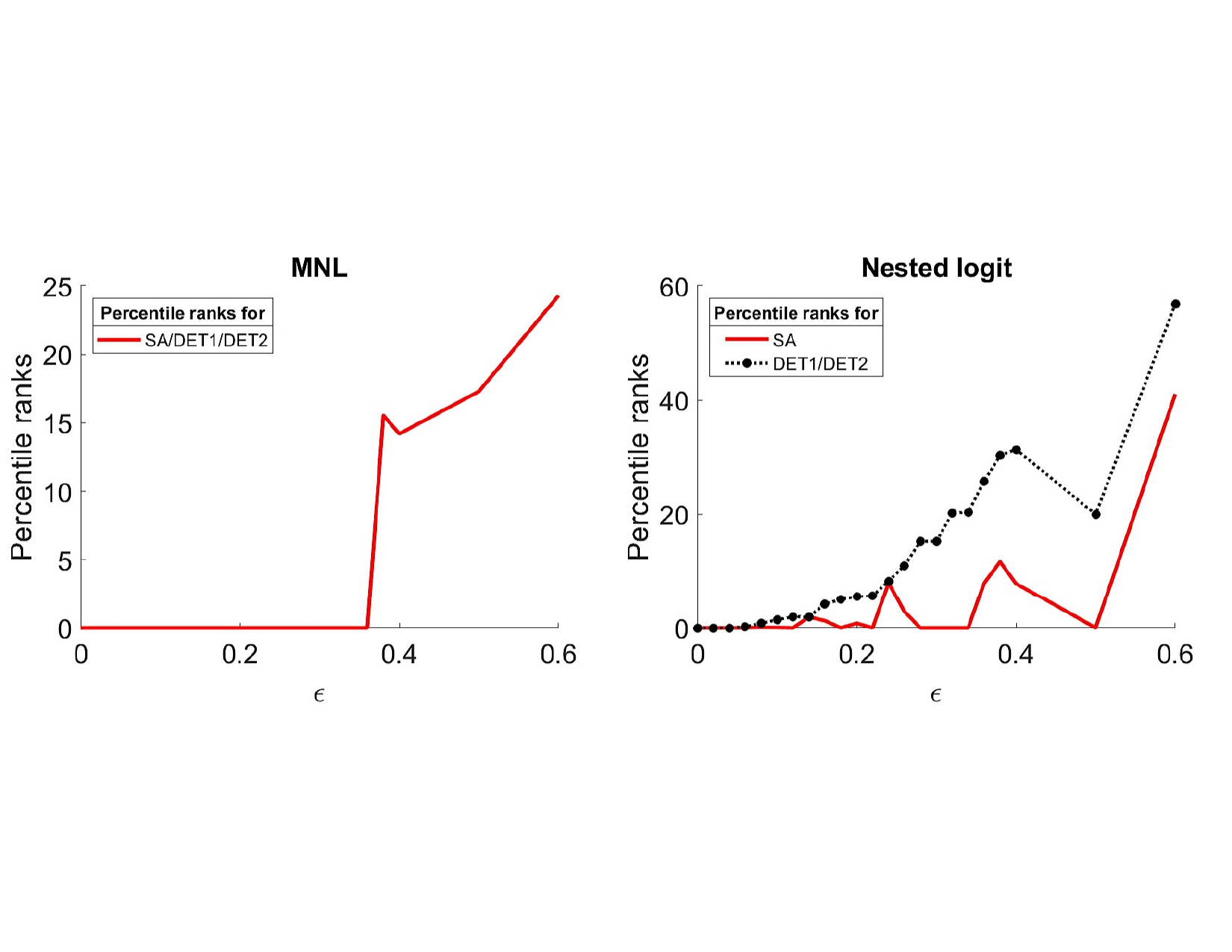}  
  \caption{The percentile ranks of RO worst value in the distributions given by the SA, DET1, and DET2 solutions under the MNL choice model for instance with $|I|=100$ and $m=100$.}
 \label{fig:100-100-ranks-mnl-nested}
\end{figure}


\paragraph{Instances of  $|I|=200$ and $m=100$.}

Figure \ref{fig:200-100-mnl} shows the histograms given by the four approaches for MNL instances, where  the ability of RO to provide protection is clearly shown. When $\epsilon\geq 0.4$, the histograms given by RO always have higher peaks, shorter tails, and higher worst-case objective values. The trade-off of being robust can also be seen. Figure \ref{fig:200-100-nested} shows the histograms  for nested logit instances. Figure \ref{fig:200-100-ranks} shows the percentile ranks of the RO's worst-case objective values. In analogous to the previous experiments, the ability of RO in protecting the decision-maker from low objective values is clearly demonstrated. 

\begin{figure}[htb]
  \includegraphics[width=\linewidth]{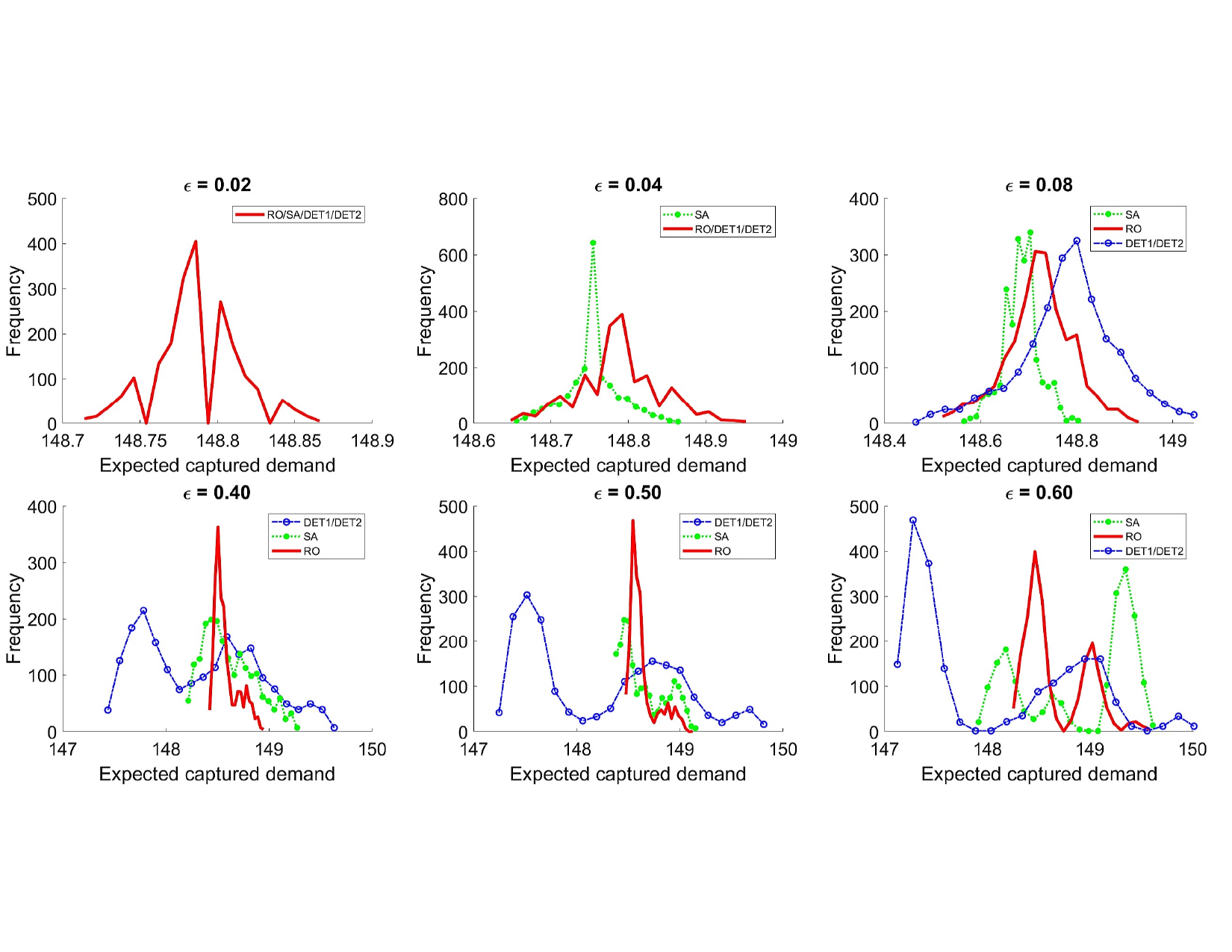}  
  \caption{Comparison between the distributions of the objective values given by solutions from RO, SA, DET1, and DET2 approaches, under the MNL choice model and with instances of size $|I|=200$ and $m = 100$.}
  \label{fig:200-100-mnl}
\end{figure}

\begin{figure}[htb]
  \includegraphics[width=\linewidth]{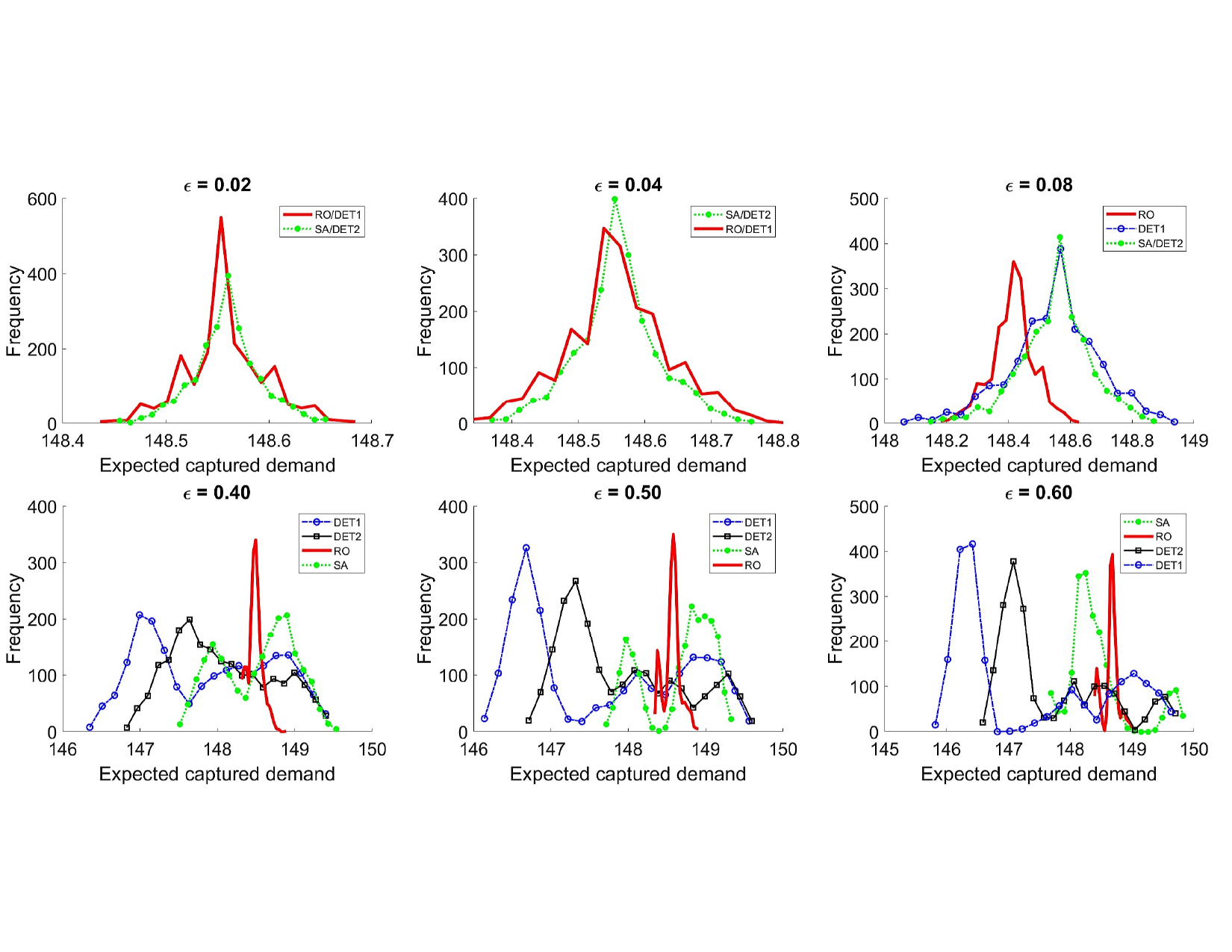}  
  \caption{Comparison between the distributions of the objective values given by solutions from RO, SA, DET1, and DET2 approaches, under the nested logit choice model and with instances of size $|I|=200$ and $m = 100$.}
  \label{fig:200-100-nested}
\end{figure}

\begin{figure}[H]
  \centering
  \includegraphics[width=0.8\linewidth]{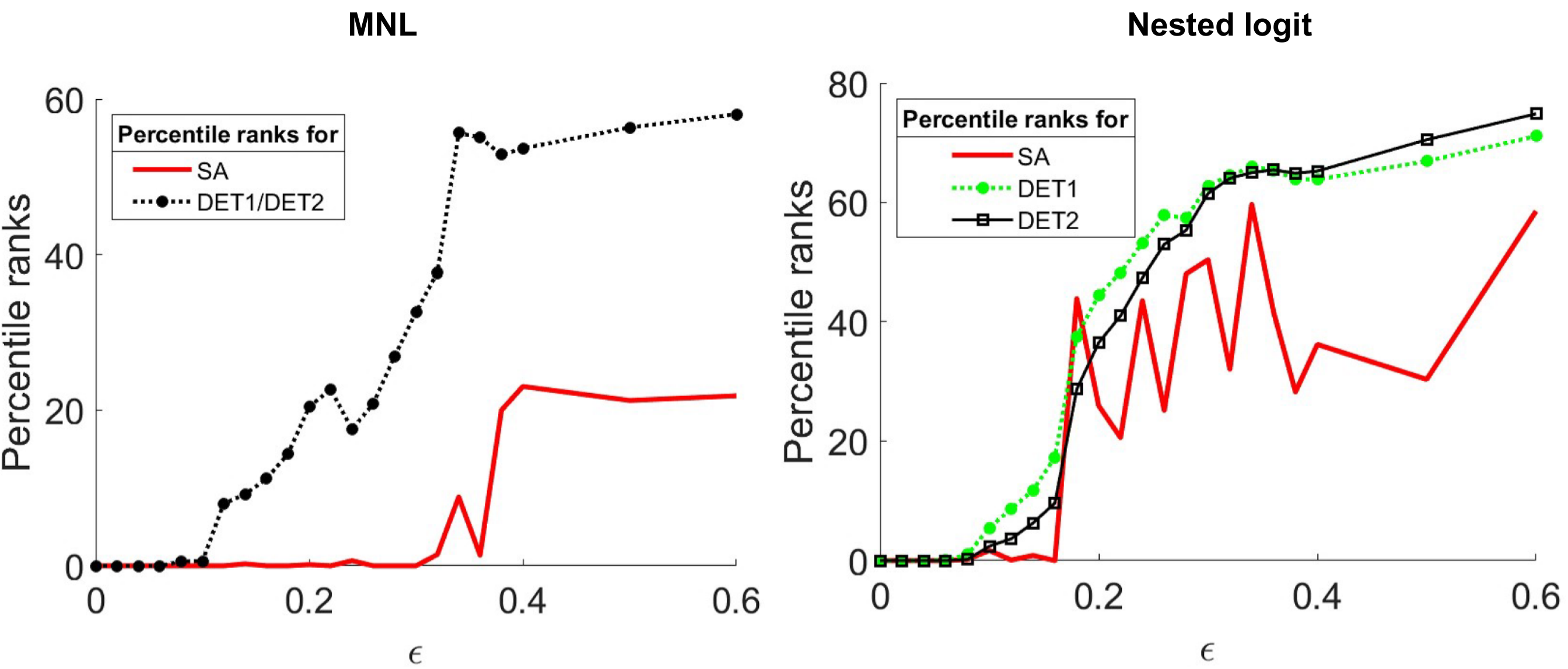}  
  \caption{The percentile ranks of RO worst value in the distributions given by the SA, DET1, and DET2 solutions under the nested logit choice model for instance with $|I|=200$ and $m=100$.}
 \label{fig:200-100-ranks}
\end{figure}


\paragraph{{Instances of $|I|=1000$ and $m=100$}.}
Figure \ref{fig:1000-100-mnl} shows the histograms given by the four approaches under the MNL choice model. The difference between the histograms of the four approaches is not clear when $\epsilon$ is small. With $\epsilon \geq 0.4$, the histograms given by the RO solutions have very small variance, shorted left tails, and  particularly high peaks, as compared to the  other approaches. Figure \ref{fig:1000-100-nested} below shows the histograms given by the  four approaches under the nested logit choice model. The histograms of RO, SA, and DET1 are similar at $\epsilon \in \{0.02, 0.04\}$. When $\epsilon$ becomes larger, RO gives histograms of higher peaks, smaller variances, and shorter tails, as compared to other approaches. It clearly shows the ability of RO in protecting the decision-makers against worse-case situations. In Figure \ref{fig:1000-100-ranks}, we plot the percentile ranks of the RO's worst objective value. We see that the percentile ranks only become significant when $\epsilon >0.26$ for the MNL and $\epsilon > 0.2$ for the nested logit instances. 

\begin{figure}[H]
  \includegraphics[width=\linewidth]{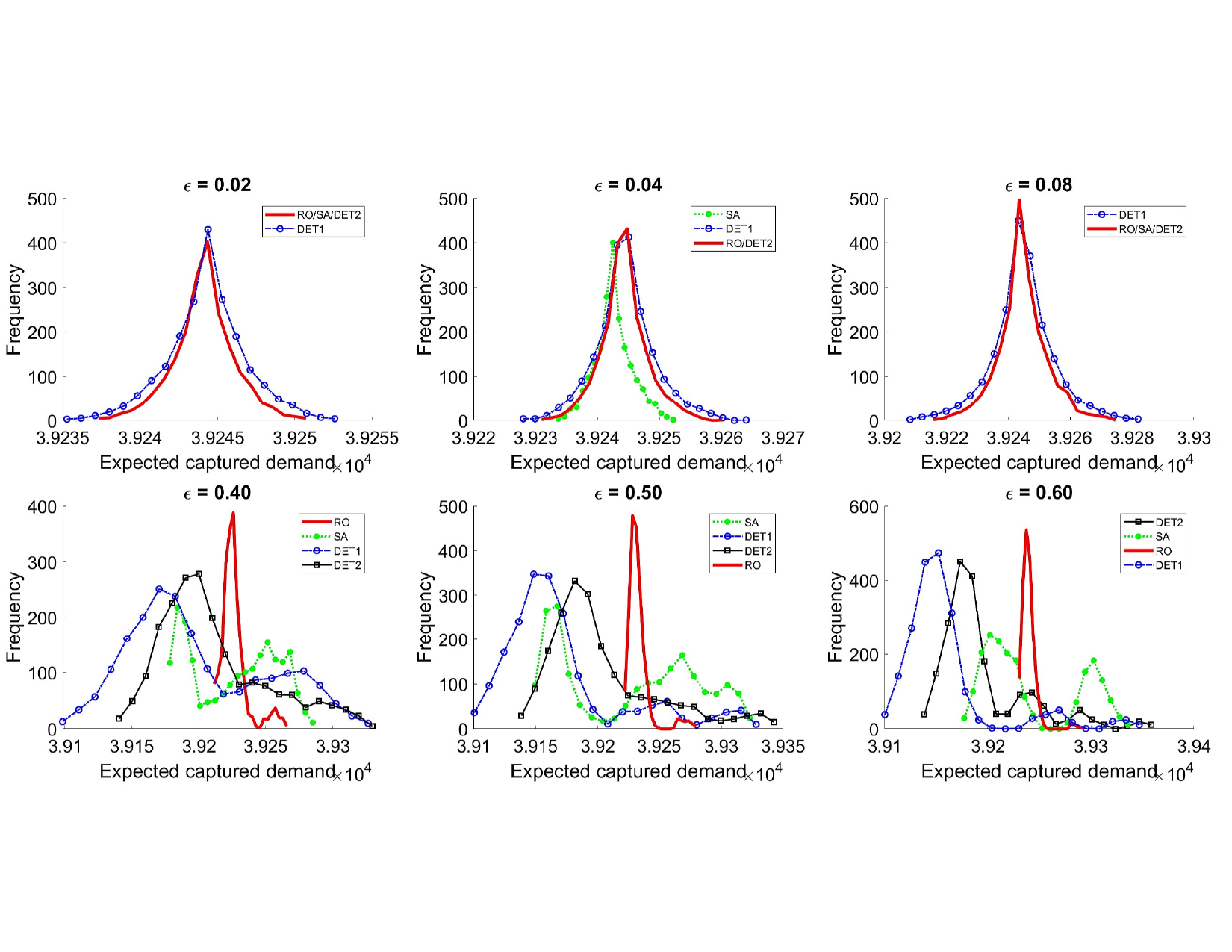}  
  \caption{\textcolor{red}{Comparison between the distributions of the objective values given by solutions from RO, SA, DET1, and DET2 approaches, under the MNL choice model and with instances of size $|I|=1000$ and $m = 100$.}}
  \label{fig:1000-100-mnl}
\end{figure}

\begin{figure}[H]
  \includegraphics[width=\linewidth]{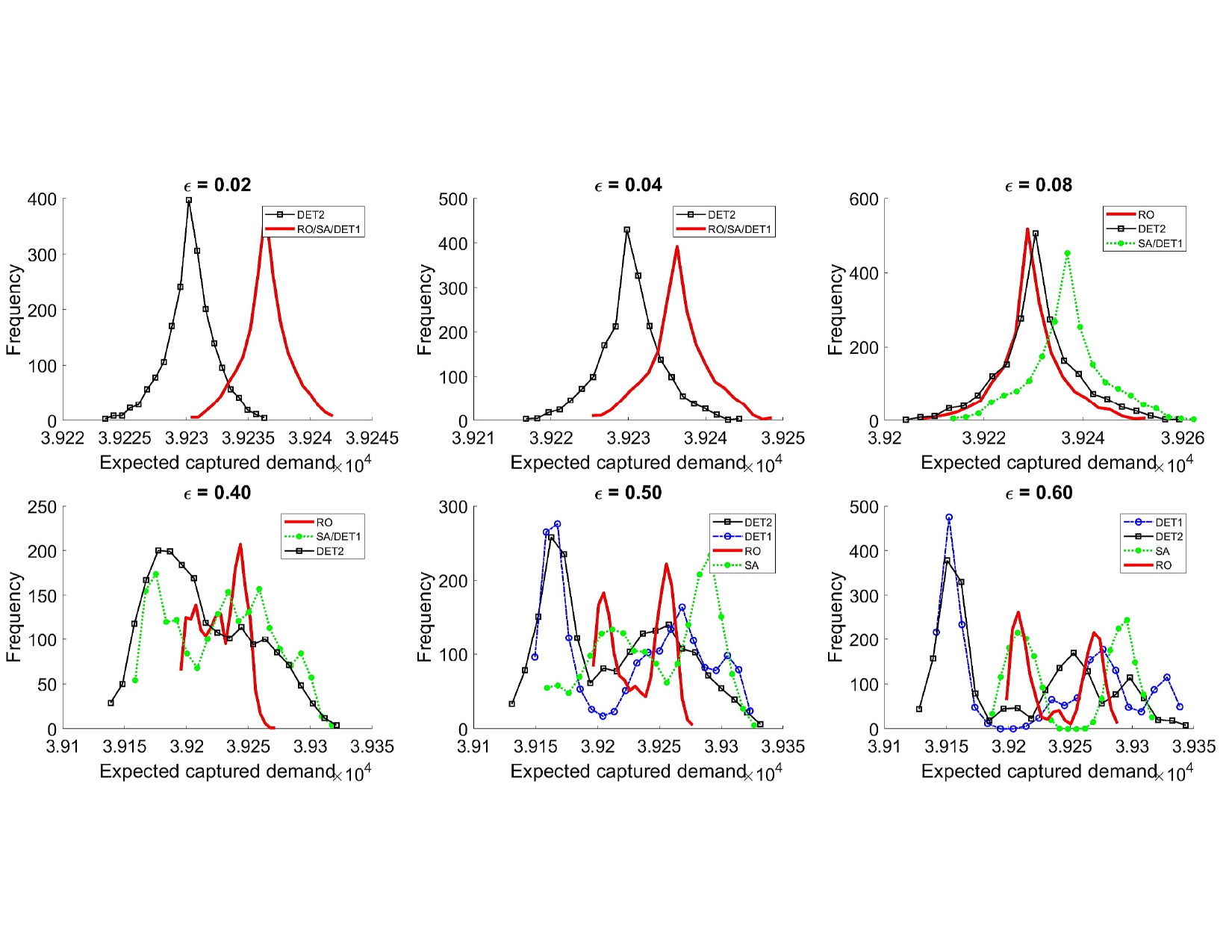}  
  \caption{\textcolor{red}{Comparison between the distributions of the objective values given by solutions from RO, SA, DET1, and DET2 approaches, under the nested logit choice model and with instances of size $|I|=1000$ and $m = 100$.}}
  \label{fig:1000-100-nested}
\end{figure}

\begin{figure}[H]
  \centering
  \includegraphics[width=0.8\linewidth]{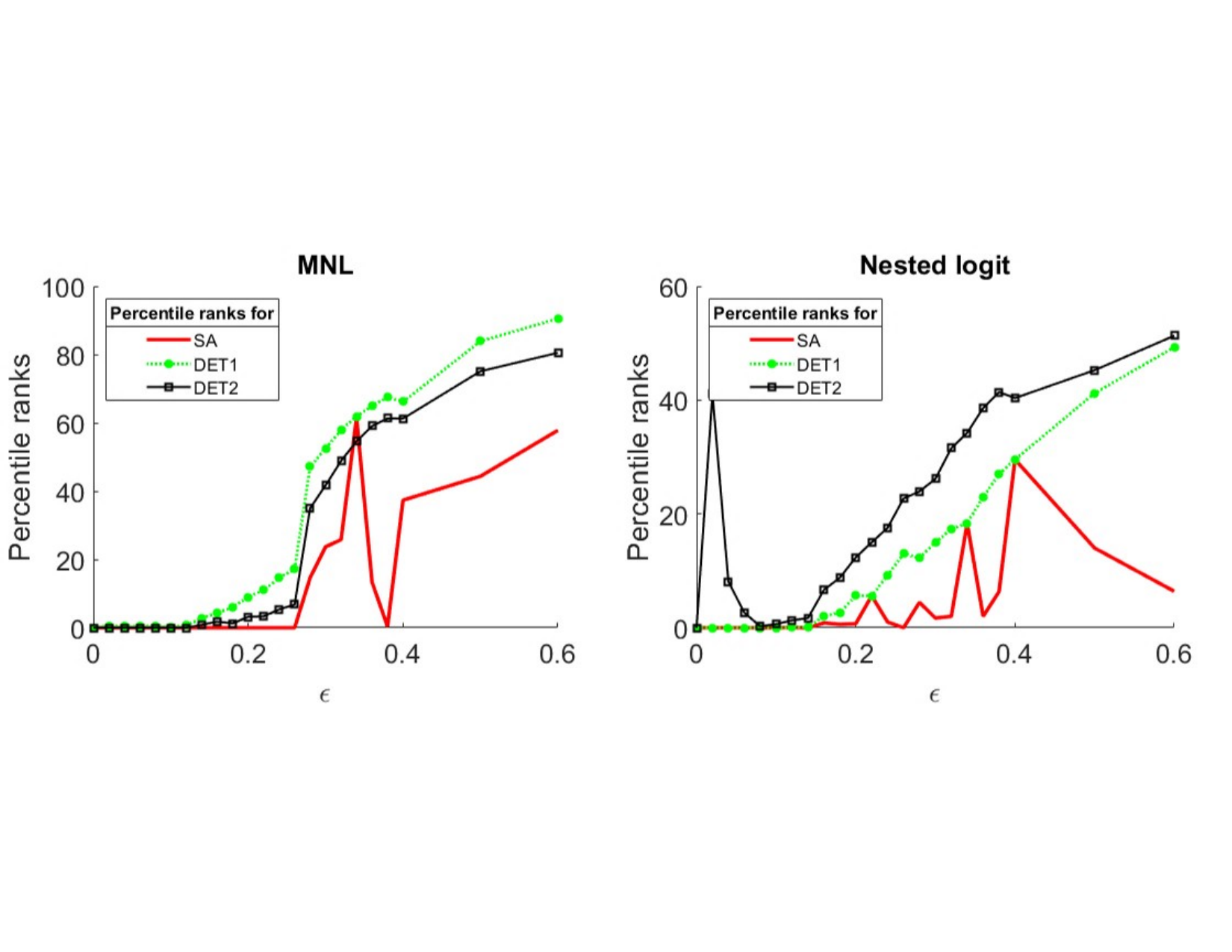}  
  \caption{{The percentile ranks of RO worst value in the distributions given by the SA, DET1, and DET2 solutions under the nested logit choice model for instance with $|I|=1000$ and $m=100$.}}
 \label{fig:1000-100-ranks}
\end{figure}


\paragraph{{Instances of $|I|=82341$ and $m=59$}}
In Figures \ref{fig:82341-59-mnl}, \ref{fig:82341-59-nested}, and  \ref{fig:82341-59-ranks},  we plot similar figures as in the previous sections for large-scale instances of size $|I|=82341$ and $m=59$, which give analogous observations, i.e., the histograms of RO always have lower variances and shorter tails and the percentiles  ranks of the RO's worst-case objective values are always significant, especially when $\epsilon$ becomes large.


\begin{figure}[H]
  \includegraphics[width=\linewidth]{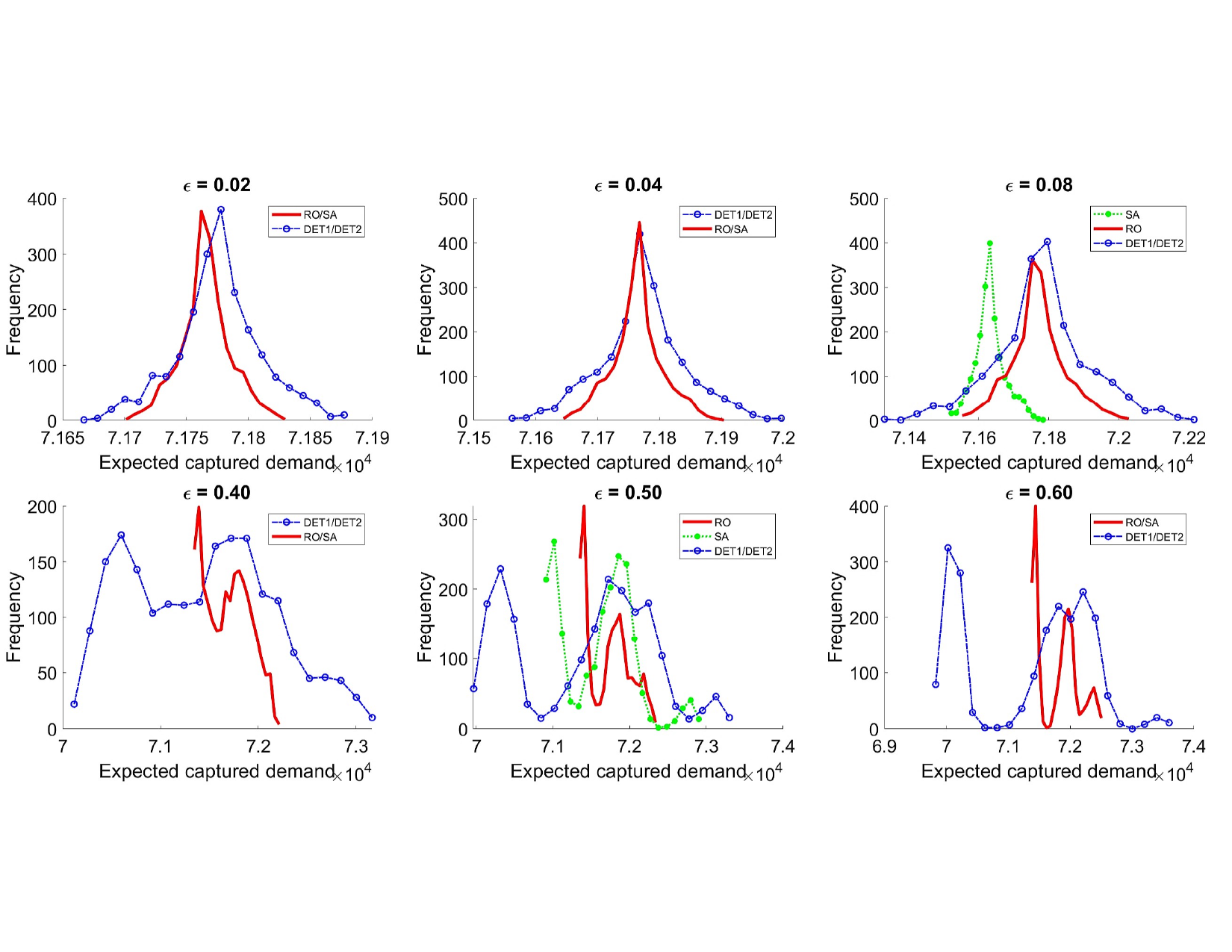}  
  \caption{{Comparison between the distributions of the objective values given by solutions from RO, SA, DET1, and DET2 approaches, under the MNL choice model and with instances of size $|I|=82341$ and $m = 59$.}}
  \label{fig:82341-59-mnl}
\end{figure}

\begin{figure}[H]
  \includegraphics[width=\linewidth]{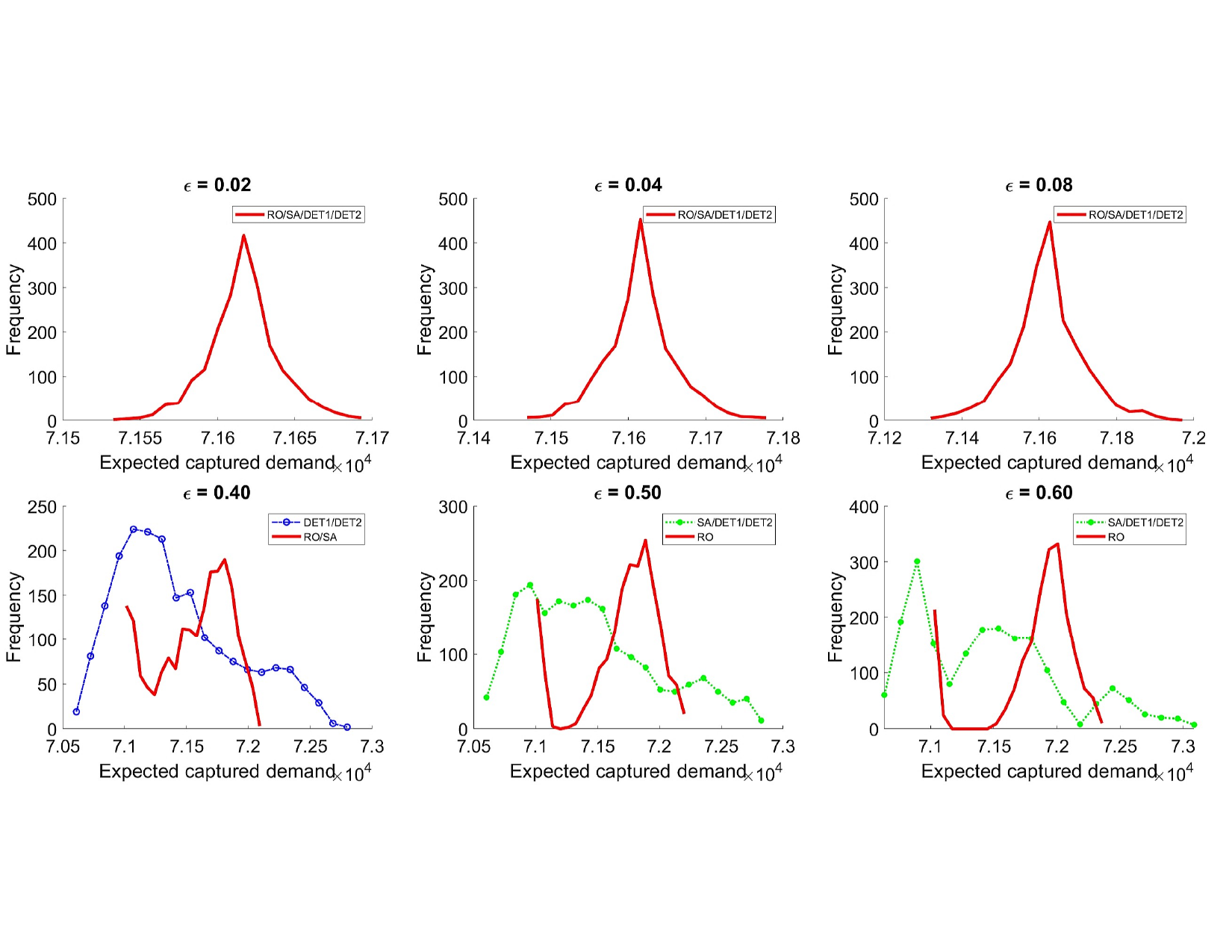}  
  \caption{{Comparison between the distributions of the objective values given by solutions from RO, SA, DET1, and DET2 approaches, under the nested logit choice model and with instances of size $|I|=82341$ and $m = 59$.}}
  \label{fig:82341-59-nested}
\end{figure}

\begin{figure}[H]
  \centering
  \includegraphics[width=0.8\linewidth]{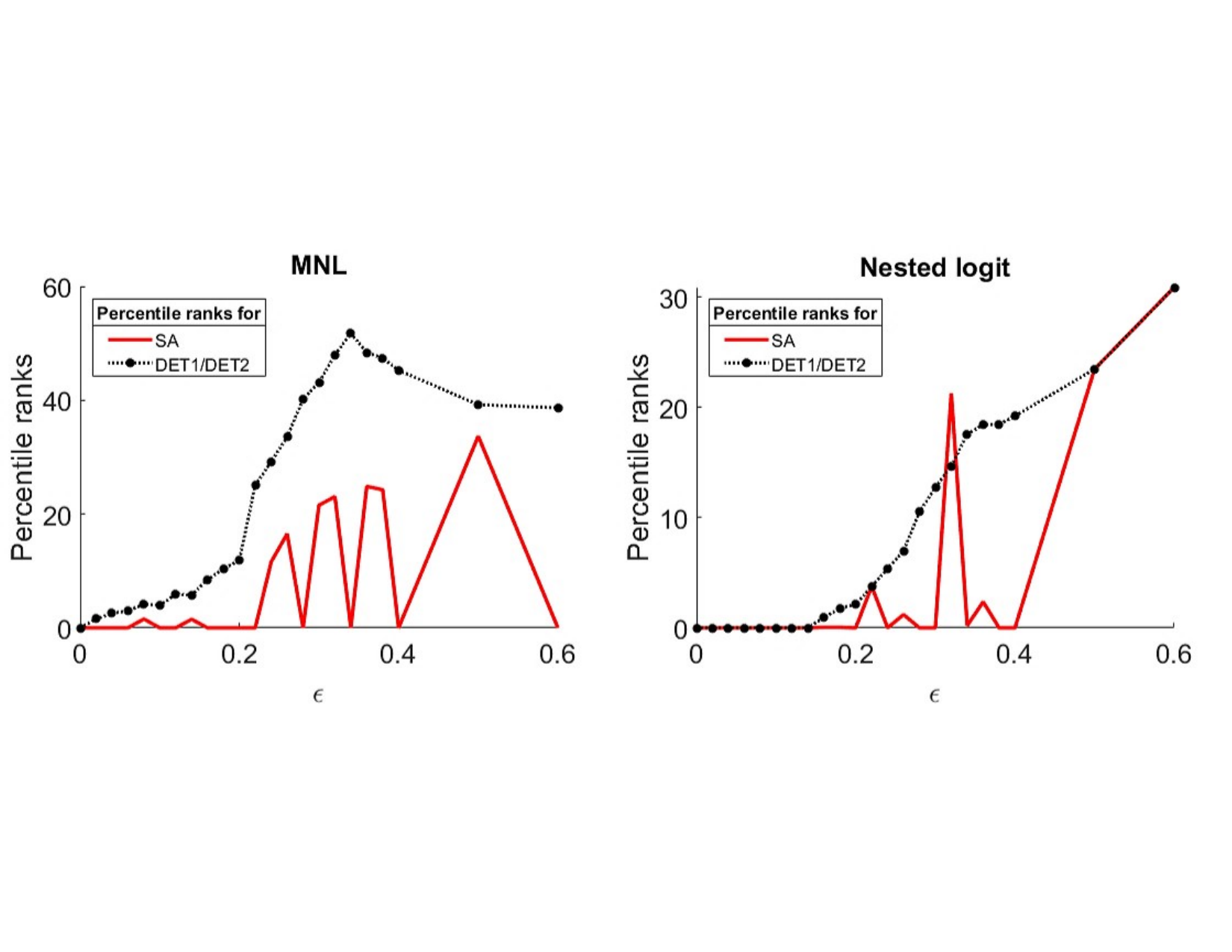}  
  \caption{{The percentile ranks of RO worst value in the distributions given by the SA, DET1, and DET2 solutions under the nested logit choice model for instance with $|I|=82341$ and $m=59$.}}
 \label{fig:82341-59-ranks}
\end{figure}


\end{document}